\newtheorem{theorem}{Theorem}
\newtheorem{lemma}{Lemma}
\newtheorem{assumption}{Assumption}
\newtheorem{proposition}{Proposition}
\newtheorem{corollary}{Corollary}
\theoremstyle{definition}
\newtheorem{definition}{Definition}
\newtheorem{remark}{Remark}
\newtheorem{example}{Example}
\newcommand{\R}{\mathbb{R}}
\newcommand{\E}{\mathbb{E}}
\newcommand{\e}{\varepsilon}
\newcommand{\ba}{\bar{\alpha}}
\newcommand{\cP}{\mathcal P}
\newcommand{\bW}{\mathbb W}
\newcommand{\wdt}{\widetilde}
\newcommand{\tr}{\mathrm{Tr}}
\newcommand{\diag}{\mathrm{diag}}
\newcommand{\comment}[1]{}
\DeclareMathOperator*{\argmin}{arg\,min} 
\title{Ergodic McKean-Vlasov Games: Verification Theorems and Linear-Quadratic Applications}
\author{Qingshuo Song\thanks{Department of Mathematical Sciences, Worcester Polytechnic Institute, \url{qsong@wpi.edu}. }
\and
    Gu Wang\thanks{Department of Mathematical Sciences, Worcester Polytechnic Institute, \url{gwang2@wpi.edu} 
    This author is partially supported by NSF grant DMS-2206282.}
\and
    Zuo Quan Xu\thanks{Department of Applied Mathematics, The Hong Kong Polytechnic University, \url{maxu@polyu.edu.hk}}
  \and
    Chao Zhu\thanks{Department of Mathematical Sciences, University of Wisconsin-Milwaukee, Milwaukee, WI 53201. \url{zhu@uwm.edu}. The research of this author was supported in part by the Simons foundation under grant number 8035009.}
}
\begin{document}
\maketitle
\begin{abstract}
    This paper investigates two-player ergodic nonzero-sum stochastic differential games with McKean-Vlasov dynamics. 
    We establish a verification theorem connecting solutions of coupled Hamilton-Jacobi-Bellman (HJB) Master equations to Nash equilibria, 
    characterized through an auxiliary control problem defined on the measure space. 
    A key contribution is showing that the value functions are uniquely determined (up to an additive constant) by the uniqueness of the invariant measure of the optimal state process. 
    The theory is applied to Linear-Quadratic-Gaussian (LQG) settings, where explicit solutions to the Master equations are derived by exploiting their polynomial structure in measure variables. 
    
    \noindent {\bf Keywords:} Ergodic stochastic games, McKean-Vlasov dynamics, Nash equilibrium, Master equations, linear-quadratic control, algebraic Riccati equations.
    
    \noindent {\bf AMS Subject Classification:} 93E20, 91A15, 91A23.
\end{abstract}

\section{Introduction}\label{sec:intro}

Stochastic control theory and differential games have been extensively studied and applied across diverse fields including economics, engineering, and finance \cite{FR75, YZ99, JN18}.
Two important generalizations are ergodic control problems, which optimize long-term average cost over an infinite horizon \cite{ABG12, JJSY24}, and stochastic differential games \cite{AB07, ED81, GKPP23}.
More recently, mean-field games and mean-field control problems have emerged as powerful frameworks for analyzing systems with large numbers of agents \cite{CD18I, CD18II}.

However, to the best of our knowledge, stochastic differential games 
combining ergodic criteria with mean-field dynamics have not yet been analyzed in the literature.
This paper aims to bridge this gap by investigating two-player ergodic nonzero-sum differential
games under McKean-Vlasov dynamics.
To illustrate the key concepts, we consider a simplified model with a pair of 
controlled diffusion processes $X_t = (X_{1,t}, X_{2,t})$:
\begin{equation}
    \label{eq:X_intro}
        dX_t = \alpha_t dt + d W_t, 
\end{equation}
where $W_t$ is a standard two-dimensional Brownian motion, 
and $\alpha_t = (\alpha_{1,t}, \alpha_{2,t})$ denotes 
the control processes chosen by the two players.
The cost functional for player $i$ ($i=1,2$) is given by
\begin{equation}
    \label{eq:J_intro}
    \hat J_i(\alpha_1, \alpha_2)
    = \lim_{T \to \infty} \frac{1}{T}
    \mathbb E \left[ \int_0^T \left(
        \gamma \mathbb E [ |X_t|^2 ] + (1 - \gamma) |X_{t}|^2
        + r_i |\alpha_{i,t}|^2 + 
        \eta_i \left( \mathbb E[X_{i,t}] \right)^2
    \right) dt \right],
\end{equation}
where $\gamma \in [0,1]$, and $r_i, \eta_i \ge 0$ are given constants.
The objective is to identify a Nash equilibrium
$\alpha^* = (\alpha_1^*, \alpha_2^*)$ satisfying
\begin{equation}
    \label{eq:Nash_intro}
    \hat c_1 := \hat J_1(\alpha_1^*, \alpha_2^*) \le \hat J_1(\alpha_1, \alpha_2^*), \quad
    \hat c_2 := \hat J_2(\alpha_1^*, \alpha_2^*) \le \hat J_2(\alpha_1^*, \alpha_2),
\end{equation}        
for all admissible controls $\alpha_1$ and $\alpha_2$. A rigorous formulation in a more 
general framework is provided in Section \ref{sec:formulation}.

It is worth noting that the cost function in \eqref{eq:J_intro}
incorporates both the state process $X_t$ and its distribution $\mu_t := \mathcal L(X_t)$,
which is a common feature in McKean-Vlasov control problems and mean-field games;
see, e.g., \cite{CD18I,CD18II,CDLL19,Car19} and the references therein.
The presence of distribution-dependent terms in the cost function introduces significant complexity, 
as it necessitates solving for distribution-dependent optimal controls. 
Consequently, the state dynamics for $X_t$ are naturally described by McKean-Vlasov type SDEs,
where the drift and diffusion coefficients depend on both the state and its distribution.

Our approach to solving such a game problem in a general framework 
can be tied up to the study of the connection between the game problem 
and a system of coupled
Hamilton-Jacobi-Bellman (HJB) type Master equations of the form 
\begin{equation}
    \label{eq:master_intro1}
    \int_{\mathbb R^2} 
        F_i^\gamma \left(\mu, x, D \frac{\delta v_i}{\delta \mu}(\mu, x), 
        D^2 \frac{\delta v_i}{\delta \mu}(\mu, x), D \frac{\delta v_{3-i}}{\delta \mu}(\mu, x) 
        \right) 
         \mu(dx) = c_i, \quad i=1,2,
\end{equation}
where $F_i^\gamma$ is a fully nonlinear function, and $\frac{\delta v}{\delta \mu}$ denotes
the flat derivative with respect to the measure argument; see \cite{CDLL19}
for details. The objective of the equation system \eqref{eq:master_intro1} is to find 
the solution quadruple $(v_1, v_2, c_1, c_2)$, where $v_i(\mu)$
is an infinite-dimensional function on a measure space 
and $c_i \in \mathbb R$ is a constant for $i=1,2$.

The major theoretical contribution of this paper is to establish a verification theorem 
that connects the solution of the Master equations \eqref{eq:master_intro1} to the stochastic 
differential game problem in a general framework, which covers 
the above game problem \eqref{eq:X_intro}--\eqref{eq:Nash_intro}.
First, we prove that the constant $c_i$ in the Master equation
coincides with the ergodic constant $\hat c_i$ in the game problem under suitable conditions. 
We further show that the function $v_i$ can be interpreted as the value function 
of an auxiliary control problem, which is defined on the measure space
and is associated with the original game problem.
We emphasize that, unlike in standard control problems, 
$v_i$ cannot be uniquely identified as the value function, 
since the Master equations \eqref{eq:master_intro1} are invariant 
under constant shifts. 
Specifically, if $(v_1, v_2, c_1, c_2)$ is a solution, 
so is $(v_1 + K_1, v_2 + K_2, c_1, c_2)$ 
for any constants $K_1, K_2 \in \mathbb R$.
To resolve this non-uniqueness, we require the uniqueness of the invariant measure
of the optimal state process as an additional condition, which pins down the solution 
to the Master equations \eqref{eq:master_intro1} for a carefully chosen value $K_i$,
which is distinguished from the existing literature.

Another major contribution of this paper is
Section \ref{sec:LQG} to provide explicit solutions 
to the Master equations \eqref{eq:master_intro1} 
in the Linear-Quadratic-Gaussian (LQG) setting,
which includes the game problem \eqref{eq:X_intro}--\eqref{eq:Nash_intro} as a special case.
Indeed, Master equations are generally intractable 
due to their infinite-dimensional nature and fully nonlinear structure.
Our approach to solving the Master equations \eqref{eq:master_intro1} explicitly
relies on the polynomial structure of the cost function $\ell_i$ on measure spaces;
see Section \ref{sec:flatderivative} for the definition of polynomials in measure variables. 
For instance, the running cost of \eqref{eq:J_intro} takes the form 
$\ell_i(\mu_t, X_t, \alpha_{i,t}) = \gamma [\mu]_{I_2} + (1-\gamma) |x|^2 + r_i |\alpha_{i,t}|^2 + \eta_i [\mu]_{e_i}^2$,
where we use the notation
\begin{equation}
    \label{eq:notation_intro}
    [\mu]_{Q} := \int_{\mathbb R^2} x^\top Q x \, \mu(dx), \hbox{ for } Q \in S_2(\mathbb R), 
    \quad
    [\mu]_{q} := \int_{\mathbb R^2} x^\top q \, \mu(dx), \hbox{ for } q \in \mathbb R^2,
\end{equation}
with $S_2(\mathbb R)$ denoting the set of $2 \times 2$ symmetric matrices, 
$I_2$ the identity matrix, and $e_i$ the $i$-th unit vector in $\mathbb R^2$.
Compared with existing approaches in the McKean-Vlasov control literature, 
such as solving 
the coupled forward-backward stochastic differential equations (FBSDEs) or using moment-based 
ansatz; see, e.g., \cite{CD18I, JSY24}, 
our direct approach to solving the Master equations 
\eqref{eq:master_intro1} and characterizing the Nash equilibrium through verification theorems 
is both novel and of independent interest.

We emphasize that this paper develops a general framework for ergodic nonzero-sum differential games 
with McKean-Vlasov dynamics; see Section \ref{sec:formulation} for detailed problem formulation. 
However, the specific model \eqref{eq:X_intro}--\eqref{eq:Nash_intro} and
the examples in Section \ref{sec:LQG} play a crucial role in 
developing the theoretical work in Section \ref{sec:general}, 
providing the intuition and illustrating the key concepts and techniques.
For instance, unlike the aforementioned non-uniqueness of $v_i$, 
a more subtle issue is the non-uniqueness of the constant $c_i$,
which is reflected by the explicit computations in Section \ref{sec:LQG}.
As another example of the insights gained from Section \ref{sec:LQG},
although the game's solution of \eqref{eq:X_intro}--\eqref{eq:Nash_intro} should theoretically 
be independent of the parameter $\gamma$ 
(since the cost functional $\hat J_i$ in \eqref{eq:J_intro} is invariant with respect to $\gamma$), 
our methodology requires solving a system of $\gamma$-dependent Master equations \eqref{eq:master_intro1}.
The explicit computations in Section \ref{sec:LQG} confirm that the resulting solution is indeed 
independent of $\gamma$, thereby validating our approach.
Additional structural insights into the game problem within 
our general framework are discussed as part of our concluding remarks and future research directions
in Section \ref{sec:conclusion}.

The paper is organized as follows.
Section \ref{sec:general} details the general problem formulation, the associated Master equations, and the verification theorem.
Section \ref{sec:LQG} applies this theory to Linear-Quadratic-Gaussian (LQG) settings, 
where the Nash equilibrium is shown to be determined by a system of algebraic Riccati equations.
Finally, Section \ref{sec:conclusion} offers concluding remarks and outlines future research directions.

In addition to the notations introduced in \eqref{eq:notation_intro},
we summarize other notations used throughout the paper.
Given a complete metric space $(\mathcal{S}, d)$ and some $p\ge 1$, 
we denote by $\mathcal{P}_p(\mathcal{S})$ the space of probability measures on 
$\mathcal{S}$ with finite $p$-th moment, equipped with the $p$-Wasserstein distance $\mathbb{W}_p$. 
For matrices $A$ and $B$, $|A|: = \sup_{|x|=1}  | Ax|$ denotes the operator norm of $A$, and $A\otimes B$ denotes the Kronecker product. 
For convenience, we write $A^{\otimes 2} = A\otimes A$. 
If $\mu\in \mathcal P_p(\mathcal S)$ and $f$ is a measurable function on $\mathcal S$, we 
often use the notation $\mu f := \int_{\mathcal S} f(x) \mu(dx)$ whenever the integral is well defined.

\section{Ergodic Non-zero Sum Differential Game in General Setting}
\label{sec:general}

\subsection{Problem Setup}
\label{sec:formulation}
Let $\left(\Omega, \mathcal{F}, \mathbb{P}, (\mathcal{F}_t)_{t \ge 0}\right)$ be a filtered probability space satisfying the usual conditions and 
equipped with a standard two-dimensional Brownian motion $\{W_t\}$.
Consider a controlled diffusion process $X_t\in \mathbb R^2$
governed by the following stochastic differential equation
\begin{equation}
    \label{eq:X}
        dX_t = b\left(\mu_t, X_t, \alpha_t \right) dt 
        + \sigma \left(\mu_t, X_t, \alpha_t \right) d W_t, 
\end{equation}
where
$\mu_{t} =   \mathcal L(X_t) \in \mathcal P_2(\mathbb R^2)$ is 
the law of $X_t$
for every $t\ge 0$, 
$\alpha_t \in \mathbb R^2$ is an $\mathcal F_t$-predictable control process,
and  the initial condition is $\mathcal L(X_0) = \mu_0 \in \mathcal P_2(\mathbb R^2)$.
Furthermore, we impose the following assumption on the  functions $b$ and $\sigma$:
\begin{assumption}
    \label{asm:bsigma}
The functions 
$b:  \mathcal P_2(\mathbb R^2) \times 
\mathbb R^2 \times \mathbb R^2
\to \mathbb R^2$ and 
$\sigma:  \mathcal P_2(\mathbb R^2) \times
\mathbb R^2 \times \mathbb R^2 \to \mathbb R^{2\times 2}$
are Lipschitz continuous, i.e., there exists
a constant $K>0$ such that
\begin{equation}
    \label{eq:b_lip} \begin{aligned}
    |b(\mu_1, x_1, a_1)  - b(\mu_2, x_2, a_2)|^{2} 
    & + |\sigma(\mu_1, x_1, a_1) - \sigma(\mu_2, x_2, a_2)|^{2}
  \\ & \le
    K(\mathbb W^{2}_2(\mu_1, \mu_2) + |x_1-x_2| ^{2}+ |a_1-a_2|^{2}),
\end{aligned}\end{equation}
for all 
$\mu_1, \mu_2 \in \mathcal P_2(\mathbb R^2)$ and $  x_1, x_2, a_1, a_2 \in \mathbb R^2$.
\end{assumption}

We consider a two-player differential game, where player $i$ ($i=1,2$) 
chooses the control $\alpha_{i,t}$ under the Nash equilibrium criterion.
The cost functional of agent $i$ $( = 1,2)$ is given by the long-term average 
of the instantaneous cost function 
$\ell_i: \mathcal P(\mathbb R^2) \times \mathbb R^2 \times \mathbb R \to \mathbb R$, namely 
\begin{equation}
    \label{eq:cost}
    \hat J_i(\alpha) = 
    \lim_{T\to \infty} \frac{1}{T} 
    \mathbb E \left[ \int_0^T 
    \ell_i( \mathcal L(X_t), X_t,  
    \alpha_{i,t}) dt
    \right].
\end{equation}
In the above, the instantaneous cost function 
$\ell_i$ depends on the control $\alpha$ only through its own control component $\alpha_i$.

To set up the Nash equilibrium, we define admissibility of the control process. 
\begin{definition}
    \label{def:admissible}
    We say a random process $\alpha = (\alpha_1, \alpha_2): [0,\infty) \times \Omega \to \mathbb R^2$ 
    is a stationary Markov control if there exists a measurable function $\bar{\alpha}: \mathcal P_2(\mathbb R^2) \times \mathbb R^2 \to \mathbb R^2$ 
    such that SDE \eqref{eq:X} admits a unique square integrable solution $X$ 
    under the closed-loop control $\alpha_t = \bar{\alpha}(\mathcal L(X_t), X_t)$ for all $t\ge 0$.
    Moreover, we say a stationary Markov control $\alpha$ is admissible if
    the associated process $(X_t, \alpha_t)$ converges in law to some distribution 
    $\theta_\infty^\alpha \in \mathcal P_2(\mathbb R^4)$, i.e., 
    \begin{equation}
        \label{eq:law-conv}
        \mathcal L(X_t, \alpha_t) \to \theta_\infty^\alpha \text{ in } \mathbb{W}_2(\mathbb{R}^4) 
        \text{ as } t \to \infty.
    \end{equation} Here and throughout this paper, the convergence of measures  is in the topology induced by the 2-Wasserstein distance.  
    We denote the set of all admissible controls by $\mathcal A$.
\end{definition}

\begin{assumption}
    \label{asm:control_space}
    The set of admissible controls $\mathcal A$ is nonempty.
\end{assumption}

We assume further that $\ell_i, i=1,2$ satisfy the following condition:
\begin{assumption}
    \label{asm:ell}
For each $i=1,2$, the function $\ell_i: \mathcal P_2(\mathbb R^2) \times \mathbb R^2 \times \mathbb R \to \mathbb R$ is locally Lipschitz continuous with at most quadratic growth, i.e., 
there exists a constant $K>0$ such that
\begin{equation}
    \label{eq:ell_lip}
    \begin{aligned}
    |&\ell_i  (\mu,  x, a) - \ell_i(\nu, y, b)|\\
    &  \le  K (1 + \mathbb W_2(\mu, \delta_{0}) + \mathbb W_2(\nu, \delta_0) + |x| + |y| + |a| + |b|) (\mathbb W_2(\mu, \nu) + |x - y| + |a - b|),
    \end{aligned}
\end{equation}
where $\delta_x$ is the Dirac measure at $x \in \mathbb R^2$.
\end{assumption}

Given an admissible control 
$\alpha = (\alpha_1, \alpha_2) \in \mathcal A$,
we also denote
$$\mathcal A_1(\bar \alpha) = 
\left\{
  \beta = (\beta_1, \beta_2) \in \mathcal A:
    \bar{\beta}_2 = \bar \alpha_2
\right\}, \quad
\mathcal A_2(\bar \alpha) =
\left\{
    \beta = (\beta_1, \beta_2) \in \mathcal A:
    \bar{\beta}_1 = \bar \alpha_1
\right\}.$$
The main objective of this paper is to find a pair of control functions
$\alpha^* = (\alpha_1^*, \alpha_2^*) \in \mathcal A$
such that
\begin{equation}
    \label{eq:NE}
    \begin{cases}
        \hat c_1 = 
        \hat J_1(\alpha^*) \le \hat J_1(\alpha), 
        & \forall \alpha \in \mathcal A_1(\alpha^*), 
        \\
        \hat c_2 = \hat J_2(\alpha^*)\le \hat J_2(\alpha),
        & \forall \alpha \in \mathcal A_2(\alpha^*).
    \end{cases}
\end{equation}
If both of the above inequalities hold, we call
the pair of controls 
$(\alpha_1^*, \alpha_2^*)$  a {\em  Nash equilibrium} of
the game, and the corresponding constants 
$(\hat c_1, \hat c_2)$ are called the {\em ergodic constants} 
of the game.

\begin{remark} \label{rem:assumptions}
 We finish this subsection with some remarks concerning the conditions \eqref{eq:b_lip}, \eqref{eq:law-conv}, and  \eqref{eq:ell_lip}. 
 First, \eqref{eq:b_lip} is a standard Lipschitz condition to 
 ensure the well-posedness of the McKean-Vlasov equation \eqref{eq:X} under 
 any admissible control $\alpha$, see e.g. \cite{CD18I}. 
 In fact, by the definition of admissible controls, 
 we have $ \mathbb E[\sup_{t\in [0, T]}|X_t|^2] < \infty$ for any $T> 0$. 
 Since this paper is  focused on long-term average problems, we further require that the 
 control $\alpha$ is such that the law of $(X_t, \alpha_t)$ 
 converges to some $\theta_\infty^\alpha \in \mathcal P_2(\R^4)$ in $\mathbb W_2$ as $t\to \infty$. Lemma \ref{lem:nonempty} below indicates that  such controls exist, i.e., the set $\mathcal A$ of admissible controls is nonempty under additional conditions.  
 
 Second, the running cost function $\ell_i$ is assumed to be locally Lipschitz continuous and satisfies a quadratic growth condition.  These conditions together with the convergence \eqref{eq:law-conv} ensures that the limit on the right hand side of \eqref{eq:cost} exists. Finally, we note that the local Lipschitz  and  quadratic growth condition  are quite common in the literature. The next example shows that these conditions are satisfied by a typical running cost function in LQG control problems.    
 \hfill$\square$
\end{remark} 
\begin{example}
    \label{ex:1}
    Consider the following running cost function in LQG control problems
    to be discussed in more details later in  Section \ref{sec:LQG}:
    $$\ell_i(\mu, x, a) = |x|^2 + r_i |a|^2 + [\mu]_{\theta_i}^2$$
    for some $r_i > 0$ and $\theta_i \in \mathbb R^2$, 
    where $[\mu]_{\theta_i}$ is defined as in \eqref{eq:notation_intro}.  
    The above $\ell_i$ satisfies \eqref{eq:ell_lip} thanks to  the following estimate:
    $$\big||x|^2 - |y|^2\big| \le (|x| + |y|) |x - y|, \ \text{ and } \ \big| |a|^2 - |b|^2\big| \le (|a| + |b|) |a - b|,$$ 
    and for any $q\in \R^2$, 
    \begin{displaymath}\big|[\mu]_q^2 - [\nu]_q^2\big| \le (|[\mu]_q| + |[\nu]_q|) |[\mu]_q - [\nu]_q| \le
    |q|_2^2 (\mathbb W_2(\mu, \delta_0) + \mathbb W_2(\nu, \delta_0)) \mathbb W_2(\mu, \nu).  \tag*{\qed}
   \end{displaymath}
\end{example}

\subsection{Flat Derivatives, Polynomials, and the Chain Rule}
\label{sec:flatderivative}

To proceed, we recall the definition of the flat derivative as introduced in \cite{Car19} (see also \cite[Chapter 2]{CDLL19} and \cite[Chapter 5]{CD18I} for its properties and the relationship with the $L$-derivative). Additionally, we introduce the concept of polynomials on the measure space $\mathcal P_2(\mathbb R^d)$, which plays a crucial role in our analysis of the Master equations \eqref{eq:master_intro1}.

\begin{definition}
    \label{def:flat}
    $($Definition 2.1 of \cite{Car19}$)$
    Given a functional $U: \mathcal P_2(\mathbb R^d)\mapsto \mathbb R$,
    we say that $U \in  \mathcal C^1$ if there exists a jointly continuous and
    bounded function $\frac{\delta U}{\delta \mu}: \mathcal P_2(\mathbb R^d)\times \mathbb R^d \mapsto \mathbb R$ such that 
    $$U(\mu') - U(\mu) = \int_0^1 
    \int_{\mathbb R^d} \frac{\delta U}{\delta \mu}( (1-h) \mu + h\mu', x) d(\mu' - \mu)(x) dh$$ 
    for all $\mu$ and $\mu'$ in $\mathcal P_2(\mathbb R^d)$.
    Moreover, we adopt the normalization convention
    \begin{equation*}
    \int_{\mathbb R^d}\frac{\delta U}{\delta \mu}(\mu, x) \mu(dx) = 0
    \end{equation*}
    for all $\mu\in \mathcal P_2(\mathbb R^d)$.
    
    Additionally, we say that $U$ is  partially-$\mathcal C^2$ 
    if $U\in \mathcal C^1$ and 
    $D_x \frac{\delta U}{\delta \mu}$ 
    and $D_{xx} \frac{\delta U}{\delta \mu}$  
    exist and are jointly continuous and bounded on
    $\mathcal P_2(\mathbb R^d)\times \mathbb R^d$.
    
    We say that $U$ is fully-$\mathcal C^2$ if 
    $\frac{\delta U}{\delta \mu}(\mu, x)$ 
    is $\mathcal C^1$ in 
    $\mu$ with a continuous and bounded derivative, namely 
    $$\frac{\delta^2 U}{\delta \mu^2}: \mathcal{P}_2(\mathbb R^d)\times \mathbb R^d \times \mathbb R^d 
    \mapsto \mathbb R$$ is continuous and bounded.
    \end{definition}

    It is often useful to use $L$-derivative
    for the analysis of the Hamilton-Jacobi-Bellman equations
    in the measure space (see \cite{CD18I}). 
    The connection of the flat derivative and the $L$-derivative 
    is given by Proposition 5.48 of \cite{CD18I}, that is,
    $$\partial_\mu U(\mu, x) = D_x \frac{\delta U}{\delta \mu}(\mu, x).$$

    Now, we are ready to introduce polynomial functions on the measure space $\mathcal P_2(\mathbb R^d)$ inductively.
    \begin{definition}
    \label{def:polynomial}
    A function $U(\mu): \mathcal P_2(\mathbb R^d) \times \mathbb R^d \mapsto \mathbb R$ is called a zero-order polynomial in $\mu$ 
    if it is a constant function in $\mu$.
    A function $U(\mu)$ is called an $n$-th order polynomial in $\mu$
    if its $L$-derivative $\partial_\mu U(\mu,x)$ is 
    a ($x$-dependent) $(n-1)$-th order polynomial in $\mu$.
    The collection of all $n$-th order polynomials is denoted by 
    $P_n$.
    Clearly, $P_n$ is a vector space for each $n\ge 0$ and any $U\in P_n$ 
    is fully-$\mathcal C^2$-differentiable.
    \end{definition}

    For example, with a given function $f\in C^1(\mathbb R^d, \mathbb R)$,
    the function $U(\mu) = \mu f$ is a first-order polynomial since 
    $$\partial_\mu U(\mu,x) = D_x f(x).$$ 
    Similarly, given $f, g \in C^1(\mathbb R^d, \mathbb R)$,
    the function $U(\mu) = (\mu f)(\mu g)$ is a second-order polynomial since 
    $$\partial_\mu U(\mu,x) = (\mu g) D_x f(x) + (\mu f) D_x g(x).$$
    Recall 
    $[\mu]_Q$ and $[\mu]_q$ defined in \eqref{eq:notation_intro} for any $Q \in S_2(\mathbb R)$ and $q \in \mathbb R^2$.
    Then, both $[\mu]_Q^m$ and $[\mu]_q^m$ are $m$-th order polynomials for any integer $m\ge 1$.

    To proceed, we 
    recall the chain rule for the McKean-Vlasov dynamics.
    \begin{lemma}
        \label{lem:chain}
      Consider the  stochastic differential equation  
      \eqref{eq:X}. 
        Suppose that the function $u: \mathcal P_2(\mathbb R^2) \to \mathbb R$ is 
        fully-$\mathcal C^2$-differentiable and satisfies  
        \begin{equation}
            \label{eq:chain_condition}
             \int_{\mathbb R^2}  \left|
             D_{xx}\frac{\delta u}{\delta \mu} 
                \left(\mu, x  \right)  
                \right|^2 d \mu(x)
             < \infty, \ \forall \mu \in \mathcal P_2(\mathbb R^2), 
        \end{equation}
        \begin{equation}
            \label{eq:chain_condition2}
            \mathbb E \left[ 
                \int_{0}^t\big( \left|
                    b
                    \left(\mu_s, X_s, 
                    \alpha_s
                    \right) \right|^2  
                    +
                    \left | \sigma\left(\mu_s, X_s, 
                \alpha_s 
                \right)\right|^4\big) ds
            \right] < \infty, 
        \end{equation}
    then
    we have
    \begin{equation}
        \label{eq:chain}
        \begin{aligned}
            u(\mu_t) &= u(\mu_0) + 
            \int_0^t \mathbb E \left[ 
                D_x \frac{\delta u}{\delta \mu}(\mu_s, X_s) 
                \cdot b(\mu_s, X_s, \alpha_s)
                \right] ds \\
            &\quad + \frac 1 2 \int_0^t \mathbb E \left[ 
                {\rm Tr} 
                \left\{D_{xx}\frac{\delta u}{\delta \mu} (\mu_s, X_s)
                \sigma \sigma^\top(\mu_s, X_s, \alpha_s) \right\} 
            \right] ds.
        \end{aligned}
    \end{equation}
    \end{lemma}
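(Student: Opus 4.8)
The plan is to prove the chain rule from first principles, by combining the defining identity of the flat derivative (Definition \ref{def:flat}) with the classical finite-dimensional It\^o formula applied increment-by-increment along a partition, and then passing to the limit. A preliminary observation that I would record first is that, under Assumption \ref{asm:bsigma} together with the square integrability of $X$, the flow $s\mapsto \mu_s=\mathcal L(X_s)$ is continuous in $\mathbb W_2$ on every compact interval $[0,t]$; consequently its trace $\{\mu_s:s\in[0,t]\}$ is $\mathbb W_2$-compact, a fact I will lean on repeatedly. Fixing $t>0$ and a partition $0=t_0<\cdots<t_n=t$, I telescope $u(\mu_t)-u(\mu_0)=\sum_k\bigl(u(\mu_{t_{k+1}})-u(\mu_{t_k})\bigr)$ and apply to each increment the defining relation of the flat derivative,
\[
u(\mu_{t_{k+1}})-u(\mu_{t_k})=\int_0^1\int_{\mathbb R^2}\frac{\delta u}{\delta\mu}\bigl(\mu_{t_k}^h,x\bigr)\,d(\mu_{t_{k+1}}-\mu_{t_k})(x)\,dh,
\]
where $\mu_{t_k}^h:=(1-h)\mu_{t_k}+h\mu_{t_{k+1}}$.

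The next step is to freeze the measure argument. Replacing $\mu_{t_k}^h$ by $\mu_{t_k}$ produces an error governed by the oscillation of $x\mapsto\frac{\delta u}{\delta\mu}(\cdot,x)$ in its measure variable over distances of order $\mathbb W_2(\mu_{t_{k+1}},\mu_{t_k})$, which is controllable because $u$ is fully-$\mathcal C^2$ and the flow is $\mathbb W_2$-continuous. With the argument frozen I write $\phi_k(x):=\frac{\delta u}{\delta\mu}(\mu_{t_k},x)$; since $u$ is partially-$\mathcal C^2$, $\phi_k$ is $C^2$ with bounded first and second derivatives, so the classical It\^o formula applies to $\phi_k(X_s)$ on $[t_k,t_{k+1}]$. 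Taking expectations, using $\int\phi_k\,d(\mu_{t_{k+1}}-\mu_{t_k})=\mathbb E[\phi_k(X_{t_{k+1}})]-\mathbb E[\phi_k(X_{t_k})]$ and the identity $\partial_\mu u=D_x\frac{\delta u}{\delta\mu}$, each increment becomes
\[
\int_{t_k}^{t_{k+1}}\mathbb E\Bigl[D_x\tfrac{\delta u}{\delta\mu}(\mu_{t_k},X_s)\cdot b(\mu_s,X_s,\alpha_s)+\tfrac12\,\mathrm{Tr}\bigl\{D_{xx}\tfrac{\delta u}{\delta\mu}(\mu_{t_k},X_s)\,\sigma\sigma^\top(\mu_s,X_s,\alpha_s)\bigr\}\Bigr]\,ds,
\]
where the stochastic integral contributes nothing in expectation because $D_x\frac{\delta u}{\delta\mu}$ is bounded and \eqref{eq:chain_condition2} guarantees it is a genuine martingale.

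Summing over $k$ and sending the mesh to zero, the remaining work is to replace the frozen arguments $\mu_{t_k}$ inside $D_x\frac{\delta u}{\delta\mu}$ and $D_{xx}\frac{\delta u}{\delta\mu}$ by $\mu_s$ and to identify the Riemann sums with the two integrals in \eqref{eq:chain}. This is justified by the joint continuity and boundedness of these derivatives (fully-$\mathcal C^2$), the $\mathbb W_2$-continuity of the flow, and dominated convergence: \eqref{eq:chain_condition2} supplies an integrable dominating function for the drift term $|D_x\frac{\delta u}{\delta\mu}|\,|b|$, and a Cauchy--Schwarz bound together with \eqref{eq:chain_condition} and the $|\sigma|^4$-bound in \eqref{eq:chain_condition2} dominates the second-order term $|D_{xx}\frac{\delta u}{\delta\mu}|\,|\sigma|^2$.

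I expect the main obstacle to be the coupled limit in the error terms: one must let the interpolation parameter $h$ and the partition mesh tend to zero simultaneously while keeping the measure-freezing errors uniformly small. The cleanest route is to bound every error by the modulus of continuity of the bounded, jointly continuous derivatives over the $\mathbb W_2$-compact trace $\{\mu_s:s\in[0,t]\}$, multiplied by quantities whose expectations are finite by \eqref{eq:chain_condition2}; uniform continuity on a compact set then drives the errors to zero. If one prefers not to reprove a known result, an alternative is to invoke the It\^o formula along a flow of measures of Carmona--Delarue and merely verify that \eqref{eq:chain_condition}--\eqref{eq:chain_condition2} furnish exactly the integrability hypotheses it requires.
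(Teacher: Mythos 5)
The paper's entire proof of Lemma \ref{lem:chain} is a one-line appeal to \cite[Theorem 5.92]{CD18I}, which is precisely the fallback you offer in your closing sentence; your main proposal, by contrast, reconstructs that chain rule from first principles via telescoping, the defining identity of the flat derivative, and the classical It\^o formula on each subinterval. The comparison is this: the paper's route buys brevity and delegates all the delicate limit analysis to the cited theorem, while your route makes visible exactly where each hypothesis enters --- boundedness of $D_x \frac{\delta u}{\delta \mu}$ for the true-martingale property of the stochastic integral, \eqref{eq:chain_condition2} for domination of the drift and diffusion terms, $\mathbb W_2$-continuity of $s \mapsto \mu_s$ for the Riemann-sum limit. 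Two technical points in your sketch deserve emphasis, because as literally written the middle step does not close. First, the ``freezing'' error per increment, bounded crudely by the oscillation of $\frac{\delta u}{\delta\mu}$ in its measure argument, is of size $\omega(\text{mesh})$; summed over $\sim 1/\text{mesh}$ increments this need not vanish (with diffusive increments $\mathbb W_2(\mu_{t_{k+1}},\mu_{t_k}) \sim \sqrt{\Delta t}$, a Lipschitz modulus gives a divergent total). The repair --- which your final paragraph correctly gestures at --- is to apply It\^o also to the difference $x \mapsto \frac{\delta u}{\delta\mu}(\mu^h_{t_k},x)-\frac{\delta u}{\delta\mu}(\mu_{t_k},x)$, so that each error is $O\bigl(\Delta_k \cdot \omega(\text{mesh})\bigr)$ with $\omega$ now the modulus of continuity of the \emph{derivatives} $D_x\frac{\delta u}{\delta\mu}$, $D_{xx}\frac{\delta u}{\delta\mu}$ in $\mu$, and the total error is $O\bigl(t\,\omega(\text{mesh})\bigr) \to 0$. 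Second, uniform continuity of these derivatives cannot be deduced from compactness of the trace $\{\mu_s: s \in [0,t]\}$ alone, since the $x$-variable ranges over the noncompact $\mathbb R^2$; one must localize in $x$ (using $\sup_{s \le t}\mathbb E|X_s|^2 < \infty$ and tightness, with the tail controlled by boundedness of the derivatives together with \eqref{eq:chain_condition}--\eqref{eq:chain_condition2}). With these two repairs your argument is sound and is essentially how results of the type of \cite[Theorem 5.92]{CD18I} are proved; given that the paper itself assumes that theorem, either your self-contained version (with the above patches) or your citation alternative verifies \eqref{eq:chain}.
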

    \begin{proof}
        The proof is a direct application of 
        \cite[Theorem 5.92]{CD18I}.
    \end{proof}

\subsection{Master equations of HJB type}
\label{sec:HJB}

The first main result of this paper shows that 
    the ergodic constants of \eqref{eq:NE}
    can be characterized by the following 
    HJB equations: 
        \begin{equation}
        \label{eq:HJB}
        \begin{cases}
            \displaystyle
            \int_{\mathbb R^2} \inf_{a_1 \in \mathbb R} 
                \mathcal H_1 \left(\mu, x,  
                D_x \frac{\delta v_1}{\delta \mu}(\mu, x), D_{xx} \frac{\delta v_1}{\delta \mu}(\mu, x), 
                (a_1, \bar{\alpha}_2^*(\mu, x)) \right) \mu(d x)
             = c_1, \\
            \displaystyle
            \int_{\mathbb R^2} \inf_{a_2 \in \mathbb R}
                \mathcal H_2 \left(\mu, x,  
                D_x \frac{\delta v_2}{\delta \mu}(\mu, x), D_{xx} \frac{\delta v_2}{\delta \mu}(\mu, x), 
                (\bar{\alpha}_1^*(\mu, x), a_2) \right) \mu(d x)
             = c_2,\\
            \displaystyle
                \bar{\alpha}_1^*(\mu, x) = 
            \argmin_{a_1 \in \mathbb R} 
            \mathcal H_1 \left(
                \mu, x,  D_x \frac{\delta v_1}{\delta \mu}(\mu, x), D_{xx} \frac{\delta v_1}{\delta \mu}(\mu, x), 
            (a_1, \bar{\alpha}_2^*(\mu, x)) \right), \\
            \displaystyle
                \bar{\alpha}_2^*(\mu, x) = \argmin_{a_2 \in \mathbb R} 
            \mathcal H_2 \left(\mu, x,  D_x \frac{\delta v_2}{\delta \mu}(\mu, x), D_{xx} \frac{\delta v_2}{\delta \mu}(\mu, x), 
            (\bar{\alpha}_1^*(\mu, x), a_2) \right), 
        \end{cases}
    \end{equation} 
    for all $(\mu, x) \in  \mathcal P_2(\mathbb R^2) \times \mathbb R^2$, 
    where $\mathcal H_i:  \mathcal P_2(\mathbb R^2) \times \mathbb R^2 \times \mathbb R^2 \times S_2(\mathbb R) \times \mathbb R^2 \to \mathbb R$ 
    is   the Hamiltonian function given by
    \begin{equation}
        \label{eq:calH}
        \mathcal H_i(\mu, x,  p, Q, a) = 
        p \cdot b(\mu, x,  a) + 
        \frac 1 2 \text{Tr}(Q \sigma \sigma^\top (\mu, x, a)) 
        + \ell_i(\mu, x,  a_i),
    \end{equation} for any $(\mu, x,  p, Q, a) \in  \mathcal P_2(\mathbb R^2) \times \mathbb R^2 \times \mathbb R^2 \times S_2(\mathbb R^{2}) \times \mathbb R^2$.

    \begin{definition} \label{def:1}
        We say that the six-tuple $(v_1, v_2, c_1, c_2, \bar{\alpha}_1^*, \bar{\alpha}_2^*)$ 
        is a solution of \eqref{eq:HJB} if 
        \begin{itemize}
            \item[(i)] $v_1, v_2: \mathcal P_2(\mathbb R^2) \mapsto \mathbb R^2$ are fully-$\mathcal C^2$ functions,
            \item[(ii)] $c_1, c_2 \in \mathbb R$ are constants,  
            \item[(iii)]   $(\bar{\alpha}_1^*, \bar{\alpha}_2^*)^\top: \mathcal P_2(\mathbb R^2) \times \mathbb R^2 \mapsto \mathbb R^2$ 
            is a Borel measurable function, and 
            \item[(iv)]    \eqref{eq:HJB} is 
        satisfied for all 
        $(\mu, x) \in  \mathcal P_2(\mathbb R^2) \times \mathbb R^2$.
        \end{itemize} 
              Note that, if 
        $(v_1, v_2, c_1, c_2, \bar{\alpha}_1^*, \bar{\alpha}_2^*)$ 
        is a solution of \eqref{eq:HJB}, so is 
        $(v_1 + K_1, v_2 + K_2, c_1, c_2, \bar{\alpha}_1^*, \bar{\alpha}_2^*)$
         for any constants $K_1, K_2\in \R$.
        Therefore, the uniqueness of the solution 
        of \eqref{eq:HJB}
        does not hold for an obvious reason.
    \end{definition}

We now present the following verification theorem 
for the Nash equilibrium $\bar{\alpha}^*$ and 
the ergodic constants $\hat c_i$ defined in the ergodic Nash 
game \eqref{eq:NE}.                 

\begin{proposition}
    \label{p:vt}
    Let Assumptions \ref{asm:bsigma}, \ref{asm:control_space}, and \ref{asm:ell} hold.
    Suppose that $(v_1, v_2, c_1, c_2, \bar{\alpha}_1^*, \bar{\alpha}_2^*)$ is a solution 
    to the HJB equations \eqref{eq:HJB} such that the Markov control $\alpha^*$ 
    associated with the feedback form $\bar{\alpha}^*$, i.e., $\alpha_t^* = \bar{\alpha}^*(\mu_t, X_t)$
    for all $t\ge 0$, is admissible with corresponding state process
    $X^*$. Moreover, assume that
    the functions $v_1, v_2$ satisfy the conditions \eqref{eq:chain_condition} 
    and \eqref{eq:chain_condition2}.
    Then, $(\alpha_1^*, \alpha_2^*)$ is a Nash equilibrium for the game \eqref{eq:NE}, 
    and the corresponding ergodic constants are given by $\hat c_i = c_i$ for $i=1,2$.
\end{proposition}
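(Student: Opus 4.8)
The plan is to exploit the chain rule of Lemma \ref{lem:chain}, applied to the functionals $v_i$ along suitable flows of marginal laws, turning the integral identities \eqref{eq:HJB} into a representation of the averaged cost. By the symmetric roles of the two players it suffices to treat player $1$: I will establish both the equality $\hat J_1(\alpha^*) = c_1$ and the one-sided optimality $\hat J_1(\alpha) \ge c_1$ for every deviation $\alpha \in \mathcal A_1(\alpha^*)$, and the corresponding statements for player $2$ follow verbatim with $\mathcal H_2, v_2, c_2, \mathcal A_2(\alpha^*)$ in place of $\mathcal H_1, v_1, c_1, \mathcal A_1(\alpha^*)$.

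First I would run the equality. Let $X^*$ be the equilibrium state process with marginals $\mu_t^* = \mathcal L(X_t^*)$ and $\alpha_t^* = \bar\alpha^*(\mu_t^*, X_t^*)$. Applying \eqref{eq:chain} to $v_1(\mu_t^*)$ and recalling the definition \eqref{eq:calH} of $\mathcal H_1$, the drift and second-order integrand of the chain rule at time $s$ equals $\E[\mathcal H_1(\mu_s^*, X_s^*, D_x\frac{\delta v_1}{\delta \mu}(\mu_s^*,X_s^*), D_{xx}\frac{\delta v_1}{\delta \mu}(\mu_s^*,X_s^*), \alpha_s^*)] - \E[\ell_1(\mu_s^*, X_s^*, \alpha_{1,s}^*)]$. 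Since $X_s^* \sim \mu_s^*$ and $\bar\alpha_1^*$ attains the pointwise infimum in the third line of \eqref{eq:HJB}, the first line of \eqref{eq:HJB} evaluated at $\mu = \mu_s^*$ shows that the first expectation is exactly $c_1$. Substituting gives the affine identity $v_1(\mu_t^*) - v_1(\mu_0) = c_1 t - \int_0^t \E[\ell_1(\mu_s^*, X_s^*, \alpha_{1,s}^*)]\,ds$. Dividing by $t$, sending $t\to\infty$, and using $v_1(\mu_t^*)/t \to 0$ (justified below) together with the existence of the time-average recorded in Remark \ref{rem:assumptions}, I obtain $\hat J_1(\alpha^*) = c_1$.

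Next I would run the inequality. Fix a deviation $\alpha = (\bar\alpha_1, \bar\alpha_2^*) \in \mathcal A_1(\alpha^*)$ with state $X$ and marginals $\mu_t$. The same application of \eqref{eq:chain} to $v_1(\mu_t)$ holds, but now I only have the pointwise bound $\mathcal H_1(\mu, x, \ldots, (a_1, \bar\alpha_2^*(\mu,x))) \ge \inf_{a_1'}\mathcal H_1(\mu, x, \ldots, (a_1', \bar\alpha_2^*(\mu,x)))$ for every $a_1 \in \R$. Taking $a_1 = \bar\alpha_1(\mu_s, x)$, integrating against $\mu_s$, and invoking the first line of \eqref{eq:HJB} at $\mu = \mu_s$ yields $\E[\mathcal H_1(\mu_s, X_s, \ldots, \alpha_s)] \ge c_1$, so the chain rule now produces the inequality $v_1(\mu_t) - v_1(\mu_0) \ge c_1 t - \int_0^t \E[\ell_1(\mu_s, X_s, \alpha_{1,s})]\,ds$. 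Dividing by $t$ and passing to the limit exactly as before gives $\hat J_1(\alpha) \ge c_1 = \hat J_1(\alpha^*)$, which is the first Nash inequality in \eqref{eq:NE}.

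The main obstacle, and the only genuinely analytic point, is the claim $v_1(\mu_t)/t \to 0$ along any admissible flow. Here I would use that $v_1$ is fully-$\mathcal C^2$, hence in particular partially-$\mathcal C^2$, so $D_x\frac{\delta v_1}{\delta \mu}$ is bounded; this makes $\frac{\delta v_1}{\delta \mu}(\mu,\cdot)$ uniformly Lipschitz in $x$ and, through the defining formula in Definition \ref{def:flat} and Kantorovich--Rubinstein duality, forces $v_1$ to be Lipschitz with respect to $\mathbb{W}_1 \le \mathbb{W}_2$. Since admissibility \eqref{eq:law-conv} gives $\mathcal L(X_t, \alpha_t) \to \theta_\infty^\alpha$ in $\mathbb{W}_2$, projecting onto the state coordinates yields $\mu_t \to \mu_\infty$ in $\mathbb{W}_2$, so continuity of $v_1$ makes $v_1(\mu_t)$ convergent, hence bounded, whence $v_1(\mu_t)/t \to 0$. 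The remaining checks are routine: the chain-rule hypotheses \eqref{eq:chain_condition}--\eqref{eq:chain_condition2} are assumed for $v_1, v_2$; the existence and value of $\lim_t \frac1t\int_0^t \E[\ell_1]\,ds$ follow from \eqref{eq:law-conv} and the local-Lipschitz/quadratic-growth bound \eqref{eq:ell_lip} as in Remark \ref{rem:assumptions}; and the interchange of $\inf_{a_1}$ with integration against $\mu_s$ is legitimate because $\bar\alpha_1^*$ is Borel measurable and the deviation $\bar\alpha_1$ is itself an admissible feedback.
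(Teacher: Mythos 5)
Your proposal is correct and follows essentially the same route as the paper's proof: apply the chain rule of Lemma \ref{lem:chain} to $v_1$ along the controlled flow, use the first and third equations of \eqref{eq:HJB} to get $\mathbb E[\mathcal H_1(\cdot)]\ge c_1$ (with equality at $(X^*,\alpha^*)$), then divide by $t$ and pass to the limit using the existence of the time-average from Remark \ref{rem:assumptions}. The only difference is organizational and in rigor at one point: you separate the equality and inequality cases and explicitly justify $v_1(\mu_t)/t\to 0$ (via Lipschitz continuity of $v_1$ in $\mathbb W_1$ and the $\mathbb W_2$-convergence of $\mu_t$ guaranteed by admissibility), a step the paper performs implicitly when dividing \eqref{eq:vt_pf2} by $t$.
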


\begin{proof} 
Let $\alpha^*$ be as in the statement given by the feedback form $\ba^*$.
Hence, for an arbitrary admissible control $\alpha \in \mathcal A_1(\alpha^*)$,
applying Lemma \ref{lem:chain}  with the function $v_1$, and using the Hamiltonian $\mathcal H_i$ defined in \eqref{eq:calH}, 
we obtain with ${\bf X}_s = (\mu_s, X_s)$ that
\begin{equation}\label{eq:vt_pf1}
    \begin{aligned}
        &\mathbb E \left[ v_1(\mu_t) + 
        \int_0^t \ell_1(\mu_s, X_s,  
        \alpha_{1,s})  ds \right]  \\
        &\hspace{.3in} = v_1(\mu_0) + 
        \int_0^t \mathbb E \left[ 
            \mathcal H_1 \left({\bf X}_s,  D_x \frac{\delta v_1}{\delta \mu}({\bf X}_s), D_{xx} \frac{\delta v_1}{\delta \mu}({\bf X}_s), (\alpha_{1,s}, \bar{\alpha}^*_2({\bf X}_s)) \right) \right] ds.
    \end{aligned}
\end{equation}
By the first and the third equations of 
\eqref{eq:HJB}, we have 
$$\mathbb E \left[
    \mathcal H_1 \left({\bf X}_s,  D_x \frac{\delta v_1}{\delta \mu}({\bf X}_s), D_{xx} \frac{\delta v_1}{\delta \mu}({\bf X}_s), (\alpha_{1,s}, \bar{\alpha}^*_2({\bf X}_s)) \right) 
    \right] \ge c_1,$$
for all $s\ge 0$ and the equality holds if 
$(X, \alpha) = (X^*, \alpha^*)$,
where $X^*$ is the state 
process satisfying \eqref{eq:X} corresponding to 
the control  $\alpha^*$.
Therefore, we can rewrite \eqref{eq:vt_pf1} as 
\begin{equation}
    \label{eq:vt_pf2}
    v_1(\mu_t) + \mathbb E \left[  \int_0^t 
    \left( \ell_1(\mu_s, X_s, \alpha_{1,s}) - c_1 \right) ds \right] 
    \ge v_1(\mu_0),
\end{equation}
and the equality holds for
$(X, \alpha) = (X^*, \alpha^*)$.

Dividing both sides of \eqref{eq:vt_pf2} by $t$ and then passing to the limit as $t\to\infty$, 
we obtain the following inequality
\begin{equation}
    \label{eq1:vt_pf3}
    \lim_{t\to \infty} \frac 1 t 
    \mathbb E \left[  \int_0^t \left( \ell_1(\mu_s, X_s, \alpha_{1,s}) - c_1 \right) ds \right] \ge 0,
\end{equation}
and the equality holds for
$(X, \alpha) = (X^*, \alpha^*)$ which 
inherits  from \eqref{eq:vt_pf2}. 
As we observed in Remark \ref{rem:assumptions}, the limit on the 
left-hand side of \eqref{eq1:vt_pf3} exists for any admissible control 
$\alpha \in \mathcal A$.
Therefore, we have 
$$c_1 \le \lim_{t\to \infty} \frac 1 t 
\mathbb E \left[  \int_0^t \ell_1(\mu_s, X_s, \alpha_{1,s}) ds \right] 
= \hat J_1(\alpha),$$
for all $\alpha = (\alpha_1, \alpha_2^*) \in \mathcal A_1(\alpha^*)$.
This implies that
\begin{equation}
    \label{eq1:vt_pf4}
    c_1 = \hat J_1(\alpha^*) = 
    \inf_{\alpha \in \mathcal A_1(\alpha^*)} 
    \hat J_1(\alpha). 
\end{equation}
Similarly, we can prove that
\begin{equation}
    \label{eq:vt_pf5}
    c_2 = \hat J_2(\alpha^*) = 
    \inf_{\alpha \in \mathcal A_2(\alpha^*)} \hat J_2(\alpha). 
\end{equation}
From \eqref{eq1:vt_pf4} and \eqref{eq:vt_pf5},
we have $\hat c_i = c_i = \hat J_i(\alpha^*)$
for $i = 1, 2$.
\end{proof}

\begin{remark}
    The admissibility requirement for the control $\alpha^*$, 
    specifically the convergence of the law $\mathcal{L}(X_t^*, \alpha_t^*)$ in $\mathbb{W}_2$ as $t \to \infty$,
    plays a critical role in identifying the ergodic constants $\hat c_i$ with the constants $c_i$ appearing in \eqref{eq:HJB}.
    This condition is essential because the constants $c_i$ are not uniquely determined by the HJB equations \eqref{eq:HJB} alone.
    We refer the reader to \eqref{eq:c1c2} and the rest of Section \ref{sec:LQG} for an explicit example demonstrating the non-uniqueness of $c_i$. \hfill$\square$
\end{remark}

Proposition \ref{p:vt} gives a partial verification in that 
it does not provide a clear implication of the 
functions
$(v_1, v_2)$ from the solution of the HJB equations \eqref{eq:HJB}.
The lack of this verification of $(v_1, v_2)$ 
is due to the non-uniqueness of the solution of the HJB equations \eqref{eq:HJB}.
In the following, we establish the verification of $(v_1, v_2)$
by showing that they are a constant shift
of the value function of an auxiliary control problem under some additional conditions.

\subsection{The complete verification: Auxiliary control problem}
\label{sec:verification}
We have shown  in Proposition \ref{p:vt} that
 if  $(v_1, v_2, c_1, c_2, \bar{\alpha}_1^*, \bar{\alpha}_2^*)$ is a solution  
 of HJB equations \eqref{eq:HJB}, 
 then the feedback control $\alpha^*$ generated by $\bar{\alpha}^* = (\bar{\alpha}_1^*, \bar{\alpha}_2^*)$ 
provides a Nash equilibrium of \eqref{eq:NE}, and $(c_1, c_2) $ are the corresponding ergodic constants.
A natural question is:  
 What are the functions $v_1$ and $v_2$ in the context of the    
   control problem? 
To answer this question, we consider the following auxiliary control problem.

Recall that $\hat c_i, i=1,2$ are the ergodic constants associated with 
Nash equilibrium controls defined in \eqref{eq:NE}.
Consider, for any $(\mu_0, \alpha)$ in $\mathcal{P}(\mathbb{R}^2) \times \mathcal{A}$
\begin{equation} \label{eq:J}
    J_i(\mu_0, \alpha) = \lim_{T \to \infty} \mathbb{E} 
    \left[ \int_0^T \left( \ell_i(\mathcal L(X_t), X_t, \alpha_{i,t}) - \hat c_i \right) dt \right], \quad i = 1, 2.
\end{equation}
The objective of this problem is to find a 
Nash equilibrium $\alpha^* \in \mathcal A$ satisfying
\begin{equation}
    \label{eq:NE2}
    \begin{cases}
        V_1(\mu_0) := J_1(\mu_0, \alpha^*) 
        \le J_1(\mu_0, \alpha), 
        & \forall \alpha \in 
        \mathcal A_1(\alpha^*), \medskip\\
        V_2(\mu_0) := J_2(\mu_0, \alpha^*) 
        \le J_2(\mu_0, \alpha), 
        & \forall \alpha \in \mathcal A_2(\alpha^*).
    \end{cases}
\end{equation} 

To facilitate later presentation, we define the  projection maps:
$$\pi^x(x_1, x_2, a_1, a_2) = (x_1, x_2), \
\pi^i(x_1, x_2, a_1, a_2) = a_i, \ i =1, 2, \ \  \forall (x_1, x_2, a_1, a_2) \in \mathbb R^4,$$ 
as well as the extension of the running cost 
$\ell_i: \mathcal P_2(\mathbb R^2) \times \mathbb R^2 \times \mathbb R \to \mathbb R$ 
to $\hat \ell_i: \mathcal P_2(\mathbb R^4) \times \mathbb R^4 \to \mathbb R$ defined
by
$$\hat \ell_i(\theta, x, a) = \ell_i(\pi^x_\# \theta, x, a_i), \ \ \forall (\theta, x, a) \in \mathcal P_2(\mathbb R^4) \times \mathbb R^4, \ i =1, 2,$$
where $\pi^x_\# \theta$ is the push-forward measure of $\theta$ by the projection map $\pi^x$. 
Then, we can rewrite the objective functions as a function of 
the joint distribution of the state and the control, that is,
\begin{equation} \label{eq:ell_bar}
    \bar \ell_i(\theta) = 
    \int_{\mathbb R^4} 
    \hat \ell_i(\theta, x, a) \theta(dx, da),\ \ i =1,2, 
\end{equation}
Note that for an $\mathbb R^2 \times \mathbb R \times \mathbb R$-valued random variable
$(X, A_1, A_2)$ 
with law $\theta \in \mathcal P_2(\mathbb R^4)$,
we can write $$ \mathbb E[\ell_i(\mathcal L(X), X, A_i)] = \bar \ell_i(\theta), \ \ i =1, 2.$$

In particular, with the above notation, and noting that 
$\E[\ell_i(\mathcal L(X_t), X_t, \alpha_t)] 
\to \bar\ell_i(\theta_\infty^\alpha)$, $i=1,2$ 
as $t\to\infty$ for any $\alpha \in \mathcal A$, where $\theta_\infty^{\alpha}$ 
is the limit distribution of 
$(X_t, \alpha_t)$ as in \eqref{eq:law-conv}, we can rewrite the equilibrium condition \eqref{eq:NE} 
as follows:
\begin{equation}
    \label{eq:vc1}
    \begin{cases}
        \hat c_1 = \bar \ell_1(\theta_\infty^*) 
        = \inf_{\alpha \in 
        \mathcal A_1(\alpha^*)} \bar \ell_1(\theta_\infty^{\alpha}), \medskip\\
        \hat c_2 = \bar \ell_2(\theta_\infty^*) = 
        \inf_{\alpha \in \mathcal A_2(\alpha^*)} \bar \ell_2(\theta_\infty^{\alpha}),
    \end{cases}
\end{equation}
 where 
$\theta_\infty^*$ is the limit distribution 
of $(X^*_t, \alpha^*_t)$ 
corresponding to $\alpha^* \in \mathcal A$. 
Note that $\bar\ell_i(\theta_\infty^\alpha)$ and  $ \bar \ell_i(\theta_\infty^*)$,
for $i=1,2$, are finite since $\ell_i$ has quadratic growth and $\theta_\infty^\alpha, \theta_\infty^* \in \cP_2(\R^4)$.

Recall that $\ell_i$ is assumed to be locally Lipschitz continuous with quadratic growth
in Assumption \ref{asm:ell}. Indeed, we can show that $\bar \ell_i$ introduced above is also locally Lipschitz continuous under the same condition.

\begin{assumption}
    \label{asm:vi}
    The solution $(v_1, v_2, c_1, c_2, \bar{\alpha}_1^*, \bar{\alpha}_2^*)$ of the HJB equations \eqref{eq:HJB}
    satisfies the following conditions:
    $D_x \frac{\delta v_i}{\delta \mu}$ is Lipschitz continuous and 
            $D_{xx} \frac{\delta v_i}{\delta \mu}$ is continuous and bounded. 
\end{assumption}
\begin{lemma}
    \label{lem:ell} 
    \begin{enumerate}
        \item[(i)] For a function $f: \mathcal P_2(\mathbb R^4) \times \mathbb R^4 \to \mathbb R$,
        we define 
        $$\hat f(\theta) = \int_{\mathbb R^4} f(\theta, y) \theta(dy).$$
        If $f$ is locally Lipschitz continuous and has quadratic growth, i.e., 
        $$|f(\theta, y) - f(\gamma, z)| \le K (1 + \mathbb W_2(\theta, \delta_0) 
        + \mathbb W_2(\gamma, \delta_0) + |y| + |z|) (\mathbb W_2(\theta, \gamma) + |y-z|),$$
        for some constant $K > 0$ and for all $\theta, \gamma \in \mathcal P_2(\mathbb R^4)$ and $y, z \in \mathbb R^4$. 
        Then, $\hat f$ is locally Lipschitz continuous with respect to the 2-Wasserstein distance. 
        In particular, 
        if $\theta_t \to \theta_\infty$ in $\mathbb W_2$, then $\hat f(\theta_t) \to \hat f(\theta_\infty)$ as $t\to \infty$.
        \item[(ii)] In particular, under Assumption \ref{asm:ell}, $\bar \ell_i$ is locally Lipschitz continuous with respect to the 2-Wasserstein distance for $i=1,2$.
         
        \item[(iii)] For any $\alpha \in \mathcal A$, define 
        $\bar{\mathcal H}_{i,t}^\alpha$ as
            \begin{align*}
                \bar{\mathcal H}_{i,t}^\alpha & = \mathbb E \bigg[
                    \mathcal H_i \bigg(\mu_t, X_t,  
                     D_x \frac{\delta v_i}{\delta \mu}(\mu_t, X_t), D_{xx} \frac{\delta v_i}{\delta \mu}(\mu_t, X_t), 
                    \alpha_t \bigg) \bigg],
            \end{align*} where $\mathcal H_i$ is the Hamiltonian function defined in \eqref{eq:calH}, $X_t$ is the controlled state process under $\alpha\in \mathcal A$, and $\mu_t$ is the distribution of $X_t$.
            If Assumptions \ref{asm:bsigma}, \ref{asm:ell}, and  \ref{asm:vi} hold, 
            then $\bar{\mathcal H}_{i,t}^\alpha$ is convergent as $t\to \infty$ 
            to some constant $\bar{\mathcal H}_{i, \infty}^\alpha$.
    \end{enumerate}
\end{lemma}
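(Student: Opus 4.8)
The plan is to prove the three parts in order, using part (i) as the workhorse estimate for parts (ii) and (iii).

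For part (i), I would fix $\theta,\gamma\in\mathcal P_2(\mathbb R^4)$ and let $\pi$ be an optimal coupling realizing $\mathbb W_2(\theta,\gamma)$, so that $\hat f(\theta)-\hat f(\gamma)=\int (f(\theta,y)-f(\gamma,z))\,\pi(dy,dz)$. Inserting the assumed local Lipschitz bound, the integrand factors into a growth factor $1+\mathbb W_2(\theta,\delta_0)+\mathbb W_2(\gamma,\delta_0)+|y|+|z|$ times a distance factor $\mathbb W_2(\theta,\gamma)+|y-z|$. Applying Cauchy--Schwarz with respect to $\pi$, the distance factor integrates to at most $2\mathbb W_2(\theta,\gamma)$ (using $\int|y-z|^2\,d\pi=\mathbb W_2^2(\theta,\gamma)$), while the growth factor integrates to a constant controlled by $1+\mathbb W_2(\theta,\delta_0)+\mathbb W_2(\gamma,\delta_0)$ (using that the second moment of $\theta$ equals $\mathbb W_2^2(\theta,\delta_0)$). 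This yields $|\hat f(\theta)-\hat f(\gamma)|\le C(1+\mathbb W_2(\theta,\delta_0)+\mathbb W_2(\gamma,\delta_0))\,\mathbb W_2(\theta,\gamma)$, i.e. local Lipschitz continuity; quadratic growth of $f$ guarantees $\hat f$ is finite. The convergence statement then follows because a $\mathbb W_2$-convergent sequence $\theta_t\to\theta_\infty$ is $\mathbb W_2$-bounded, so the local Lipschitz constant is uniform along the sequence.

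For part (ii), I would apply part (i) to $f=\hat\ell_i$, where $\hat\ell_i(\theta,(x,a))=\ell_i(\pi^x_\#\theta,x,a_i)$, so that $\bar\ell_i(\theta)=\int\hat\ell_i(\theta,y)\,\theta(dy)$ is exactly the operation $\hat f$ of part (i). It remains to check that $\hat\ell_i$ inherits the local Lipschitz and quadratic growth hypothesis from Assumption \ref{asm:ell}. This is immediate once one observes that the projection $\pi^x$ is $1$-Lipschitz, hence $\mathbb W_2(\pi^x_\#\theta,\pi^x_\#\gamma)\le\mathbb W_2(\theta,\gamma)$ and $\mathbb W_2(\pi^x_\#\theta,\delta_0)\le\mathbb W_2(\theta,\delta_0)$, together with the pointwise bounds $|x|,|a_i|\le|(x,a)|$ and $|x-x'|,|a_i-a_i'|\le|(x,a)-(x',a')|$.

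For part (iii), I would decompose $\bar{\mathcal H}_{i,t}^\alpha$ into the three expectations arising from the summands of $\mathcal H_i$ in \eqref{eq:calH}: the drift term $\mathbb E[D_x\frac{\delta v_i}{\delta\mu}(\mu_t,X_t)\cdot b(\mu_t,X_t,\alpha_t)]$, the diffusion term $\frac12\mathbb E[\mathrm{Tr}(D_{xx}\frac{\delta v_i}{\delta\mu}(\mu_t,X_t)\,\sigma\sigma^\top(\mu_t,X_t,\alpha_t))]$, and the running cost term $\mathbb E[\ell_i(\mu_t,X_t,\alpha_{i,t})]=\bar\ell_i(\theta_t)$, where $\mu_t=\pi^x_\#\theta_t$ and $\theta_t=\mathcal L(X_t,\alpha_t)\to\theta_\infty^\alpha$ in $\mathbb W_2$ by admissibility \eqref{eq:law-conv}. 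The running cost term converges by part (ii). The drift term equals $\hat g(\theta_t)$ for $g(\theta,(x,a))=D_x\frac{\delta v_i}{\delta\mu}(\pi^x_\#\theta,x)\cdot b(\pi^x_\#\theta,x,a)$; since $D_x\frac{\delta v_i}{\delta\mu}$ is Lipschitz (hence of at most linear growth) by Assumption \ref{asm:vi} and $b$ is Lipschitz by Assumption \ref{asm:bsigma}, the product is locally Lipschitz with quadratic growth, so part (i) again delivers convergence.

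The main obstacle is the diffusion term, because Assumption \ref{asm:vi} only grants that $Q:=D_{xx}\frac{\delta v_i}{\delta\mu}$ is continuous and bounded---not Lipschitz---so part (i) does not apply. To treat it I would set $F_t(x,a)=\mathrm{Tr}(Q(\mu_t,x)\,\sigma\sigma^\top(\mu_t,x,a))$, let $F_\infty$ be the analogue with $\mu_\infty:=\pi^x_\#\theta_\infty^\alpha$, and write $\mathbb E_{\theta_t}[F_t]=\mathbb E_{\theta_t}[F_\infty]+\mathbb E_{\theta_t}[F_t-F_\infty]$. The first piece converges to $\mathbb E_{\theta_\infty}[F_\infty]$ since $F_\infty$ is a fixed continuous function of at most quadratic growth and $\theta_t\to\theta_\infty$ in $\mathbb W_2$ integrates all such test functions. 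For the second piece I would split $F_t-F_\infty=\mathrm{Tr}(Q(\mu_t,\cdot)[\sigma\sigma^\top(\mu_t)-\sigma\sigma^\top(\mu_\infty)])+\mathrm{Tr}([Q(\mu_t,\cdot)-Q(\mu_\infty,\cdot)]\sigma\sigma^\top(\mu_\infty))$: the first summand is controlled by boundedness of $Q$ and the $\mathbb W_2$-Lipschitz bound \eqref{eq:b_lip} for $\sigma$, producing a factor $\mathbb W_2(\mu_t,\mu_\infty)\to0$ against a uniformly bounded first-moment weight. The delicate summand is the second, where $Q(\mu_t,\cdot)-Q(\mu_\infty,\cdot)$ is only known to vanish uniformly on compacts (via joint continuity of $Q$ and relative $\mathbb W_2$-compactness of $\{\mu_t\}\cup\{\mu_\infty\}$) while multiplied by the quadratically growing $\sigma\sigma^\top(\mu_\infty,\cdot)$. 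I expect this to be the crux, and I would resolve it by a generalized dominated convergence argument: split the integral over $\{|(x,a)|\le R\}$ and its complement, bound the tail uniformly in $t$ using the uniform integrability of $|(x,a)|^2$ under $\{\theta_t\}$ that $\mathbb W_2$-convergence guarantees, and send $t\to\infty$ and then $R\to\infty$.
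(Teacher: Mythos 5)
Your proof is correct. For parts (i) and (ii) it is essentially the paper's own argument: the paper realizes $\mathbb W_2(\theta,\gamma)$ through a sequence of couplings $(Y_n,Z_n)$ and applies Cauchy--Schwarz on a $\mathbb W_2$-ball, which is your optimal-coupling computation in all but notation, and its treatment of (ii) via the Lipschitz projections $\pi^x,\pi^i$ is the same as yours.

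Part (iii) is where you genuinely diverge, and your route is the more careful one. The paper's proof lumps the three summands of $\mathcal H_i$ in \eqref{eq:calH} into a single function $\hat{\mathcal H}_i$, asserts that Assumptions \ref{asm:bsigma}, \ref{asm:ell} and \ref{asm:vi} make $\hat{\mathcal H}_i$ locally Lipschitz with quadratic growth, and then reuses parts (i)--(ii). But, exactly as you flag, Assumption \ref{asm:vi} only makes $D_{xx}\frac{\delta v_i}{\delta\mu}$ continuous and bounded, not Lipschitz in $\mu$; hence the trace term $\mathrm{Tr}\big(D_{xx}\frac{\delta v_i}{\delta\mu}\,\sigma\sigma^\top\big)$ is a bounded-continuous factor times a locally Lipschitz, quadratically growing one, and such a product need not satisfy the local Lipschitz hypothesis of part (i). So the paper's one-line reduction is not justified by its stated assumptions (it is harmless in the LQG applications, where $D_{xx}\frac{\delta v_i}{\delta\mu}$ is a constant matrix, but not in general). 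Your decomposition treats the running-cost term by part (ii) and the drift term by part (i) --- legitimately, since there both factors are genuinely Lipschitz with linear growth --- and replaces the missing estimate for the diffusion term by a soft argument that uses only what $\mathbb W_2$-convergence provides: convergence of integrals of continuous quadratic-growth test functions (for the $\mathbb E_{\theta_t}[F_\infty]$ piece), uniform convergence of $Q(\mu_t,\cdot)\to Q(\mu_\infty,\cdot)$ on compacts (joint continuity plus compactness of the convergent sequence $\{\mu_t\}\cup\{\mu_\infty\}$ in $\mathcal P_2$), and uniform integrability of $|y|^2$ under $\{\theta_t\}$ for the tails. Each of these steps is valid, and your limit identifies the constant $\bar{\mathcal H}^\alpha_{i,\infty}$ explicitly as the integral of the limiting Hamiltonian against $\theta_\infty^\alpha$, which is what the paper uses downstream in Theorem \ref{thm:vt1}. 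In short: the paper buys brevity at the cost of an unverified Lipschitz claim; your argument costs an $R$-truncation and a compactness step but closes that gap using nothing beyond the stated hypotheses.
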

\begin{proof}
    \begin{enumerate}
        \item[(i)] For any  $R > 0$, 
     define the ball 
     $B_R^{\mathbb W_2} = \{\theta \in \mathcal P_2(\mathbb R^4): \mathbb W_2(\theta, \delta_{\mathbf 0}) < R\}$, 
    where $\delta_{\mathbf 0}$ is the Dirac measure at $\mathbf 0 \in \mathbb R^4$.
    For any $\theta, \gamma \in B_R^{\mathbb W_2}$, 
    we will show that 
    \begin{equation}
        \label{eq:ell_lip1}
        |\hat f(\theta) - \hat f(\gamma)| \le C_R \mathbb W_2(\theta, \gamma),
    \end{equation}
    for some constant $C_R > 0$ depending only  on $R$.

    We denote $d = \mathbb W_2(\theta, \gamma)$.
    By the definition of Wasserstein distance,
    there exist random variables 
    $Y_n\sim \theta$ and $Z_n \sim \gamma$ such that 
    $$d^2 = \lim_{n\to \infty} \mathbb E[|Y_n - Z_n|^2].$$
    Then, we have 
    $$\hat f(\theta) = \mathbb E[f(\theta, Y_n)], \ \ \hat f(\gamma) = \mathbb E[f(\gamma, Z_n)], 
    \ \ \forall n\ge 1.$$
    Hence, we can write
    \begin{align*}
        |\hat f(\theta) - \hat f(\gamma)| 
        & = \lim_{n\to \infty} 
        |\mathbb E[f(\theta, Y_n) - f(\gamma, Z_n)]| \\
        & \le \lim_{n\to \infty} 
        \mathbb E[|f(\theta, Y_n) - f(\gamma, Z_n)|].
    \end{align*}
    Using the conditions that $f$ is locally Lipschitz continuous and has quadratic growth, we have 
    \begin{align*}
        |\hat f(\theta) & - \hat f(\gamma)| \le  \lim_{n\to \infty} 
        \mathbb E[|f(\theta, Y_n) - f(\gamma, Z_n)|] \\
        & \le K \lim_{n\to \infty} 
        \mathbb E[(1 + \mathbb W_2(\theta, \delta_0) +
        \mathbb W_2(\gamma, \delta_0) + |Y_n| + |Z_n|) \cdot (\mathbb W_2(\theta, \gamma) + |Y_n - Z_n|)] \\
         & \le K \lim_{n\to \infty} 
        \mathbb E[(1 +2R + |Y_n| + |Z_n|) 
         (d + |Y_n - Z_n|)] \\
         & \le K \lim_{n\to \infty} 
        \left(\mathbb E[(1 +2R + |Y_n| + |Z_n|)^2] \right)^{1/2} 
          \cdot\lim_{n\to \infty} \left( \mathbb E[(d + |Y_n - Z_n|)^2] \right)^{1/2}, 
    \end{align*}
    where we have used the Cauchy-Schwarz inequality in the last step. Since $\theta, \gamma \in B_R^{\mathbb W_2}$, we have
    $$\lim_{n\to \infty} \left(\mathbb E[(1 +2R + |Y_n| + |Z_n|)^2] \right)^{1/2} \le K R $$
    for some constant $K > 0$ independent of $R$. Also, we have
    $$\lim_{n\to \infty} \left( \mathbb E[(d + |Y_n - Z_n|)^2] \right)^{1/2} \le K d$$
    due to $d = \mathbb W_2(\theta, \gamma)$. This proves \eqref{eq:ell_lip1}.
    At last, for any $\theta_t \to \theta$ in $\mathbb W_2$, continuity of $\hat f$ implies $\hat f(\theta_t) \to \hat f(\theta)$ as $t\to \infty$.
    \item[(ii)] Note that, if we denote $y = (x, a) \in \mathbb R^4$, then $\hat \ell_i(\theta, y) = \ell_i(\pi^x_\# \theta, x, \pi^i(y))$ 
    for $i=1,2$. Since $\ell_i$ is locally Lipschitz continuous and has quadratic growth, 
    and $\theta \mapsto \pi^x_\# \theta$  and $y \mapsto \pi^i(y)$ are Lipschitz continuous, we
    can conclude that $\hat \ell_i$ is locally Lipschitz continuous and has quadratic growth. Then, by the first part of the lemma, we have $\bar \ell_i$ is locally Lipschitz continuous for $i=1,2$. 
    Applying the first part of the lemma to $\hat \ell_i$ with $i=1,2$ respectively, we have 
    the Lipschitz continuity of $\bar \ell_i$ for $i=1,2$.

    \item[(iii)] Note that $\bar{\mathcal H}_{i,t}^\alpha$ can be rewritten as
    $$\bar{\mathcal H}_{i,t}^\alpha = \mathbb E \left[ \hat{\mathcal H}_i(\theta_t, X_t, \alpha_t) \right]$$
    for some function $\hat{\mathcal H}_i$ satisfying
    \begin{align*}
    \hat{\mathcal H}_i(\theta_t, X_t, \alpha_t) &= 
    D_x \frac{\delta v_i}{\delta \mu}(\mu_t, X_t) \cdot b(\mu_t, X_t,  \alpha_{i,t}) 
        \\ & \quad
       + \frac 1 2 \text{Tr}(D_{xx} \frac{\delta v_i}{\delta \mu}(\mu_t, X_t) \sigma \sigma^\top (\mu_t, X_t, \alpha_{i,t})) 
        + \ell_i(\mu_t, X_t,  \alpha_{i,t}),
    \end{align*} where $\theta_t$ is the distribution of $(X_t, \alpha_t)$. Note that $\pi^x_\# \theta_t = \mu_t$.
   Thanks to Assumptions \ref{asm:bsigma}, \ref{asm:ell}, and \ref{asm:vi},  we can conclude that
    $\hat{\mathcal H}_i$ is locally Lipschitz continuous and satisfies a quadratic growth condition. 
    Then, using the similar argument as in the second part of the proof, 
    together with the convergence of $\theta_t$ to $\theta_\infty^\alpha$ 
    for any $\alpha \in \mathcal A$,
    we can show that $\bar{\mathcal H}_{i,t}^\alpha$ is convergent as $t\to \infty$ to some constant $\bar{\mathcal H}_{i, \infty}^\alpha$. \qedhere
    \end{enumerate}
    \end{proof}

We are now ready to present the full version of the verification theorem
as an extended result of Proposition \ref{p:vt}.
Note that Proposition \ref{p:vt} shows the verification theorem
for the Nash equilibrium generated by $(\alpha_1^*, \alpha_2^*)$ 
and the ergodic constants $(\hat c_1, \hat c_2)$. 
Therefore it remains to show the verification theorem 
for the value functions $v_i$ of the HJB equations \eqref{eq:HJB}
to the auxiliary control problem. 
Different from the classical 
verification theorem, one can not simply 
equate $V_i$ from the game problem and the solution $v_i$ of 
HJB equation due to the nonuniqueness of $v_i$.

We need the following assumption: 
\begin{assumption}
    \label{asm:calH}
    The mapping $a_i \mapsto \mathcal H_i(\mu, x, p, Q, a)$ is strictly convex, where the Hamiltonian $\mathcal H_i$ is defined in \eqref{eq:calH}.
\end{assumption}

\begin{theorem}
    \label{thm:vt1}
   Let   Assumptions \ref{asm:bsigma} - \ref{asm:calH} hold.
    Suppose that $(v_1, v_2, c_1, c_2, \bar{\alpha}_1^*, \bar{\alpha}_2^*)$ is a solution 
    to the HJB equations \eqref{eq:HJB} such that 
     the control $\alpha^*$ generated by
    $\bar{\alpha}^*$ is admissible with corresponding state process $X^*$, 
    and the functions $v_1, v_2$ satisfy the conditions \eqref{eq:chain_condition} 
    and \eqref{eq:chain_condition2}.
    If the limit distribution $\mu^*_\infty =  \lim_{t\to \infty}\mathcal L(X_t^*)$ 
    is the unique invariant measure with respect to the control $\alpha^*$,
    then we have 
    \begin{description}
     \item[(i)] $\alpha^*$ is a Nash equilibrium for the game \eqref{eq:NE}, 
     \item[(ii)]  the corresponding ergodic constants are given by $\hat c_i = c_i$ for $i=1,2$,  and
     \item[(iii)]   the value functions $V_i$ of the auxiliary control problem \eqref{eq:NE2} are given by
    $$V_i(\mu_0) = v_i(\mu_0) - v_i(\mu_\infty^*), \quad i=1,2.$$ 
    \end{description}
\end{theorem}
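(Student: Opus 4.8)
The plan is to get parts (i) and (ii) for free and then devote the work to (iii). Since the hypotheses of Theorem \ref{thm:vt1} include Assumptions \ref{asm:bsigma}, \ref{asm:control_space}, \ref{asm:ell} together with the admissibility and chain-rule conditions, Proposition \ref{p:vt} applies verbatim and already yields both the Nash property and $\hat c_i = c_i$. The substance is part (iii), and the entire argument rests on one identity coming from the chain rule (Lemma \ref{lem:chain}). Applying it to $v_i$ along the state flow under any $\alpha\in\mathcal A_i(\alpha^*)$ and recognizing the drift-plus-diffusion integrand as $\mathcal H_i-\ell_i$, I would record, taking $i=1$ without loss of generality,
\begin{equation}\label{eq:plan_identity}
    \E\Big[\int_0^t(\ell_1-\hat c_1)\,ds\Big] = v_1(\mu_0) - v_1(\mu_t) + \int_0^t(\bar{\mathcal H}_{1,s}^\alpha - \hat c_1)\,ds,
\end{equation}
where $\bar{\mathcal H}_{1,s}^\alpha$ is the averaged Hamiltonian from Lemma \ref{lem:ell}(iii). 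The first and third equations of \eqref{eq:HJB}, read at the measure $\mu_s$, give the pointwise bound $\bar{\mathcal H}_{1,s}^\alpha \ge c_1 = \hat c_1$ for every $s$, with equality when $\alpha=\alpha^*$; hence the last integral in \eqref{eq:plan_identity} is nondecreasing and converges in $[0,+\infty]$.

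Next I would compute $V_1(\mu_0)=J_1(\mu_0,\alpha^*)$. For $\alpha^*$ the Hamiltonian correction vanishes, so letting $t\to\infty$ in \eqref{eq:plan_identity} and using the $\mathbb W_2$-continuity of $v_1$ (which follows from the boundedness of $\frac{\delta v_1}{\delta\mu}$ together with Assumption \ref{asm:vi}) and $\mathcal L(X_t^*)\to\mu_\infty^*$ yields $J_1(\mu_0,\alpha^*)=v_1(\mu_0)-v_1(\mu_\infty^*)$. Here the uniqueness of the invariant measure under $\alpha^*$ is exactly what guarantees that the limiting law is the same $\mu_\infty^*$ for every starting point $\mu_0$, so that the additive constant $v_1(\mu_\infty^*)$ is well defined and the normalization $V_1(\mu_\infty^*)=0$ holds.

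The decisive step is optimality, namely $J_1(\mu_0,\alpha)\ge V_1(\mu_0)$ for competing $\alpha\in\mathcal A_1(\alpha^*)$. Passing to the limit in \eqref{eq:plan_identity} gives $J_1(\mu_0,\alpha)=v_1(\mu_0)-v_1(\mu_\infty^\alpha)+L^\alpha$ with $L^\alpha:=\int_0^\infty(\bar{\mathcal H}_{1,s}^\alpha-\hat c_1)\,ds\ge 0$. If $L^\alpha=+\infty$ the inequality is trivial. If $L^\alpha<\infty$, then since $\bar{\mathcal H}_{1,s}^\alpha$ converges (Lemma \ref{lem:ell}(iii)) and its nonnegative excess over $\hat c_1$ is integrable, the limit must satisfy $\bar{\mathcal H}_{1,\infty}^\alpha=\hat c_1$. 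Writing this limit as an integral against $\theta_\infty^\alpha$ and subtracting the first HJB identity at $\mu_\infty^\alpha$, I obtain $\int[\mathcal H_1(\mu_\infty^\alpha,x,\cdot,a)-\inf_{a_1}\mathcal H_1(\mu_\infty^\alpha,x,\cdot,(a_1,\bar\alpha_2^*))]\,\theta_\infty^\alpha(dx,da)=0$ with nonnegative integrand; strict convexity (Assumption \ref{asm:calH}) then forces $\bar\alpha_1=\bar\alpha_1^*$ on the support of $\mu_\infty^\alpha$, hence $\bar\alpha=\bar\alpha^*$ there. Consequently the closed-loop coefficients under $\alpha$ and under $\alpha^*$ coincide at $\mu_\infty^\alpha$, so $\mu_\infty^\alpha$ is invariant for the $\alpha^*$-dynamics; uniqueness gives $\mu_\infty^\alpha=\mu_\infty^*$, and therefore $J_1(\mu_0,\alpha)\ge v_1(\mu_0)-v_1(\mu_\infty^*)=V_1(\mu_0)$. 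The symmetric argument handles $i=2$.

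I expect the main obstacle to be precisely this last identification $\mu_\infty^\alpha=\mu_\infty^*$ in the finite-cost case, where strict convexity, the McKean-Vlasov notion of invariance, and the uniqueness hypothesis must be combined with care. In particular, justifying that matching the closed-loop coefficients only at the single measure $\mu_\infty^\alpha$ (rather than globally) already forces $\mu_\infty^\alpha$ to be $\alpha^*$-invariant requires treating invariance as a fixed-point property of the flow at that measure, and the interchange of the $t\to\infty$ limit with the spatial integral defining $\bar{\mathcal H}_{1,\infty}^\alpha$ must be justified through the local-Lipschitz and quadratic-growth machinery of Lemma \ref{lem:ell}.
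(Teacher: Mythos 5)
Your proposal is correct and follows essentially the same route as the paper's proof: parts (i)--(ii) via Proposition \ref{p:vt}, the chain-rule identity combined with $\bar{\mathcal H}_{1,s}^\alpha \ge c_1$ to get the lower bound and the equality for $\alpha^*$, the reduction of the finite-cost case to $\bar{\mathcal H}_{1,\infty}^\alpha = \hat c_1$, strict convexity (Assumption \ref{asm:calH}) forcing $\bar\alpha_1 = \bar\alpha_1^*$ $\mu_\infty^\alpha$-a.e., and the uniqueness of the invariant measure to conclude $\mu_\infty^\alpha = \mu_\infty^*$. In fact, your closing observation---that one must argue $\mu_\infty^\alpha$ is invariant for the $\alpha^*$-dynamics because the closed-loop coefficients agree at that single measure---is spelled out more explicitly than in the paper, which simply asserts the identification from uniqueness.
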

\begin{proof}
In view of Proposition \ref{p:vt}, we only need to show the identity 
given in (iii) about the 
value function $V_i$ of the auxiliary control problem \eqref{eq:NE2}.

For any $\alpha \in \mathcal A_1(\alpha^*)$,
passing to the limit as $ {t\to \infty}$ on both 
sides of \eqref{eq:vt_pf1}, we have
\begin{equation}\label{eq:cvt_pf0}
    J_1(\mu_0, \alpha) = v_1(\mu_0) - v_1(\mu_\infty^{\alpha}) 
    + \lim_{t\to\infty} \int_0^t (\bar{\mathcal H}_{1,s}^\alpha - \hat c_1) ds,
\end{equation}
where 
$$\bar{\mathcal H}_{1,s}^\alpha = \mathbb E \left[
    \mathcal H_1 \left(\mu_s, X_s,  D_x \frac{\delta v_1}{\delta \mu}(\mu_s, X_s), 
    D_{xx} \frac{\delta v_1}{\delta \mu}(\mu_s, X_s), 
    (\alpha_{1,s}, \bar{\alpha}^*_2(\mu_s, X_s)) \right) 
    \right].$$
Since $v_1$ is the solution of the HJB equations \eqref{eq:HJB}, we
have $\bar{\mathcal H}_{1,s}^\alpha \ge \hat c_1$ for all $s\ge 0$ and 
\begin{equation}
    \label{eq:cvt_pf1}
    J_1(\mu_0, \alpha) \ge  
    v_1(\mu_0) - v_1(\mu_\infty^{\alpha}),
\end{equation} 
where  $\mu_t^\alpha: = \mathcal L(X_t)$, and
$\mu_t^\alpha$ converges to some distribution denoted by 
$\mu^\alpha_\infty \in \mathcal P_2(\R^2)$
as $t\to\infty$.
Moreover, if we take $\alpha = \alpha^*$ in \eqref{eq:cvt_pf1}, we have
\begin{equation}
    \label{eq:cvt_pf2}
    J_1(\mu_0, \alpha^*) = v_1(\mu_0) - v_1(\mu_\infty^*), 
\end{equation}
which implies that 
\begin{equation}
    \label{eq:cvt_pf3}
    V_1(\mu_0) < \infty.
\end{equation}
We rewrite $\bar{\mathcal H}_{1,s}^\alpha$ as
\begin{align*}
    \bar{\mathcal H}_{1,t}^\alpha & = \mathbb E \Big[
        D_x \frac{\delta v_1}{\delta \mu}(\mu_t, X_t) \cdot b(\mu_t, X_t,  \alpha_{1,t}) +
        \\ & \quad
        \frac 1 2 \text{Tr}(D_{xx} \frac{\delta v_1}{\delta \mu}(\mu_t, X_t) \sigma \sigma^\top (\mu_t, X_t, \alpha_{1,t})) 
        + \ell_1(\mu_t, X_t,  \alpha_{1,t}),
    \Big] 
\end{align*}
Since $(X_t, \alpha_t)$ converges in distribution to $\theta_\infty^\alpha$ as
$t\to\infty$, there exists a limit of $\bar{\mathcal H}_{1,t}^\alpha$ as 
$t\to\infty$, which we denote by $\bar{\mathcal H}_{1,\infty}^\alpha$
due to Lemma \ref{lem:ell} (iii).

If $\alpha$ satisfies $\bar{\mathcal H}_{1,\infty}^\alpha > \hat c_1$, we have $J_1(\mu_0, \alpha) = \infty$ 
and hence $\alpha$ is not optimal due to \eqref{eq:cvt_pf3}.
In other words, we only need to consider $\alpha \in \mathcal A_1(\alpha^*)$ such that 
$\bar{\mathcal H}_{1,\infty}^\alpha = \hat c_1$. 
Note that
\begin{align*}
    \bar{\mathcal H}&_{1,\infty}^\alpha  = 
\int_{\mathbb R^2} \mathcal H_1
\left(
    \mu_\infty^{\alpha}, x, D_x \frac{\delta v_1}{\delta \mu}(\mu_\infty^{\alpha}, x), D_{xx} \frac{\delta v_1}{\delta \mu}(\mu_\infty^{\alpha}, x), 
    (\bar{\alpha}_{1}(\mu_\infty^{\alpha}, x), \bar{\alpha}^*_2(\mu_\infty^{\alpha}, x)) \right) \mu_\infty^{\alpha}(dx).\end{align*}
As $v_1$ is a solution to the HJB equation \eqref{eq:HJB}, we have
$\bar{\mathcal H}_1(\mu_\infty^{\alpha}, {\alpha}_{1,\infty}) \ge \hat c_1$. 
Set $A: = \{x \in \R^2: \bar{\alpha}_{1}(\mu_\infty^{\alpha}, x) \neq \bar{\alpha}_1^*(\mu_\infty^{\alpha}, x)\}$. Note that \begin{align*}
\mathcal H_1 &  \left(\mu_\infty^\alpha, x,  D_x \frac{\delta v_1}{\delta \mu}(\mu_\infty^\alpha, x), 
D_{xx} \frac{\delta v_1}{\delta \mu}(\mu_\infty^\alpha, x),  (\bar\alpha_{1}(\mu_\infty^\alpha, x), \bar{\alpha}^*_2(\mu_\infty^\alpha, x)) \right)\\ &  \ge \mathcal H_1 \left(\mu_\infty^\alpha, x,  D_x \frac{\delta v_1}{\delta \mu}( \mu_\infty^\alpha, x), 
D_{xx} \frac{\delta v_1}{\delta \mu}( \mu_\infty^\alpha, x), (\ba_1^*(\mu_\infty^\alpha, x), \bar{\alpha}^*_2(\mu_\infty^\alpha, x)) \right), \quad \forall x \in \R^2;
\end{align*} the above inequality is a strict inequality on $A$ thanks to Assumption \ref{asm:calH}. If $\mu_\infty^\alpha(A) > 0$,   then by  \eqref{eq:HJB}, \begin{align*}
\int_{\R^2}&  \mathcal H_1   \left(\mu_\infty^\alpha, x,  D_x \frac{\delta v_1}{\delta \mu}(\mu_\infty^\alpha, x), 
D_{xx} \frac{\delta v_1}{\delta \mu}(\mu_\infty^\alpha, x),  (\bar\alpha_{1}(\mu_\infty^\alpha, x), \bar{\alpha}^*_2(\mu_\infty^\alpha, x)) \right)\mu_\infty^{\alpha}(dx) \\ 
& >\int_{A}  \mathcal H_1 \left(\mu_\infty^\alpha, x,  D_x \frac{\delta v_1}{\delta \mu}( \mu_\infty^\alpha, x), 
D_{xx} \frac{\delta v_1}{\delta \mu}( \mu_\infty^\alpha, x), (\ba_1^*(\mu_\infty^\alpha, x), \bar{\alpha}^*_2(\mu_\infty^\alpha, x)) \right)\mu_\infty^{\alpha}(dx)  \\ 
&\ \  + \int_{\R^2\setminus A}  \mathcal H_1 \left(\mu_\infty^\alpha, x,  D_x \frac{\delta v_1}{\delta \mu}( \mu_\infty^\alpha, x), 
D_{xx} \frac{\delta v_1}{\delta \mu}( \mu_\infty^\alpha, x), (\ba_1^*(\mu_\infty^\alpha, x), \bar{\alpha}^*_2(\mu_\infty^\alpha, x)) \right)\mu_\infty^{\alpha}(dx)\\
& = \hat c_1.
\end{align*} This is a contradiction to the assumption that $\bar{\mathcal H}_{1,\infty}^\alpha = \hat c_1$. 
Hence, we must have $\mu_\infty^\alpha(A) = 0$, which, in turn,  implies that  under the assumption that $\bar{\mathcal H}_{1,\infty}^\alpha = \hat c_1$, we necessarily have 
$$\bar{\alpha}_{1}(\mu_\infty^{\alpha}, x) = \bar{\alpha}_1^*(\mu_\infty^{\alpha}, x),
\ \ \mu_\infty^{\alpha}\text{-a.e. } x \in \mathbb R^2.$$

Therefore, we have $\alpha = \alpha^*$ and $\mu_\infty^{\alpha} = \mu_\infty^*$ due to 
the uniqueness of the invariant measure $\mu_\infty^*$ in the theorem statement.
Summarizing the above arguments, we have shown that, 
$$V_1(\mu_0) = v_1(\mu_0) - v_1(\mu_\infty^*).$$ 
A similar argument shows that  $V_2(\mu_0) = v_2(\mu_0) - v_2(\mu_\infty^*)$. 
\end{proof}

\section{Linear Quadratic Control} 
\label{sec:LQG}
This section focuses on the linear-quadratic-Gaussian (LQG) problem. The goal is to find an equilibrium for \eqref{eq:NE} in the context of the LQG problem as an application of the verification theorem, Theorem \ref{thm:vt1}.
Section \ref{sec:LQ-ex0} considers a setting with separable controls and strong coercivity, which covers LQG as a special case and streamlines the presentation for the remainder of the section.
In the subsequent subsections, we present specific LQG problems 
that generalize the illustrative model in Section \ref{sec:intro}.
We solve the infinite-dimensional Master equations directly by exploiting the polynomial structure on the measure space, a calculation that is interesting in its own right. Moreover, the 
results are illuminating for the theory developed in the previous sections.
For instance, the non-uniqueness of the constants $c_i$ can be observed explicitly in \eqref{eq:c1c2}
during the calculation, a phenomenon less intuitive than the non-uniqueness of the value functions $v_i$, which is apparent from the form of the HJB equations.

\subsection{A special setting with separable controls and strong coercivity}    
\label{sec:LQ-ex0}

We now  consider the HJB \eqref{eq:HJB}
in the following setting.

\begin{assumption}
    \label{asm:1}
  The functions   $b$ and $\sigma$ satisfy
        $$b(\mu, x, a) = \bar b(\mu, x) + a, \ 
        \sigma (\mu, x, a) = \bar \sigma(\mu, x),\ \ \forall (\mu,x,a ) \in \mathcal P_2(\R^2)\times \R^2\times \R^2,$$
        where $\bar b: \mathcal P_2(\mathbb R^2) \times \mathbb R^2 \to \mathbb R^2$ and    
        $\bar \sigma: \mathcal P_2(\mathbb R^2) \times \mathbb R^2 \to \mathbb R^{2\times 2}$ are Lipschitz continuous functions, i.e., there exists a positive constant $K $ so that 
       \begin{equation}
\label{eq-Lip}
 |\bar b(\mu_1, x_1) - \bar b(\mu_2, x_2)|^{2} +
        |\bar \sigma(\mu_1, x_1) - \bar \sigma(\mu_2, x_2)|^{2} \le K(\mathbb W_2^{2}(\mu_1, \mu_2) + |x_1 - x_2|^{2})
\end{equation}for any  $(\mu_{1},x_{1}),  (\mu_{2},x_{2})\in \cP_{2}(\R^{2}) \times \R^{2}$.
\end{assumption}

It is obvious that Assumption \ref{asm:1} implies Assumption \ref{asm:bsigma}.

\begin{lemma}
    \label{lem:nonempty}
    If Assumption \ref{asm:1} holds,
    then the set $\mathcal A$ is non-empty, and thus Assumption \ref{asm:control_space} holds.
\end{lemma}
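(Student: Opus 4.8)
The plan is to construct a single admissible control explicitly, exploiting the separable structure $b(\mu,x,a)=\bar b(\mu,x)+a$ in Assumption \ref{asm:1} to cancel the drift and install a strongly mean-reverting closed loop. Concretely, fix a large constant $\lambda>0$ and take the feedback $\bar\alpha(\mu,x) = -\bar b(\mu,x) - \lambda x$, so that under $\alpha_t = \bar\alpha(\mu_t,X_t)$ the state equation \eqref{eq:X} reduces to $dX_t = -\lambda X_t\,dt + \bar\sigma(\mu_t,X_t)\,dW_t$. Since $\bar\alpha$ is Lipschitz in $(\mu,x)$ by \eqref{eq-Lip}, the closed-loop drift $-\lambda x$ and diffusion $\bar\sigma$ are Lipschitz, so standard well-posedness for McKean--Vlasov SDEs (see \cite{CD18I}) yields a unique square-integrable solution on every finite interval. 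Hence $\bar\alpha$ defines a stationary Markov control in the sense of Definition \ref{def:admissible}, and it remains only to verify the convergence \eqref{eq:law-conv}.

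For the long-time behaviour I would first establish a Wasserstein contraction for the flow $\Phi_t(\mu_0) := \mathcal L(X_t)$. Let $X_t, Y_t$ solve the closed-loop equation with initial laws $\mu_0,\nu_0$ and the same Brownian motion, and set $g(t) = \mathbb E[|X_t - Y_t|^2]$. Itô's formula together with the Lipschitz bound \eqref{eq-Lip} and the elementary coupling inequality $\mathbb{W}_2^2(\mu_t,\nu_t) \le g(t)$ gives $g'(t) \le (-2\lambda + C_K)\,g(t)$ for a constant $C_K$ depending only on $K$; choosing $\lambda$ large enough that $c := \lambda - C_K/2 > 0$ and taking the initial coupling optimal, $\mathbb{W}_2^2(\Phi_t(\mu_0),\Phi_t(\nu_0)) \le g(t) \le e^{-2ct}\mathbb{W}_2^2(\mu_0,\nu_0)$. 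Thus each $\Phi_{t_0}$ (for $t_0>0$) is a strict contraction on the complete space $(\mathcal P_2(\mathbb R^2), \mathbb{W}_2)$, so the Banach fixed point theorem produces a unique invariant measure $\mu_\infty$; the semigroup property $\Phi_{t+s}=\Phi_t\circ\Phi_s$ promotes $\mu_\infty$ to an invariant measure for all $t$, and the contraction then yields $\mathbb{W}_2(\Phi_t(\mu_0),\mu_\infty) \le e^{-ct}\mathbb{W}_2(\mu_0,\mu_\infty) \to 0$.

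It remains to lift this marginal convergence of $\mu_t = \mathcal L(X_t)$ to convergence of the joint law $\mathcal L(X_t,\alpha_t)$ in $\mathbb{W}_2(\mathbb R^4)$, as required by \eqref{eq:law-conv}. Here I would compare $X_t$ with a stationary solution $\widetilde X_t$ started from $\mu_\infty$ (so $\mathcal L(\widetilde X_t) = \mu_\infty$ for all $t$), coupled synchronously, and use $\alpha_t = \bar\alpha(\mu_t,X_t)$, $\widetilde\alpha_t = \bar\alpha(\mu_\infty,\widetilde X_t)$. The Lipschitz continuity of $\bar\alpha$ gives $|\alpha_t - \widetilde\alpha_t|^2 \le L\,(\mathbb{W}_2^2(\mu_t,\mu_\infty) + |X_t - \widetilde X_t|^2)$, so that $\mathbb{W}_2^2(\mathcal L(X_t,\alpha_t), \theta_\infty) \le \mathbb E[|X_t - \widetilde X_t|^2] + \mathbb E[|\alpha_t - \widetilde\alpha_t|^2] \to 0$, where $\theta_\infty := (\mathrm{id},\bar\alpha(\mu_\infty,\cdot))_\#\mu_\infty \in \mathcal P_2(\mathbb R^4)$. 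This verifies \eqref{eq:law-conv} and shows $\alpha \in \mathcal A$, so $\mathcal A \ne \emptyset$ and Assumption \ref{asm:control_space} holds.

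The main obstacle is the McKean--Vlasov nonlinearity: the $\mu$-dependence in $\bar\sigma$ couples the two copies through their laws, which is precisely what forces the strong coercivity requirement (choosing $\lambda$ large) so that the drift dissipation $-2\lambda$ dominates the combined Lipschitz contribution $C_K$ arising from both the state and the measure arguments. Care is also needed to promote the fixed point of a single map $\Phi_{t_0}$ to a genuine invariant measure for the whole semigroup, and to track second moments so that the limiting objects retain finite second moments, giving $\mu_\infty \in \mathcal P_2(\mathbb R^2)$ and $\theta_\infty \in \mathcal P_2(\mathbb R^4)$.
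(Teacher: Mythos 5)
Your proof is correct, and it shares the paper's core idea---install an explicit, strongly dissipative feedback and deduce ergodicity from a Wasserstein dissipativity estimate---but the execution differs in three respects. First, the paper does not cancel the drift: it takes the simpler feedback $\bar{\alpha}(\mu,x) = -Cx$, keeps $\bar b$ in the closed loop, and absorbs its Lipschitz constant into the one-sided estimate
\begin{equation*}
2 (x_1 - x_2)^\top \bigl(b(\mu_1, x_1, \bar{\alpha}(\mu_1, x_1)) - b(\mu_2, x_2, \bar{\alpha}(\mu_2, x_2))\bigr)
+ \left| \bar \sigma(\mu_1, x_1) - \bar \sigma(\mu_2, x_2) \right|^2
\le -(2C - (K+1)) |x_1 - x_2|^2 + K \mathbb W_2^2(\mu_1, \mu_2),
\end{equation*}
with $C > \frac{2K+1}{2}$, whereas your control $\bar{\alpha}(\mu,x) = -\bar b(\mu,x) - \lambda x$ makes the closed-loop drift exactly $-\lambda x$, so only $\bar\sigma$'s Lipschitz constant enters your contraction. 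Second, the paper then simply cites \cite[Theorem 3.1]{Wang18} as a black box to obtain convergence of $\mathcal L(X_t)$ in $\mathbb W_2$, while you reprove this from scratch: synchronous coupling, Gr\"onwall, strict contraction of the law flow $\Phi_{t_0}$ on $(\mathcal P_2(\mathbb R^2), \mathbb W_2)$, Banach fixed point, and promotion of the fixed point to an invariant measure of the whole semigroup---a self-contained argument that also yields an explicit exponential rate. Third, you explicitly upgrade the marginal convergence $\mu_t \to \mu_\infty$ to convergence of the joint law $\mathcal L(X_t, \alpha_t)$ in $\mathbb W_2(\mathbb R^4)$, as \eqref{eq:law-conv} actually requires, by coupling with a stationary copy and using the Lipschitz continuity of $\bar\alpha$; the paper passes over this step with ``this of course implies that $\mathcal A \neq \emptyset$'' (for its linear feedback $-Cx$ the joint law is just the pushforward of $\mu_t$ under the Lipschitz map $x \mapsto (x, -Cx)$, so the step is indeed immediate there, but your treatment is the more careful one and is genuinely needed for your $\mu$-dependent feedback). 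In short: the paper's route is shorter and its feedback is the form reused in the LQG section; yours is self-contained, rate-explicit, and rigorous on the joint-law requirement.
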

\begin{proof}
    
    Consider the following linear feedback control generated by
    $\bar{\alpha} (\mu, x)= - C x$
    for some constant 
    $C \in \mathbb R$ to be determined. 
    Thanks to \eqref{eq-Lip}, for any $\mu_{0}\in \cP_{2}(\R^{2})$, \eqref{eq:X} has a unique square integrable solution $X$ 
    corresponding to the control driven by $\bar{\alpha}(\mu, x)=-Cx$.
    
    Moreover, for any $(\mu_{1},x_{1}),  (\mu_{2},x_{2})\in \cP_{2}(\R^{2}) \times \R^{2}$, we can compute 
    \begin{align*}
         2& (x_1 - x_2)^\top \left(b(\mu_1, x_1, \bar{\alpha}(\mu_1, x_1)) - b(\mu_2, x_2, \bar{\alpha}(\mu_2, x_2))\right)
        + \left| \bar \sigma(\mu_1, x_1) - \bar \sigma(\mu_2, x_2) \right|^2 \\
        &=  2 (x_1 - x_2)^\top \left(\bar b(\mu_1, x_1) - \bar b(\mu_2, x_2)\right)
        - 2C |x_1 - x_2|^2 + \left| \bar \sigma (\mu_1, x_1) - \bar \sigma(\mu_2, x_2) \right|^2 \\
       & \le -(2C -  (K + 1)) |x_1 - x_2|^2 + K \mathbb W_2^2 (\mu_1, \mu_2).
    \end{align*}
    Therefore, in view of  \cite[Theorem 3.1]{Wang18},  
    with   $C> \frac{2 K +1}{2}$, 
    the law of $X_{t}$ converges to some 
    $\mu \in \cP_{2}(\R^{2})$ in $\bW_2(\R^{2})$. 
    This of course implies that $\mathcal A \neq \emptyset$.
\end{proof}

To proceed, we impose the following strong coercivity assumption  to the cost $\ell_i$.
\begin{assumption} \label{asm:2}
 For each $i = 1, 2$, the function  $\ell_i(\mu, x, a_i)$ is strictly convex in $a_i$ for all $(\mu, x)\in \cP_{2}(\R^{2})\times \R^{2}$ and 
        satisfies
        $$\lim_{|a_i|\to \infty} \frac{\ell_i(\mu, x, a_i)}{|a_i|} = \infty, \quad \forall (\mu, x)\in \cP_{2}(\R^{2})\times \R^{2}.$$
\end{assumption}
We also define Hamiltonian functions $H_i$ by
\begin{equation}
    \label{eq:Hi}
    H_i(\mu, x, p) = \inf_{a_i\in \R} \left\{ 
        pa_i + \ell_i(\mu, x, a_i) \right\}, \quad  \forall (\mu, x, p)\in \cP_{2}(\R^{2})\times \R^{2}\times \R,
\end{equation}
for $i = 1, 2$. 

\begin{corollary}
    \label{cor:vt1}
   Let   Assumptions \ref{asm:ell},  \ref{asm:vi}, \ref{asm:1}, and \ref{asm:2} hold.
    Suppose $(v_1, v_2, c_1, c_2)$ is a solution to the HJB equation, 
    $\forall (\mu, x) \in \mathcal P_2(\mathbb R^2) \times \mathbb R^2,$
    \begin{equation}
    \label{eq:HJB2}
    \begin{cases}
        \displaystyle 
        \int_{\mathbb R^2} \bigg[ 
            H_1 \bigg(\mu, x, \partial_{x_1} 
        \frac{\delta v_1}{\delta \mu}(\mu, x)\bigg)  
        + \frac 1 2 \tr
        \bigg(\bar \sigma \bar \sigma^\top (\mu,x) D_{xx} \frac{\delta v_1}{\delta \mu}
        (\mu, x) \bigg) 
        \\ \displaystyle \hspace{.35in} 
        +
        D_x \frac{\delta v_1}{\delta \mu}(\mu, x) \cdot \bar b(\mu, x) 
        + 
        \partial_{x_2} \frac{\delta v_1}{\delta \mu}(\mu, x) \partial_{p} H_2\bigg(\mu, x, \partial_{x_2} \frac{\delta v_2}{\delta \mu}(\mu, x)\bigg) 
         \bigg] \mu(dx) 
        = c_1, 
        \\ \displaystyle \int_{\mathbb R^2}  \bigg[ 
            H_2\bigg(\mu, x, \partial_{x_2} \frac{\delta v_2}{\delta \mu}(\mu, x)\bigg) 
        + \frac 1 2 \tr
        \bigg(\bar \sigma \bar \sigma^\top (\mu,x) D_{xx} \frac{\delta v_2}{\delta \mu}(\mu, x) \bigg) 
        \\ \displaystyle \hspace{.35in} +
        D_x \frac{\delta v_2}{\delta \mu}(\mu, x) \cdot \bar b(\mu, x) 
        + 
         \partial_{x_1} \frac{\delta v_2}{\delta \mu}(\mu, x) 
        \partial_{p} H_1 \bigg(\mu, x, \partial_{x_1} \frac{\delta v_1}{\delta \mu}(\mu, x) \bigg) \bigg]   \mu(dx)
        = c_2, 
    \end{cases}
\end{equation}
    such that $v_1$ and $v_2$ are fully-$\mathcal C^2$ and 
    satisfy \eqref{eq:chain_condition}
    and \eqref{eq:chain_condition2}. Also assume that there exists a feedback control 
    $\alpha^* \in \mathcal A$
    generated by
     \begin{equation}
        \label{eq:aistar}
        \bar{\alpha}_i^*(\mu, x) = \partial_p H_i \left(\mu, x, \partial_{x_i} \frac{\delta v_i}{\delta \mu}(\mu, x)\right), \quad i =1, 2.
     \end{equation} 
     Suppose 
     $\mathcal L(X^*_t) $ is convergent to
     some unique invariant measure $\mu_\infty^{*}\in \cP_{2}(\R^{2})$
    as $t\to \infty$ with its support $\mathbb R^2$,
    then the value functions $V_i$ of \eqref{eq:NE2} are given by
    $$V_i(\mu_0) = v_i(\mu_0) - v_i(\mu_\infty^{*}), \quad i=1,2.$$
    Moreover, $(c_1, c_2)$ are equal to ergodic constant 
    $(\hat c_1, \hat c_2)$ of \eqref{eq:NE} and 
    $\alpha^*$ is a Nash equilibrium of the auxiliary control problem \eqref{eq:NE2}. 
\end{corollary}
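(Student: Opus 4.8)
The plan is to obtain this corollary as a direct specialization of Theorem \ref{thm:vt1}: under Assumptions \ref{asm:1} and \ref{asm:2} the reduced system \eqref{eq:HJB2} together with the feedback \eqref{eq:aistar} is nothing but the general Master system \eqref{eq:HJB} written out explicitly in the separable, coercive setting. Thus the proof has two parts. First I would check that Assumptions \ref{asm:1}, \ref{asm:2} (together with the standing \ref{asm:ell}, \ref{asm:vi}) supply every hypothesis of Theorem \ref{thm:vt1}. Second I would show that a solution $(v_1,v_2,c_1,c_2)$ of \eqref{eq:HJB2} with controls given by \eqref{eq:aistar} is exactly a solution tuple $(v_1,v_2,c_1,c_2,\bar\alpha_1^*,\bar\alpha_2^*)$ of \eqref{eq:HJB} in the sense of Definition \ref{def:1}. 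Once both are in place, conclusions (i)--(iii) of Theorem \ref{thm:vt1} transfer verbatim.

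For the first part, Assumption \ref{asm:bsigma} follows from Assumption \ref{asm:1} (as already noted after its statement), and Assumption \ref{asm:control_space} follows from Lemma \ref{lem:nonempty}; Assumptions \ref{asm:ell} and \ref{asm:vi} are imposed directly. The only point requiring an argument is Assumption \ref{asm:calH}. Under Assumption \ref{asm:1} the Hamiltonian \eqref{eq:calH} becomes
\[
\mathcal H_i(\mu,x,p,Q,a) = p\cdot\bar b(\mu,x) + p\cdot a + \tfrac12\tr\!\big(Q\,\bar\sigma\bar\sigma^\top(\mu,x)\big) + \ell_i(\mu,x,a_i),
\]
so as a function of $a_i$ it is the strictly convex map $\ell_i(\mu,x,\cdot)$ (Assumption \ref{asm:2}) plus the affine term $p_i a_i$; hence $\mathcal H_i$ is strictly convex in $a_i$, giving Assumption \ref{asm:calH}.

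The central step is the reduction of \eqref{eq:HJB} to \eqref{eq:HJB2}. Fix $(\mu,x)$, set $p=D_x\frac{\delta v_1}{\delta\mu}(\mu,x)$ with components $p_1=\partial_{x_1}\frac{\delta v_1}{\delta\mu}$, $p_2=\partial_{x_2}\frac{\delta v_1}{\delta\mu}$, and note that the only $a_1$-dependent part of $\mathcal H_1$ is $p_1 a_1+\ell_1(\mu,x,a_1)$. By the definition \eqref{eq:Hi} of $H_1$,
\[
\inf_{a_1\in\R}\mathcal H_1\Big(\mu,x,p,D_{xx}\tfrac{\delta v_1}{\delta\mu},(a_1,\bar\alpha_2^*(\mu,x))\Big)
= H_1(\mu,x,p_1) + p\cdot\bar b(\mu,x) + \tfrac12\tr\!\Big(D_{xx}\tfrac{\delta v_1}{\delta\mu}\,\bar\sigma\bar\sigma^\top\Big) + p_2\,\bar\alpha_2^*(\mu,x),
\]
and substituting $\bar\alpha_2^*=\partial_p H_2(\mu,x,\partial_{x_2}\frac{\delta v_2}{\delta\mu})$ from \eqref{eq:aistar} reproduces the integrand of the first line of \eqref{eq:HJB2}, with the second line handled symmetrically. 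Under Assumption \ref{asm:2} the map $a_i\mapsto p_i a_i+\ell_i(\mu,x,a_i)$ is strictly convex and superlinear, so its infimum is attained at a unique minimizer, the conjugate $H_i(\mu,x,\cdot)$ is continuously differentiable in $p$, and this minimizer equals $\partial_p H_i(\mu,x,p_i)$. This Legendre--Fenchel relation identifies the argmin in the third and fourth equations of \eqref{eq:HJB} with the feedback \eqref{eq:aistar} and yields the measurability of $\bar\alpha_i^*$ required in Definition \ref{def:1}. I expect this duality step---existence and uniqueness of the minimizer, the $\mathcal C^1$-dependence of $H_i$ on $p$, and the identity $\partial_p H_i=\argmin$---to be the main technical point.

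With the equivalence established and Assumptions \ref{asm:bsigma}--\ref{asm:calH} verified, the tuple $(v_1,v_2,c_1,c_2,\bar\alpha_1^*,\bar\alpha_2^*)$ is a solution of \eqref{eq:HJB} whose generated feedback $\alpha^*$ is admissible with $\mathcal L(X_t^*)\to\mu_\infty^*$, the unique invariant measure. Theorem \ref{thm:vt1} then delivers directly that $\alpha^*$ is a Nash equilibrium of \eqref{eq:NE}, that $\hat c_i=c_i$, and that $V_i(\mu_0)=v_i(\mu_0)-v_i(\mu_\infty^*)$; the minimization built into these identities is exactly the statement that $\alpha^*$ is optimal for the auxiliary problem \eqref{eq:NE2}. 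I note that the corollary's hypotheses are in fact stronger than Theorem \ref{thm:vt1} requires---the latter needs only uniqueness of the invariant measure---so the theorem applies directly. The additional assumption that $\mu_\infty^*$ has full support $\R^2$ is what I would use to promote the $\mu_\infty^*$-a.e. optimality of $\bar\alpha^*$ obtained in the proof of Theorem \ref{thm:vt1} to genuine pointwise agreement of any competing feedback with $\bar\alpha_i^*$, making the identification $\alpha=\alpha^*$ transparent.
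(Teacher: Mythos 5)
Your proposal is correct and follows essentially the same route as the paper's proof: verify Assumptions \ref{asm:bsigma}, \ref{asm:control_space} (via Lemma \ref{lem:nonempty}), and \ref{asm:calH} from Assumptions \ref{asm:1}--\ref{asm:2}, decompose $\inf_{a_i}\mathcal H_i$ using the separable drift to reduce \eqref{eq:HJB} to \eqref{eq:HJB2} with the feedback \eqref{eq:aistar}, and invoke Theorem \ref{thm:vt1}. Your treatment is in fact slightly more careful than the paper's, which asserts without argument the unique attainment of the minimizer and the identity $\bar\alpha_i^*=\partial_p H_i$ that you justify via the strict convexity/superlinearity and envelope (Legendre--Fenchel) reasoning.
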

\begin{proof}
    To apply the verification Theorem \ref{thm:vt1}, we need to verify all assumptions.
    Assumption \ref{asm:bsigma} is satisfied by Assumption \ref{asm:1}.
    Assumption \ref{asm:control_space} holds thanks to Lemma \ref{lem:nonempty}. Moreover, Assumptions \ref{asm:1} and \ref{asm:2} together imply Assumption \ref{asm:calH}.

    Using  the definitions of $H_i$ and 
    $\mathcal H_i$ in \eqref{eq:Hi}  and \eqref{eq:calH}, respectively, we can write 
    $$
    \inf_{a_i\in \mathbb R}
    \mathcal H_i(\mu, x, p, Q, (a_i, a_{3-i})) 
    = H(\mu, x, p_i) +
    p\cdot \bar b (\mu, x) + \frac 1 2 
    \text{Tr}(\bar \sigma \bar \sigma^\top (\mu, x) Q) + p_{3-i} a_{3-i},$$ for $i=1,2$. 
    Consequently, under Assumption \ref{asm:2},   
      the minimizer of the function $\mathcal H_i$ over $a_i$ 
    uniquely exists and is given by \eqref{eq:aistar}. 
    Furthermore, the HJB equation \eqref{eq:HJB} can be reduced to 
    \eqref{eq:HJB2}. At last, $\mu_\infty^*$ has full support $\mathbb R^2$ and 
    hence is sufficient to guarantee the uniqueness of the invariant measure of the optimal path $\mathcal L(X_t^*)$ as $t\to\infty$.
    The rest is a consequence of Theorem \ref{thm:vt1}.
\end{proof}

\subsection{A Linear-Quadratic setting with linear cost in measure}
\label{sec:LQ-ex1}

We consider a simplified model with a pair of 
controlled diffusion processes 
\begin{equation}
    \label{eq:X_ex1}
        dX_t = \alpha_t dt + d W_t.
\end{equation}
The cost function for player $i$ ($i=1,2$) is given by
\begin{equation*}
    \hat J_i(\alpha_1, \alpha_2)
    = \lim_{T \to \infty} \frac{1}{T}
    \mathbb E \left[ \int_0^T \left(
        \gamma \mathbb E [ |X_t|^2 ] + (1 - \gamma) |X_{t}|^2
        + r_i |\alpha_{i,t}|^2 
    \right) dt \right],
\end{equation*}
where $\gamma$ and $r_i$ are given constants.
The cost functional of the auxiliary control problem can be accordingly formulated as
\begin{equation} \label{eq:J_ex1}
    J_i(\mu_0, \alpha) = \lim_{T \to \infty} \mathbb{E} 
    \left[ \int_0^T  \left(
        \gamma \mathbb E [ |X_t|^2 ] + (1 - \gamma) |X_{t}|^2
        + r_i |\alpha_{i,t}|^2 
     - \hat c_i \right) dt \right], \quad i = 1, 2,
\end{equation}
where $\hat c_i$ are the ergodic constants to be determined.

The main objective is to identify a Nash equilibrium
$\alpha^* = (\alpha_1^*, \alpha_2^*)$ and ergodic constants
$(\hat c_1, \hat c_2)$ for the game problem defined by \eqref{eq:NE}, 
together with the corresponding value functions $V_i$ 
of the auxiliary control problem defined by \eqref{eq:NE2}.
Here is the main result for this example.

\begin{proposition}
  \label{prop:LQ_ex1}
  Consider the game problem defined through \eqref{eq:X_ex1}-\eqref{eq:J_ex1}. 
  Assume $r_i>0$ for $i=1,2$.
  Then, there exists a unique Nash equilibrium $\alpha^* = (\alpha_1^*, \alpha_2^*)$
  given by the linear feedback control
  \begin{equation*}
      \bar{\alpha}_i^* (\mu, x) = - \frac{1}{\sqrt{r_i}} x_i, \quad i = 1, 2.
  \end{equation*}
  The corresponding ergodic constants $(\hat c_1, \hat c_2)$ are given by
  \begin{equation*}
      \hat c_1 = \sqrt{r_1} + \frac{\sqrt{r_2}}{2}, \quad
      \hat c_2 = \sqrt{r_2} + \frac{\sqrt{r_1}}{2}.
  \end{equation*}
  Moreover, the value functions $V_i$ of \eqref{eq:NE2} are given by
  \begin{equation*}
      V_i(\mu_0) = \int_{\mathbb R^2} \bar V_i(x) \mu_0(dx), 
  \end{equation*}
  where
  \begin{equation*}
      \bar V_i(x) = \sqrt{r_i}x_i^2 + \frac{\sqrt{r_{3-i}}}{2} x_{3-i}^2 - \frac{r_i}{2} 
      - \frac{r_{3-i}}{4}, \quad i = 1, 2.
  \end{equation*}
\end{proposition}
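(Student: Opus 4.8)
The plan is to derive everything as an application of the verification Corollary \ref{cor:vt1}, using a polynomial ansatz on the measure space. First I would place the model in the setting of Section \ref{sec:LQ-ex0}: here $\bar b\equiv 0$ and $\bar\sigma\equiv I_2$ (so $\bar\sigma\bar\sigma^\top=I_2$), and the running cost
$$\ell_i(\mu,x,a_i)=\gamma[\mu]_{I_2}+(1-\gamma)|x|^2+r_i a_i^2$$
is quadratic in $(x,a_i)$ and affine in the measure through $\gamma[\mu]_{I_2}$. Since $r_i>0$, it is strictly convex and superlinearly coercive in $a_i$, so Assumptions \ref{asm:1} and \ref{asm:2} hold, Assumption \ref{asm:ell} follows from the estimates in Example \ref{ex:1}, and Assumption \ref{asm:calH} is automatic. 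The reduced Hamiltonian \eqref{eq:Hi} is then $H_i(\mu,x,p)=\gamma[\mu]_{I_2}+(1-\gamma)|x|^2-\tfrac{p^2}{4r_i}$, with $\partial_p H_i(\mu,x,p)=-\tfrac{p}{2r_i}$.

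Next I would posit the first-order polynomial ansatz $v_i(\mu)=[\mu]_{Q_i}$ with $Q_i\in S_2(\R)$ to be determined, for which $\frac{\delta v_i}{\delta\mu}(\mu,x)=x^\top Q_i x-[\mu]_{Q_i}$, $D_x\frac{\delta v_i}{\delta\mu}(\mu,x)=2Q_i x$, and $D_{xx}\frac{\delta v_i}{\delta\mu}(\mu,x)=2Q_i$; this form makes Assumption \ref{asm:vi} and the chain-rule conditions \eqref{eq:chain_condition}--\eqref{eq:chain_condition2} immediate, since the first derivative is linear in $x$ and the second is constant. Substituting into the coupled system \eqref{eq:HJB2} and integrating against $\mu$, each left-hand side becomes an affine function of the second moments $\int x_jx_k\,\mu(dx)$. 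Requiring this to equal the constant $c_i$ for every $\mu$ forces the coefficient of each second moment to vanish, producing a coupled algebraic (Riccati-type) system in the entries of $Q_1,Q_2$, while the leftover constant gives $c_i=\tr(Q_i)$. A structural check occurs here: the $\gamma[\mu]_{I_2}$ and $(1-\gamma)|x|^2$ contributions combine into a $\gamma$-independent coefficient $1$ on $\int|x|^2\,\mu(dx)$, so the system---and hence the entire solution---is independent of $\gamma$, as anticipated in the introduction. Taking the diagonal stabilizing root yields $Q_1=\diag(\sqrt{r_1},\tfrac{\sqrt{r_2}}{2})$ and $Q_2=\diag(\tfrac{\sqrt{r_1}}{2},\sqrt{r_2})$, whence $c_1=\sqrt{r_1}+\tfrac{\sqrt{r_2}}{2}$ and $c_2=\sqrt{r_2}+\tfrac{\sqrt{r_1}}{2}$.

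With $Q_i$ fixed, the feedback follows from \eqref{eq:aistar}: $\bar\alpha_i^*(\mu,x)=-\tfrac{1}{2r_i}\,\partial_{x_i}\frac{\delta v_i}{\delta\mu}(\mu,x)=-\tfrac{(Q_i x)_i}{r_i}=-\tfrac{1}{\sqrt{r_i}}\,x_i$. The closed loop then splits into two independent Ornstein--Uhlenbeck processes $dX_{i,t}=-\tfrac{1}{\sqrt{r_i}}X_{i,t}\,dt+dW_{i,t}$, whose laws converge in $\mathbb W_2$ to the centered Gaussian $\mu_\infty^*=\mathcal N\big(0,\diag(\tfrac{\sqrt{r_1}}{2},\tfrac{\sqrt{r_2}}{2})\big)$; this control is admissible, and since $\mu_\infty^*$ has full support it is the unique invariant measure, so every hypothesis of Corollary \ref{cor:vt1} is met. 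The corollary then identifies $(c_1,c_2)$ with the ergodic constants $(\hat c_1,\hat c_2)$ and gives $V_i(\mu_0)=v_i(\mu_0)-v_i(\mu_\infty^*)=[\mu_0]_{Q_i}-[\mu_\infty^*]_{Q_i}$. Evaluating the Gaussian second moments $[\mu_\infty^*]_{Q_i}$ produces the constants $\tfrac{r_i}{2}+\tfrac{r_{3-i}}{4}$, and collecting terms gives $V_i(\mu_0)=\int_{\R^2}\bar V_i\,d\mu_0$ with the stated $\bar V_i$.

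It remains to address uniqueness. By strict convexity of $a_i\mapsto\mathcal H_i$ each player's best response is single-valued, so a Nash equilibrium corresponds to a solution of the coupled best-response conditions, which for this linear-quadratic model reduce to the algebraic system above; uniqueness of the equilibrium thus amounts to uniqueness of its stabilizing solution. The step I expect to be the main obstacle is exactly this root selection: the quadratic relations governing $Q_1,Q_2$ admit non-stabilizing (and possibly non-diagonal) solutions, which would generate different candidate constants $c_i$ but fail to produce a mean-reverting closed loop with an invariant measure. Discarding them requires invoking admissibility together with the uniqueness-of-invariant-measure condition from Theorem \ref{thm:vt1}, and this is precisely the explicit manifestation of the non-uniqueness of $c_i$ highlighted in the text.
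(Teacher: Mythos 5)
Your proposal is correct and follows essentially the same route as the paper's proof: the ansatz $v_i(\mu)=[\mu]_{Q_i}$, substitution into \eqref{eq:HJB2}, coefficient matching to obtain the Riccati system (with the same observation that the $\gamma$-dependence cancels), selection of the stabilizing root via ergodicity of the closed-loop Ornstein--Uhlenbeck dynamics, and conclusion through Corollary \ref{cor:vt1} with $\mu_\infty^*=\mathcal N\big(0,\diag(\tfrac{\sqrt{r_1}}{2},\tfrac{\sqrt{r_2}}{2})\big)$. Your closing discussion of root selection and the resulting non-uniqueness of the candidate constants $c_i$ is exactly how the paper handles it (see \eqref{eq:c1c2} and the surrounding argument), so there is no substantive difference to report.
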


Before we present the proof of Proposition \ref{prop:LQ_ex1}, we make some remarks.
\begin{remark}
    \label{rem:LQ_ex1}
      Note that the cost functionals in both games \eqref{eq:NE} 
      and \eqref{eq:NE2} depend on the parameter $\gamma$.
      However, observing that
      \begin{equation}
        \label{eq:EEX}
        \mathbb E \left[ \gamma \mathbb E[|X_t|^2] + (1 - \gamma) |X_t|^2 \right]
        = \gamma \mathbb E[|X_t|^2] + (1 - \gamma) \mathbb E[|X_t|^2] = \mathbb E[|X_t|^2],
      \end{equation}
      it becomes evident that the Nash equilibrium $\alpha^*$, the ergodic constants $(\hat c_1, \hat c_2)$,
      and the value functions $V_i$ are all independent of $\gamma$, as confirmed by
      Proposition \ref{prop:LQ_ex1}. 
      Consequently, without loss of generality, one could set $\gamma = 0$ in both games
      \eqref{eq:NE} and \eqref{eq:NE2} and prove Proposition \ref{prop:LQ_ex1} 
      using standard dynamic programming principles and HJB equations 
      in the conventional sense, which yields $\bar V_i(x)$, and 
      then recovers $V_i(\mu_0)$ by integrating $\bar V_i$ against $\mu_0$.

      Nevertheless, we retain the parameter $\gamma$ in this example 
      to demonstrate how the probabilistic identity \eqref{eq:EEX} 
      translates into the structure of the associated $\gamma$-dependent Master equation 
      \eqref{eq:HJB2}, ultimately yielding a $\gamma$-independent solution, 
      see \eqref{eq:muI2} in the proof of Proposition \ref{prop:LQ_ex1}.

\end{remark}

\subsubsection{Proof of Proposition \ref{prop:LQ_ex1}}
\label{sec:proof_LQ_ex1}
The problem under consideration fits within the framework of 
 \eqref{eq:X} 
and the running cost function $\ell_i$ in \eqref{eq:cost} are given by 
\begin{equation*}
    b(\mu, x, a) = a,\ \sigma(\mu, x, a) = I_2, \ \text{ and }\ 
     \ell_i(\mu, x, a_i) = F(\mu, x) + r_i |a_i|^2 
\end{equation*}
for some $r_i > 0$, where 
\begin{equation*}
    F^\gamma(\mu, x) = \gamma [\mu]_{I_2} + (1-\gamma)|x|^2, \ \ \forall (\mu, x) \in \mathcal P_2(\mathbb R^2) \times \mathbb R^2.
\end{equation*}
Our goal is to apply Corollary \ref{cor:vt1} to 
find a Nash equilibrium for the game \eqref{eq:NE},
while examine the independence of the parameter $\gamma$ in the solution.

In this case, the Hamiltonian functions $H_i$ of \eqref{eq:Hi} can be computed explicitly: 
\begin{align*}
H_i (\mu, x, p) 
&= \inf_{a_i \in \mathbb R} \left\{ p a_i + \gamma [\mu]_{I_2} + (1-\gamma)|x|^2 + r_i |a_i|^2 \right\} \\
&= F^\gamma(\mu, x) - \frac{|p|^2}{4r_i},
\end{align*}
with infimum attained at $a_i^* = -\frac{p}{2r_i}$.

With the above settings, the HJB equations \eqref{eq:HJB2} can be rewritten as
\begin{equation}
    \label{eq:HJB3}
\begin{cases}
\displaystyle
\int_{\mathbb R^2} \bigg[ F^\gamma(\mu, x) - \frac{1}{4r_1} \left|\partial_{x_1} \frac{\delta v_1}{\delta \mu}(\mu, x)\right|^2 
+ \frac 1 2 \Delta \frac{\delta v_1}{\delta \mu}(\mu, x) \\ \displaystyle \hspace{1.5in}
+ \partial_{x_2} \frac{\delta v_1}{\delta \mu}(\mu, x) \left(-\frac{1}{2r_2} \partial_{x_2} \frac{\delta v_2}{\delta \mu}(\mu, x)\right) \bigg] \mu(dx) = c_1,
\\ \displaystyle
\int_{\mathbb R^2} \bigg[ F^\gamma(\mu, x) - \frac{1}{4r_2} \left|\partial_{x_2} \frac{\delta v_2}{\delta \mu}(\mu, x)\right|^2 
+ \frac 1 2 \Delta \frac{\delta v_2}{\delta \mu}(\mu, x) \\ \displaystyle \hspace{1.5in}
+ \partial_{x_1} \frac{\delta v_2}{\delta \mu}(\mu, x) \left(-\frac{1}{2r_1} \partial_{x_1} \frac{\delta v_1}{\delta \mu}(\mu, x)\right) \bigg] \mu(dx) = c_2.
\end{cases}
\end{equation}

Assume the value functions $v_i$ are of the form
\begin{equation}
    \label{eq:v_ansatz}
v_1(\mu) = [\mu]_{Q}, \qquad v_2(\mu) = [\mu]_{R},
\end{equation}
where $Q, R \in S_2(\mathbb{R})$ are to be determined. 
With this ansatz, the flat derivative satisfies:
\begin{equation}\label{eq:derivative1}
\frac{\delta v_1}{\delta \mu}(\mu,x) = x^\top Q x - [\mu]_{Q}, \qquad 
D_x \frac{\delta v_1}{\delta \mu}(\mu,x) = 2 Q x, \qquad
D_{xx} \frac{\delta v_1}{\delta \mu}(\mu,x) = 2 Q,
\end{equation}
and
\begin{equation}\label{eq:derivative2}
\frac{\delta v_2}{\delta \mu}(\mu,x) = x^\top R x - [\mu]_{R}, \qquad 
D_x \frac{\delta v_2}{\delta \mu}(\mu,x) = 2 R x, \qquad
D_{xx} \frac{\delta v_2}{\delta \mu}(\mu,x) = 2 R,
\end{equation}
Substituting \eqref{eq:derivative1} and \eqref{eq:derivative2} into the HJB equations \eqref{eq:HJB3}, we have
\begin{equation}
    \label{eq:HJB4}
\begin{cases}
\displaystyle
\int_{\mathbb R^2} \left(
\gamma [\mu]_{I_2} + (1-\gamma)|x|^2 - \frac{1}{r_1} (e_1^\top Q x)^2 + 
\tr(Q) -  \frac{2}{r_2} e_2^\top Q x e_2^\top R x \right) \mu(dx) = c_1, \\
\displaystyle
\int_{\mathbb R^2} \left(\gamma [\mu]_{I_2} + (1-\gamma)|x|^2 - \frac{1}{r_2} (e_2^\top R x)^2 + 
\tr(R) -  \frac{2}{r_1} e_1^\top R x e_1^\top Q x \right) \mu(dx) = c_2.
\end{cases}
\end{equation}

Note that 
\begin{equation}
    \label{eq:muI2}
\int_{\mathbb R^2} \left(\gamma [\mu]_{I_2} + (1-\gamma)|x|^2\right) \mu(dx) = \int_{\mathbb R^2} |x|^2 \mu(dx)= [\mu]_{I_2}.
\end{equation}
Therefore, \eqref{eq:HJB4} can be reduced as
\begin{equation}
    \label{eq:HJB5}
\begin{cases}
\displaystyle
[\mu]_{I_2} - \frac{1}{r_1} [\mu]_{(Q e_1)^{\otimes 2}} 
- \frac{2}{r_2} [\mu]_{(Q e_2) \otimes (R e_2)} + \tr(Q) = c_1, \\
\displaystyle
[\mu]_{I_2} - \frac{1}{r_2} [\mu]_{(R e_2)^{\otimes 2}}
- \frac{2}{r_1} [\mu]_{(R e_1) \otimes (Q e_1)} + \tr(R) = c_2.
\end{cases}
\end{equation}
It is worth noting that the above equations are independent to the parameter $\gamma$. 

Note also that $(Qe_2) \otimes (R e_2)$ is not a symmetric matrix, but we can symmetrize it by taking $\frac{1}{2}((Qe_2) \otimes (R e_2) + (R e_2) \otimes (Q e_2))$. 
Indeed,  denoting $\overline{A} = \frac 1 2 (A + A^\top)$ for any $A \in \mathbb R^{2\times 2}$, 
then $x^\top A x = x^\top \overline{A} x$ for any $x \in \mathbb R^2$. Therefore, we can rewrite \eqref{eq:HJB5} as
\begin{equation*}
\begin{cases}
\displaystyle
[\mu]_{I_2} - \frac{1}{r_1} [\mu]_{(Q e_1)^{\otimes 2}} 
- \frac{2}{r_2} [\mu]_{\overline{(Q e_2) \otimes (R e_2)}} + \tr(Q) = c_1, \\[2ex]
\displaystyle
[\mu]_{I_2} - \frac{1}{r_2} [\mu]_{(R e_2)^{\otimes 2}}
- \frac{2}{r_1} [\mu]_{\overline{(R e_1) \otimes (Q e_1)}} + \tr(R) = c_2.
\end{cases}
\end{equation*}

Comparing the coefficients of $[\mu]$ terms and the constant terms, we have the following Riccati system:
\begin{equation}
    \label{eq:Riccati-ex1}
\begin{cases}
\displaystyle
I_2 - \left(\frac{1}{r_1} \right) (Q e_1)^{\otimes 2} - \left(\frac{2}{r_2} \right)\overline{(Q e_2) \otimes (R e_2)} = 0, \\[2ex]
\displaystyle
I_2 - \left(\frac{1}{r_2} \right) (R e_2)^{\otimes 2} - \left(\frac{2}{r_1} \right) \overline{(R e_1) \otimes (Q e_1)} = 0, \\[2ex]
\displaystyle
\tr(Q) = c_1, \\
\displaystyle
\tr(R) = c_2.
\end{cases}
\end{equation}
Component-wise, we can rewrite the first two equations of \eqref{eq:Riccati-ex1} as 
\begin{align*} \begin{cases}
\frac{Q_{11}^2}{r_1} + \frac{2 Q_{12} R_{12}}{r_2} -1 = 0 \\[1ex]
\frac{Q_{12}^2}{r_1} + \frac{2Q_{22} R_{22}}{r_2} -1 =0\\[1ex]
\frac{Q_{11}Q_{12}}{r_{1}} + \frac{Q_{12}R_{22} + Q_{22}R_{12}}{r_{2}} =0\\[1ex]
\frac{R_{12}^{2}}{r_{2}}  + \frac{2R_{11} Q_{11}}{r_{1}} -1 =0\\[1ex]
\frac{R_{22}^{2}}{r_{2}} + \frac{2 R_{12} Q_{12}}{r_{1}} -1 =0\\[1ex]
\frac{R_{12} R_{22}}{r_{2} } + \frac{R_{11}Q_{12} +R_{12} Q_{11}}{r_{1}} =0. 
\end{cases}\end{align*} 

There exist multiple solutions to the above system is given by diagonal matrices
$$
Q = \begin{bmatrix}
    \pm \sqrt{r_{1}} & 0 \\
0 & \frac{\sqrt{r_{2}}}{2}
\end{bmatrix}, \ \ R = \begin{bmatrix}
\frac{\sqrt{r_{1}}}{2} & 0 \\
0 & \pm \sqrt{r_{2}}
\end{bmatrix}.
$$

In view of the last two equations of \eqref{eq:Riccati-ex1}, we have 
\begin{equation}
    \label{eq:c1c2}
    c_1 = Q_{11} + Q_{22} =\pm \sqrt{r_{1}} +  \frac{\sqrt{r_{2}}}{2}, \quad c_2 = R_{11} + R_{22} =\sqrt{r_{2}} \pm  \frac{\sqrt{r_{1}}}{2}.
\end{equation}
This also implies that nonuniqueness of $c_i$ of \eqref{eq:HJB2} is due to the nonuniqueness of the solution to the HJB equation \eqref{eq:Riccati-ex1}.
In addition, using the feedback controls
\begin{equation}
    \label{eq:a_star}
    \bar{\alpha}_1^*(\mu, x) = -\frac{1}{2r_1} \partial_{x_1} \frac{\delta v_1}{\delta \mu}(\mu, x) = -\frac{1}{2r_1} 2 (Q x)_1 
    = \mp \frac{1}{\sqrt{r_1}} x_1,
\end{equation}
and
\begin{equation}
    \label{eq:a_star2}
    \bar{\alpha}_2^*(\mu, x) = -\frac{1}{2r_2} \partial_{x_2} \frac{\delta v_2}{\delta \mu}(\mu, x) = -\frac{1}{2r_2} 2 (R x)_2 
    = \mp\frac{1}{\sqrt{r_2}} x_2,
\end{equation}
the controlled state process $X^{*}_t$ is \begin{align*}
   \begin{cases}
     dX^*_1(t)= \mp\frac{1}{\sqrt{r_1}}   X^*_1(t) dt + dW_1(t), \\
    dX^*_2(t)= \mp\frac{1}{\sqrt{r_2}}   X^*_2(t) dt + dW_2(t).
   \end{cases}
\end{align*} 
One can easily see that, only if the positive solution of $Q$ and $R$ is chosen 
for \eqref{eq:c1c2}, the state process $X^{*}_t$ is ergodic with  $\mathcal L(X^*_t)$ converging to an invariant measure $\pi^* = N(0, \Sigma)\in \mathcal P_2(\R^2)$ in $\bW_2(\R^2)$ as $t \to \infty$, 
where $\Sigma: = \diag(\frac{\sqrt{r_1}}{2}, \frac{\sqrt{r_2}}{2})$. 
This, in turn, implies that \begin{align*}
    \mathcal L(X^*_t, \bar{\alpha}^*(\mathcal L(X^*_t), X^*_t)) \to (\pi^*, \wdt\pi^*) \ \text{ in } \bW_2(\R^4),  
\end{align*} 
where $\wdt \pi^* = N(0, \wdt \Sigma)$ with $\wdt \Sigma := \diag(\frac{1}{2\sqrt{r_1}}, \frac{1}{2\sqrt{r_2}})$.
It is also important to note that the support of $\pi^*$ is $\mathbb R^2$.
Consequently, all conditions of Corollary \ref{cor:vt1} are satisfied. 
Moreover, the pair $(\bar{\alpha}_1^*, \bar{\alpha}_2^*)$ defined in \eqref{eq:a_star}--\eqref{eq:a_star2} 
constitute  a feedback form of the Nash equilibrium, 
the associated ergodic constants of \eqref{eq:NE} 
can be verified as 
\begin{align*}
    \hat J_1(\bar{\alpha}_1^*, \bar{\alpha}_2^*) = \sqrt{r_{1}} +  \frac{\sqrt{r_{2}}}{2}= c_1, 
    \ \ \hat J_2(\bar{\alpha}_1^*, \bar{\alpha}_2^*) = \sqrt{r_{2}} +  \frac{\sqrt{r_{1}}}{2}= c_2,
\end{align*} and the value functions of \eqref{eq:NE2} are given by
\begin{align*}
V_1(\mu_0) = [\mu_0]_{Q} - [\pi^*]_{Q} = [\mu_0]_Q - \frac{r_1}{2} - \frac{r_2}{4}, \quad 
V_2(\mu_0) = [\mu_0]_{R} - [\pi^*]_{R} = [\mu_0]_R - \frac{r_2}{2} - \frac{r_1}{4}.
\end{align*}
If we take $\mu_0 = \delta_x$ for some $x \in \mathbb R^2$, then
\begin{align*}
\bar V_1(x) = V_1(\delta_x) = \sqrt{r_1} x_1^2 + \frac{\sqrt{r_2}}{2} x_2^2 - \frac{r_1}{2} - \frac{r_2}{4}, \quad 
\bar V_2(x) = V_2(\delta_x) = \frac{\sqrt{r_1}}{2} x_1^2 + \sqrt{r_2} x_2^2 - \frac{r_2}{2} - \frac{r_1}{4},
\end{align*}
which 
recovers the standard LQG theory via value functions.

\subsection{An LQ problem with quadratic costs in measure} 
\label{sec:LQ-ex2}
In Section \ref{sec:LQ-ex1}, we analyzed a problem where the structure of the cost functionals allowed for a solution via classical arguments, bypassing the need for Master equations (see Remark \ref{rem:LQ_ex1}). In contrast, this section introduces an example where the cost functionals exhibit a quadratic dependence on the measure variable. This dependence renders the classical approach inapplicable, thereby necessitating the use of the Master equation framework developed earlier.
Consider the controlled diffusion process
\begin{equation}
    \label{eq:X_ex2}
        dX_t = \alpha_t dt + d W_t,
\end{equation}
with the cost function for player $i$ ($i=1,2$) defined as
\begin{equation}
    \label{eq:J_ex2}
    \hat J_i(\alpha_1, \alpha_2)
    = \lim_{T \to \infty} \frac{1}{T}
    \mathbb E \left[ \int_0^T \left( |X_{t}|^2
        + r_i |\alpha_{i,t}|^2 + 
        \left( \mathbb E \left[ \eta_i^\top X_t \right] \right)^2
    \right) dt \right],
\end{equation}
where $r_i \in \mathbb R$ and $\eta_i \in \mathbb R^2$ are given.
The corresponding cost functional of the auxiliary control problem is given by
\begin{equation} \label{eq:J_ex2_aux}
    J_i(\mu_0, \alpha) = \lim_{T \to \infty} \mathbb{E} 
    \left[ \int_0^T  \left( |X_{t}|^2
        + r_i |\alpha_{i,t}|^2 + 
        \left( \mathbb E \left[ \eta_i^\top X_t \right] \right)^2 - \hat c_i \right) dt \right], \quad i = 1, 2,
\end{equation}
where $\hat c_i$ are the ergodic constants to be determined.

The main objective is to identify a Nash equilibrium $\alpha^* = (\alpha_1^*, \alpha_2^*)$ and ergodic constants $(\hat c_1, \hat c_2)$ for the game problem defined by \eqref{eq:NE}, together with the corresponding value functions $V_i$ of the auxiliary control problem defined by \eqref{eq:NE2}.
The main result for this example is provided below via 
the following algebraic Riccati equations,
\begin{equation}
    \label{eq:Riccati}
\begin{cases}
I_2 - r_1^{-1} {(Q_1 e_1)^{\otimes 2}} - 2 r_2^{-1} \overline{(Q_1 e_2) \otimes (Q_2 e_2)} = 0, \\
I_2 - r_2^{-1} {(Q_2 e_2)^{\otimes 2}} - 2 r_1^{-1} \overline{(Q_2 e_1) \otimes (Q_1 e_1)} = 0, \\
\eta_1^{\otimes 2} - 2 r_1^{-1} \overline{(Q_1 e_1) \otimes (R_1 e_1)}
- r_1^{-1} {(R_1 e_1)^{\otimes 2}} 
\\ \hspace{.3in}
- 2 r_2^{-1} \overline{(Q_1 e_2) \otimes (R_2 e_2)} 
- 2 r_2^{-1} \overline{(Q_2 e_2) \otimes (R_1 e_2)} 
- 2 r_2^{-1} \overline{(R_1 e_2) \otimes (R_2 e_2)}
= 0, \\
\eta_2^{\otimes 2} - 2 r_2^{-1} \overline{(Q_2 e_2) \otimes (R_2 e_2)}
- r_2^{-1} {(R_2 e_2)^{\otimes 2}} 
\\  \hspace{.3in}
- 2 r_1^{-1} \overline{(Q_2 e_1) \otimes (R_1 e_1)} 
- 2 r_1^{-1} \overline{(Q_1 e_1) \otimes (R_2 e_1)} 
- 2 r_1^{-1} \overline{(R_1 e_1) \otimes (R_2 e_1)}
= 0, \\
- r_1^{-1}  q_1^\top e_1^{\otimes 2} (Q_1 + R_1) 
- r_2^{-1} q_2^\top e_2^{\otimes 2} (Q_1 + R_1) 
- r_2^{-1} q_1^\top e_2^{\otimes 2} (Q_2 + R_2) = 0, \\
- r_2^{-1} q_2^\top e_2^{\otimes 2} (Q_2 + R_2) 
- r_1^{-1} q_1^\top e_1^{\otimes 2} (Q_2 + R_2) 
- r_1^{-1} q_2^\top e_1^{\otimes 2} (Q_1 + R_1) = 0,\\
c_1 = -  \frac 1 4 r_1^{-1} |e_1^\top q_1|^2
          + \tr(Q_1) - \frac 1 2 r_2^{-1} q_1^\top e_2^{\otimes 2} q_2, \\
c_2 = - \frac 1 4 r_2^{-1} |e_2^\top q_2|^2
          +  \tr(Q_2) - \frac 1 2 r_1^{-1} q_2^\top e_1^{\otimes 2} q_1.
\end{cases}
\end{equation}
\begin{proposition}
  \label{prop:LQ_ex2}
  Consider the game problem defined through \eqref{eq:X_ex2}-\eqref{eq:J_ex2_aux}. 
  Assume $r_i>0$ for $i=1,2$.
  Suppose the Riccati system \eqref{eq:Riccati} admits a solution 
  $(Q_1, Q_2, R_1, R_2, q_1, q_2, c_1, c_2) \in S_2(\mathbb R)^4 \times 
  (\mathbb R^2)^2 \times \mathbb R^2$ such that there exists a positive constant $\e$ so that \begin{align}\label{eq:suff-cond-prop3}
     \lambda_{\min}(\overline Q) - \frac{\e}{2} > \frac{ |{R}|^2}{\e},
  \end{align} where $\overline Q = \frac{Q + Q^\top}{2}$ with \begin{align}\label{eq:Q-R_defn}
    Q = \frac{e_1^{\otimes 2} Q_1}{r_1} + \frac{e_2^{\otimes 2} Q_2}{r_2}, \quad \text{ and } \quad
    R = \frac{e_1^{\otimes 2} R_1}{r_1} + \frac{e_2^{\otimes 2} R_2}{r_2}.
  \end{align}
  Then, \begin{description}
    \item[(i)] the feedback control $\alpha^*$ generated by 
    \begin{equation}
      \label{eq:alpha_ex2}
      \bar{\alpha}_i^* (\mu, x) = - \frac{e_i^\top}{r_i} \left( Q_i x + R_i [\mu]_{\mathbf 1} + q_i/2 \right), 
      \quad i = 1, 2,
  \end{equation}  is a Nash equilibrium for the game problem defined by \eqref{eq:NE}, where $\mathbf 1 = [1, 1]^\top \in \mathbb R^2$;
  \item[(ii)] the associated ergodic constants are given by
  \begin{equation}
      \label{eq:c_ex2}
      \begin{cases}
      \hat c_1 = -  \frac 1 4 r_1^{-1} |e_1^\top q_1|^2
          + \tr(Q_1) - \frac 1 2 r_2^{-1} q_1^\top e_2^{\otimes 2} q_2, \\
      \hat c_2 = - \frac 1 4 r_2^{-1} |e_2^\top q_2|^2
          +  \tr(Q_2) - \frac 1 2 r_1^{-1} q_2^\top e_1^{\otimes 2} q_1;  
      \end{cases}
  \end{equation}
  \item[(iii)] furthermore, the value functions $V_i$ of \eqref{eq:NE2} are given by
  \begin{equation}
      \label{eq:V_ex2}
      V_i(\mu_0) =  v_i(\mu_0) - v_i(\mu_\infty^*), \quad i = 1, 2,
  \end{equation}
  where
  \begin{equation}
      \label{eq:vi_ex2}
      v_i(\mu) = [\mu]_{Q_i} + [\mu]_{q_i} + [\mu]^\top_{\mathbf 1} R_i [\mu]_{\mathbf 1}, \quad i = 1, 2,
  \end{equation} 
  and $\mu_\infty^*$ is the invariant measure of the controlled 
  state process $X_t^*$ under the feedback control $\bar{\alpha}^*$.
  \end{description}
  \end{proposition}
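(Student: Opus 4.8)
The plan is to invoke Corollary \ref{cor:vt1}, so the argument splits into three tasks: verifying its structural hypotheses, reducing the Master equation \eqref{eq:HJB2} to the algebraic system \eqref{eq:Riccati} through the polynomial ansatz \eqref{eq:vi_ex2}, and establishing that the closed-loop state process is admissible with a unique, full-support invariant measure. First, since $b(\mu,x,a)=a$ and $\sigma\equiv I_2$, the problem sits in the separable constant-diffusion setting of Assumption \ref{asm:1} with $\bar b\equiv 0$ and $\bar\sigma\equiv I_2$; moreover $\ell_i(\mu,x,a_i)=|x|^2+r_i|a_i|^2+[\mu]_{\eta_i}^2$ is strictly convex and superlinear in $a_i$ when $r_i>0$, so Assumption \ref{asm:2} holds, and the local Lipschitz and quadratic growth bound (Assumption \ref{asm:ell}) follows exactly as in Example \ref{ex:1}. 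A one-line computation gives the Hamiltonian $H_i(\mu,x,p)=|x|^2+[\mu]_{\eta_i}^2-p^2/(4r_i)$ with minimizer $-p/(2r_i)$.

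Second, I would compute the flat derivatives of the ansatz. Writing $m:=[\mu]_{\mathbf 1}=\int_{\mathbb R^2}x\,\mu(dx)$, the blocks $[\mu]_{Q_i}$, $[\mu]_{q_i}$ and $m^\top R_i m$ have $x$-gradients $2Q_i x$, $q_i$ and $2R_i m$, whence $D_x\frac{\delta v_i}{\delta\mu}(\mu,x)=2Q_i x+q_i+2R_i[\mu]_{\mathbf 1}$ and $D_{xx}\frac{\delta v_i}{\delta\mu}(\mu,x)=2Q_i$. These are Lipschitz in $x$ and bounded, so Assumption \ref{asm:vi} holds, and plugging $p=\partial_{x_i}\frac{\delta v_i}{\delta\mu}=e_i^\top(2Q_ix+q_i+2R_i m)$ into $-p/(2r_i)$ recovers the feedback law \eqref{eq:alpha_ex2}. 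Substituting these derivatives into \eqref{eq:HJB2} and integrating against $\mu$, every term becomes a degree-$\le 2$ polynomial in $\mu$, expressible through the second moment $\int xx^\top\mu(dx)$, the mean $m$, and the constant $1$. These functionals are linearly independent over $\mathcal P_2(\mathbb R^2)$ (testing, e.g., on symmetric two-point measures separates the second-moment part from the mean-quadratic part), so I would match coefficients by type: the pure second-moment matrices yield the first two equations of \eqref{eq:Riccati}; the mean-quadratic matrices, into which the cost term $[\mu]_{\eta_i}^2=m^\top\eta_i^{\otimes 2}m$ injects the $\eta_i^{\otimes 2}$, yield the third and fourth; the linear-in-mean vectors yield the two vector equations; and the residual constants yield $c_i$ as in \eqref{eq:c_ex2}. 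The symmetrizations $\overline{A\otimes B}$ appear because each quadratic form is integrated against $\mu$, so only the symmetric part contributes.

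Third, for admissibility I would examine the closed-loop McKean--Vlasov SDE, which by \eqref{eq:Q-R_defn} takes the form $dX_t^*=-(QX_t^*+R\,m_t+\kappa)\,dt+dW_t$ with $m_t=\mathbb E[X_t^*]$ and $\kappa_i=\frac{1}{2r_i}e_i^\top q_i$. Taking expectations shows the mean obeys the linear ODE $\dot m_t=-((Q+R)m_t+\kappa)$, while the centred process $Y_t:=X_t^*-m_t$ solves the autonomous Ornstein--Uhlenbeck equation $dY_t=-QY_t\,dt+dW_t$. A one-sided estimate on the drift, splitting the measure-coupling cross term by Young's inequality as $2|R|\,ab\le \e a^2+|R|^2\e^{-1}b^2$ and bounding the mean difference by $\mathbb W_2(\mu_1,\mu_2)$, shows that condition \eqref{eq:suff-cond-prop3} supplies the strict dissipativity needed to apply \cite[Theorem 3.1]{Wang18}; this yields $\mathbb W_2$-convergence of $\mathcal L(X_t^*)$ to a Gaussian invariant measure $\mu_\infty^*=N(m_\infty,\Sigma_\infty)$, with $m_\infty=-(Q+R)^{-1}\kappa$ and $\Sigma_\infty$ solving the Lyapunov equation $Q\Sigma_\infty+\Sigma_\infty Q^\top=I_2$. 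Nondegeneracy makes the support of $\mu_\infty^*$ all of $\mathbb R^2$, securing the uniqueness-of-invariant-measure hypothesis, and affinity of $\bar\alpha^*$ gives joint $\mathbb W_2(\mathbb R^4)$-convergence of $\mathcal L(X_t^*,\alpha_t^*)$, so $\alpha^*\in\mathcal A$. With every hypothesis verified, Corollary \ref{cor:vt1} delivers (i)--(iii) at once. I expect the principal obstacle to be the coefficient-matching bookkeeping of the second task: the quadratic measure dependence generates many mean--mean and mean--state cross terms across both coupled equations, and assigning each to the correct symmetrized entry of \eqref{eq:Riccati} without sign or factor errors is where the real care is needed.
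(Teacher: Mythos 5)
Your proposal is correct and follows essentially the same route as the paper's proof: verify the hypotheses of Corollary \ref{cor:vt1}, compute the flat derivatives of the ansatz \eqref{eq:vi_ex2} and match coefficients by type (second moment, mean-quadratic, linear-in-mean, constant) to obtain \eqref{eq:Riccati} and \eqref{eq:c_ex2}, and then establish admissibility through the same one-sided dissipativity estimate --- Young's inequality with $\e$ plus condition \eqref{eq:suff-cond-prop3} --- feeding into \cite[Theorem 3.1]{Wang18}, with full support of $\mu_\infty^*$ and affinity of $\bar{\alpha}^*$ giving uniqueness of the invariant measure and joint $\mathbb W_2(\R^4)$-convergence. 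Your additional decomposition of the closed-loop dynamics into the mean ODE and a centred Ornstein--Uhlenbeck process (yielding the explicit Gaussian form of $\mu_\infty^*$) is a correct refinement the paper reserves for the explicit example in Section \ref{sec:ex_explicit}, but it does not change the logical structure of the argument.
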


\begin{remark}
    \label{rem:Riccati_ex2}
The key distinction between Proposition \ref{prop:LQ_ex1} and Proposition \ref{prop:LQ_ex2} lies in the structure of the value functions, specifically the choice of ansatz. 
    In the classical LQG setting where the state space is finite-dimensional (e.g., $\mathbb R^2$), the value function is typically a quadratic polynomial in the state variable $x$: 
    $$v_i(x) = x^\top Q_i x + q_i^\top x + r_i, \quad i = 1, 2,$$
    for some $Q_i \in S_2(\mathbb R)$, $q_i \in \mathbb R^2$, and $r_i \in \mathbb R$.
    Counting the degrees of freedom for each player, $Q_i$ contributes 3 unknowns, $q_i$ contributes 2, and $r_i$ contributes 1, totaling 6 unknowns per player. Consequently, solving the game would require a system of 12 algebraic equations to determine these 12 unknowns.

Returning to the example in Proposition \ref{prop:LQ_ex1}, one might expect the value function to be linear in the measure variable, suggesting an ansatz of the form $v_i(\mu) = \int_{\mathbb R^2} f_i(x) \mu(dx)$ for some unknown function $f_i \in C^2(\mathbb R^2; \mathbb R)$. However, since $C^2(\mathbb R^2; \mathbb R)$ is an infinite-dimensional space, solving for $f_i$ directly would generally lead to an intractable system.
To make the problem tractable, we restrict $f_i$ to be a quadratic function in $x$, consistent with the structure of classical LQG problems.
Specifically, one could propose the ansatz
$$v_i(\mu) = [\mu]_{Q_i} + [\mu]_{q_i} + c_i, \quad i = 1, 2.$$
Note that the constant term $c_i$ is irrelevant because the Master equation \eqref{eq:HJB2} involves only the flat derivative of $v_i$. Furthermore, in Proposition \ref{prop:LQ_ex1}, we set $q_i = 0$ to further reduce complexity, leading to the ansatz \eqref{eq:v_ansatz}. Consequently, the resulting Riccati system \eqref{eq:Riccati-ex1} involves only a small number of algebraic equations.

In the context of the current setting \eqref{eq:X_ex2}--\eqref{eq:J_ex2_aux}, one might ask how many equations are required for the Riccati system \eqref{eq:Riccati}.
Based on the intuition that the value functions $v_i(\mu)$ should be quadratic in the measure variable and that $v_i(\delta_x)$ should be a quadratic function in $x \in \mathbb{R}^2$, we adopt the ansatz \eqref{eq:vi_ex2}.
Consequently, for each $i$, $Q_i \in S_2(\mathbb{R})$ contributes 3 unknowns, $q_i \in \mathbb{R}^2$ contributes 2 unknowns, and $R_i \in S_2(\mathbb{R})$ contributes another 3 unknowns.
In total, this results in 16 unknowns for the two players, matching the 16 equations present in \eqref{eq:Riccati}.
It is worth noting that alternative ansatz choices exist, and the selection is crucial for the solvability of the Riccati system;
an ansatz with too few or too many unknowns may lead to an ill-posed system.
Given these complexities, an example with explicit solutions to \eqref{eq:Riccati} is essential.
Accordingly, we present the proof of Proposition \ref{prop:LQ_ex2} below, followed by an example detailing explicit solutions to \eqref{eq:Riccati}.
\hfill $\square$
\end{remark}

\subsubsection{Proof of Proposition \ref{prop:LQ_ex2}}
The proof of Proposition \ref{prop:LQ_ex2} follows by applying Corollary \ref{cor:vt1}.
We first rewrite the HJB equations \eqref{eq:HJB2} for the current setting. In particular, using the ansataz \eqref{eq:vi_ex2}, we derive the algebraic Riccati system \eqref{eq:Riccati}. Next, we verify the conditions of Corollary \ref{cor:vt1} to conclude the proof.

The running cost of \eqref{eq:J_ex2} can be rewritten as
$\ell_i(\mu_t, X_t, \alpha_{i,t})$ with 
\begin{equation*}
    \ell_i(\mu, x, a_i) = |x|^2 + r_i |a_i|^2 + [\mu]_{\eta_i}^2,
\end{equation*}
for $i = 1, 2$.
In this case, the Hamiltonian functions $H_i$ of \eqref{eq:Hi} can 
be computed explicitly.
\begin{align*}
H_i (\mu, x, p)
&= \inf_{a_i \in \mathbb R} \left\{ p a_i + |x|^2 + r_i |a_i|^2 + [\mu]_{\eta_i}^2 \right\} \\
&= |x|^2 + [\mu]_{\eta_i}^2 - \frac{|p|^2}{4r_i},
\end{align*}
with infimum attained at $$a_i^* = -\frac{p}{2r_i}.$$
With the above settings, the HJB equations \eqref{eq:HJB2} can be rewritten as
\begin{equation}
    \label{eq:HJB7}
\begin{cases}
\displaystyle
\int_{\mathbb R^2} \bigg( |x|^2 + [\mu]_{\eta_1}^2 - \frac{1}{4r_1} \left|\partial_{x_1} \frac{\delta v_1}{\delta \mu}(\mu, x)\right|^2 
+ \frac 1 2 \Delta \frac{\delta v_1}{\delta \mu}(\mu, x) \\
\hspace{1.5in} \displaystyle + \partial_{x_2} \frac{\delta v_1}{\delta \mu}(\mu, x) \left(-\frac{1}{2r_2} 
\partial_{x_2} \frac{\delta v_2}{\delta \mu}(\mu, x)\right) \bigg) \mu(dx) = c_1,
\\ \displaystyle
\int_{\mathbb R^2} \bigg( |x|^2 + [\mu]_{\eta_2}^2 - \frac{1}{4r_2    
} \left|\partial_{x_2} \frac{\delta v_2}{\delta \mu}(\mu, x)\right|^2 
+ \frac 1 2 \Delta \frac{\delta v_2}{\delta \mu}(\mu, x) \\
\hspace{1.5in} \displaystyle + \partial_{x_1} \frac{\delta v_2}{\delta \mu}(\mu, x) \left(-\frac{1}{2r_1} 
\partial_{x_1} \frac{\delta v_1}{\delta \mu}(\mu, x)\right) \bigg) \mu(dx) = c_2.
\end{cases}
\end{equation}
and the feedback form of the optimal controls are given by, if the HJB is solved,
\begin{equation}
    \label{eq:a_star3}
    \bar{\alpha}_1^*(\mu, x) = -\frac{1}{2r_1} \partial_{x_1} \frac{\delta v_1}{\delta \mu}(\mu, x), \quad
    \bar{\alpha}_2^*(\mu, x) = -\frac{1}{2r_2} \partial_{x_2} \frac{\delta v_2}{\delta \mu}(\mu, x).
\end{equation}
We propose the following ansatz  by its {\it lift} version of 
the value function    $v_i(\mu)$ given in \eqref{eq:vi_ex2}:
\begin{equation*} 
    v_i(\mu) = \mathbb E [X^\top Q_i X] + \E[X]^\top R_i \E[X] + \E[X]^\top q_i,
\end{equation*} 
where $X=(X_1, X_2)^\top$ is a random variable with distribution $\mu$, 
$Q_i, R_i \in S^2(\mathbb R)$ and $q_i \in \mathbb R^2$ are to be determined.
As mentioned in Remark \ref{rem:Riccati_ex2}, the basic idea of the ansatz is to capture the quadratic polynomial structure of
the value function $v_i(\mu)$ in terms of $\mu$, as well as
the quadratic structure of $v_i(\delta_x)$ in terms of $x$. 

Next, we   derive the related derivatives of $v_i$. 
Note that 
$$\frac{\delta}{\delta \mu} \left( \E[X^\top Q_i X] \right) (\mu, x) = x^\top Q_i x - \E[X^\top Q_i X],$$
$$\frac{\delta}{\delta \mu} \left( \E[X]^\top R_i \E[X] \right) (\mu, x) = 2 \E[X]^\top R_i x - 2 \E[X]^\top R_i \E[X],$$
and 
$$\frac{\delta}{\delta \mu} \left( \E[X]^\top q_i \right) (\mu, x) = x^\top q_i - \E[X]^\top q_i.$$
Therefore we have
\begin{equation}
    \label{eq:derivative_v}
    \frac{\delta v_i}{\delta \mu}(\mu, x) = x^\top Q_i x + 
    (2 \E[X]^\top R_i + q_i^\top) x - C_i,
\end{equation}
where $C_i$ is a constant term given by
$$C_i = \E[X^\top Q_i X] + 2 \E[X]^\top R_i \E[X] + \E[X]^\top q_i,$$
which will be irrelevant at the final form of the Riccati system.
We also need the following derivatives:
\begin{align*}
    D_x \frac{\delta v_i}{\delta \mu}(\mu, x) &= 2 Q_i x + 2 R_i \E[X] + q_i, \\
    D_{xx} \frac{\delta v_i}{\delta \mu}(\mu, x) &= 2 Q_i.
\end{align*}

Combining the above derivatives with the form of the optimal feedback controls in
\eqref{eq:a_star3}, we have
\begin{equation}
    \label{eq:a_star4}
    \begin{cases}
        \bar{\alpha}_1^*(\mu, x) = \displaystyle -\frac{e_1^\top}{2r_1} \left( 2 Q_1 x + 2 R_1 \E[X] + q_1 \right), \\
        \bar{\alpha}_2^*(\mu, x) = \displaystyle-\frac{e_2^\top}{2r_2} \left( 2 Q_2 x + 2 R_2 \E[X] + q_2 \right).
    \end{cases}
\end{equation}

Substituting \eqref{eq:derivative_v} and \eqref{eq:a_star4} 
 into the HJB equations \eqref{eq:HJB7}, we can derive a system of equations for $Q_i, R_i, q_i$ and $c_i$:
\begin{align*}
&\int_{\mathbb R^2} \bigg( |x|^2 + [\mu]_{\eta_1}^2 - \frac{1}{4r_1} \left|e_1^\top \left(2 Q_1 x + 2 R_1 \E[X] + q_1 \right) \right|^2 + \tr(Q_1) \\
&\hspace{1.1in}  -\frac{1}{2r_2}  e_2^\top  \left(2 Q_1 x + 2 R_1 \E[X] + q_1\right) e_2^\top \left( 2 Q_2 x + 2 R_2 \E[X] + q_2\right) \bigg)
\mu(dx) = c_1, \\
&\int_{\mathbb R^2} \bigg( |x|^2 + [\mu]_{\eta_2}^2 - \frac{1}{4r_2} \left|e_2^\top \left(2 Q_2 x + 2 R_2 \E[X] + q_2 \right) \right|^2 + \tr(Q_2) \\
&\hspace{1.1in}  -\frac{1}{2r_1}  e_1^\top  \left(2 Q_2 x + 2 R_2 \E[X] + q_2\right) e_1^\top \left( 2 Q_1 x + 2 R_1 \E[X] + q_1\right) \bigg)
\mu(dx) = c_2.
\end{align*}

Using the facts that 
$$\int_{\mathbb R^2} |x|^2 \mu(dx) = \E[|X|^2], \quad 
[\mu]_{\eta_i}^2 = \mathbb E[X^\top] \eta_i^{\otimes 2} \mathbb E[X],$$
we can further simplify the above equations as follows:
\begin{align*}
&\E [ |X|^2 ] + \mathbb E [X^\top] \eta_1^{\otimes 2} \mathbb E[X] - \frac{1}{4r_1} \E \left[ \left|e_1^\top \left(2 Q_1 X + 2 R_1 \E[X] + q_1 \right) \right|^2 \right] + \tr(Q_1) \\
&\hspace{1in}  -\frac{1}{2r_2}  \E \left[ e_2^\top  \left(2 Q_1 X + 2 R_1 \E[X] + q_1\right)
e_2^\top \left( 2 Q_2 X + 2 R_2 \E[X] + q_2\right) \right] = c_1, \\
&\E [ |X|^2 ] + \mathbb E [X^\top] \eta_2^{\otimes 2} \mathbb E[X] - \frac{1}{4r_2} \E \left[ \left|e_2^\top \left(2 Q_2 X + 2 R_2 \E[X] + q_2 \right) \right|^2 \right] + \tr(Q_2) \\
&\hspace{1in}  -\frac{1}{2r_1}  \E \left[ e_1^\top  \left(2 Q_2 X + 2 R_2 \E[X] + q_2\right) e_1^\top \left( 2 Q_1 X + 2 R_1 \E[X] + q_1\right) \right] = c_2.
\end{align*}
Expanding the above equations, we can further simplify them as follows:
\begin{align*}
\E [ |X|^2 ] &+ \mathbb E [X^\top] \eta_1^{\otimes 2} \mathbb E[X] - 
\frac{1}{r_1} \E \left[ X^\top {(Q_1 e_1)^{\otimes 2}} X \right] 
\\ &  
- \frac{1}{r_1} e_1^\top Q_1 \E[X] e_1^\top \left(2 R_1 \E[X] + q_1\right) 
- \frac{1}{4r_1} \left| e_1^\top (2 R_1 \E[X]  + q_1) \right|^2 + \tr(Q_1) \\
&   
-\frac{2}{r_2} \E \left[ X^\top \overline{(Q_1 e_2) \otimes (Q_2 e_2)} X \right] - \frac{1}{r_2} e_2^\top Q_1 \E[X] e_2^\top \left(2 R_2 \E[X] + q_2\right) 
\\ & 
- \frac{1}{r_2} e_2^\top Q_2 \E[X] e_2^\top \left(2 R_1 \E[X] + q_1 \right) 
- \frac{1}{2 r_2} e_2^\top (2 R_1 \E[X] + q_1) e_2^\top \left(2 R_2 \E[X] + q_2\right) = c_1, 
\end{align*}
and 
\begin{align*}
\E [ |X|^2 ]& + \mathbb E [X^\top] \eta_2^{\otimes 2} \mathbb E[X] - 
\frac{1}{r_2} \E \left[ X^\top {(Q_2 e_2)^{\otimes 2}} X \right]
\\ & 
- \frac{1}{r_2} e_2^\top Q_2 \E[X] e_2^\top \left(2 R_2 \E[X] + q_2\right) 
- \frac{1}{4r_2} \left| e_2^\top (2 R_2 \E[X]  + q_2) \right|^2 + \tr(Q_2) \\
&  
-\frac{2}{r_1} \E \left[ X^\top \overline{(Q_2 e_1) \otimes (Q_1 e_1)} X \right] - \frac{1}{r_1} e_1^\top Q_2 \E[X] e_1^\top \left(2 R_1 \E[X] + q_1\right) 
\\ & 
- \frac{1}{r_1} e_1^\top Q_1 \E[X] e_1^\top \left(2 R_2 \E[X] + q_2 \right) 
- \frac{1}{2 r_1} e_1^\top (2 R_1 \E[X] + q_1) e_1^\top \left(2 R_2 \E[X] + q_2\right) = c_2.
\end{align*}

To obtain the desired Riccati system, we need to match the coefficients of following like terms in the above two equations:
$$
\E [X^\top \Box X], \quad \E[X]^\top \Box \E[X], \quad \text{ and } \quad \E[X]^\top \Box, \quad \Box.
$$
To solve for $(Q_1, Q_2)$, combining like terms of $\E[X^\top \Box X]$, it yields the following equations:
\begin{align*}
& I_2 - r_1^{-1} {(Q_1 e_1)^{\otimes 2}} - 2 r_2^{-1} \overline{(Q_1 e_2) \otimes (Q_2 e_2)} = 0, \\
& I_2 - r_2^{-1} {(Q_2 e_2)^{\otimes 2}} - 2 r_1^{-1} \overline{(Q_2 e_1) \otimes (Q_1 e_1)} = 0.
\end{align*}
For $(R_1, R_2)$, combining like terms of $\E[X]^\top \Box \E[X]$, 
we solve the following equations:
\begin{align*}
&\eta_1^{\otimes 2} - 2 r_1^{-1} \overline{(Q_1 e_1) \otimes (R_1 e_1)}
- r_1^{-1} {(R_1 e_1)^{\otimes 2}} - 
2 r_2^{-1} \overline{(Q_1 e_2) \otimes (R_2 e_2)} 
\\ & \hspace{1.5in}
- 2 r_2^{-1} \overline{(Q_2 e_2) \otimes (R_1 e_2)} 
- 2 r_2^{-1} \overline{(R_1 e_2) \otimes (R_2 e_2)}
= 0, \\
&\eta_2^{\otimes 2} - 2 r_2^{-1} \overline{(Q_2 e_2) \otimes (R_2 e_2)}
- r_2^{-1} {(R_2 e_2)^{\otimes 2}} - 
2 r_1^{-1} \overline{(Q_2 e_1) \otimes (R_1 e_1)} 
\\ & \hspace{1.5in}
- 2 r_1^{-1} \overline{(Q_1 e_1) \otimes (R_2 e_1)} 
- 2 r_1^{-1} \overline{(R_1 e_1) \otimes (R_2 e_1)}
= 0.
\end{align*}
Now, combining like terms of $\Box \E[X] $, we have
\begin{align*}
&- r_1^{-1}  q_1^\top e_1^{\otimes 2} (Q_1 + R_1) 
- r_2^{-1} q_2^\top e_2^{\otimes 2} (Q_1 + R_1) 
- r_2^{-1} q_1^\top e_2^{\otimes 2} (Q_2 + R_2) = 0, \\
&- r_2^{-1} q_2^\top e_2^{\otimes 2} (Q_2 + R_2) 
- r_1^{-1} q_1^\top e_1^{\otimes 2} (Q_2 + R_2) 
- r_1^{-1} q_2^\top e_1^{\otimes 2} (Q_1 + R_1) = 0.
\end{align*}
Lastly, we compare the constant terms or 
the like terms of $\Box$ to obtain the equations for $c_i$: 
\begin{align*}
& -  r_1^{-1} |e_1^\top q_1|^2
+ 4 \tr(Q_1) -
2 r_2^{-1} q_1^\top e_2^{\otimes 2} q_2 = 4 c_1, \\
& -  r_2^{-1} |e_2^\top q_2|^2
+ 4 \tr(Q_2) - 2 r_1^{-1} q_2^\top e_1^{\otimes 2} q_1 = 4 c_2.
\end{align*}
Combining these equations,  we obtain the system of Riccati equations \eqref{eq:Riccati} 
for $(Q_1, Q_2,$ $ R_1, R_2, q_1, q_2)$ and 
the expressions for  $(c_1, c_2)$ given in \eqref{eq:c_ex2}. 

Now let $(Q_1, Q_2,$ $ R_1, R_2, q_1, q_2, c_1, c_2)$ be a solution to \eqref{eq:Riccati} and let $\alpha^*$ be the feedback control given in \eqref{eq:alpha_ex2}. We verify the conditions of Corollary \ref{cor:vt1} to conclude the proof. Obviously, the value functions  $v_i(\mu), i=1,2$ satisfy \eqref{eq:chain_condition}. Using the linear form of $\bar{\alpha}_i^*$ given in \eqref{eq:alpha_ex2}, it is easy to see that \eqref{eq:chain_condition2} holds as well.  

It remains to show that the controlled process $X^*$ corresponding 
to 
$\alpha^*$ of \eqref{eq:alpha_ex2} converges in $\mathbb W_2(\R^2)$ to some invariant measure $\mu^*_\infty$  under  the conditions that \eqref{eq:Riccati} is solvable and that \eqref{eq:suff-cond-prop3} holds. To this end, similar to the calculations in the proof of Lemma \ref{lem:nonempty}, we will   show that there exist positive constants $K_1 > K_2$ so that  \begin{align}\label{eq2-drift-cond}
    (x-y)^\top (\bar{\alpha}^*(\mu,x)-\bar{\alpha}^*(\nu,y)) \le -K_1 |x-y|^2 + K_2\mathbb W_2^2(\mu, \nu),\   \forall x,y\in \R^2 \text{ and } \mu, \nu \in \mathcal P_2(\R^2). 
  \end{align}  
  Then the convergence of $\mathcal L(X_t^*) \to \mu^*_\infty $ in $\mathbb W_2(\R^2)$ 
  follows from \cite[Theorem 3.1]{Wang18}.   
  Using the form of $\bar{\alpha}^*$ in \eqref{eq:alpha_ex2}, the left-hand side of  \eqref{eq2-drift-cond} can be written as \begin{align*}
    -(x-y)^\top Q 
    (x-y)  -  (x-y)^\top R 
    ([\mu]_{\mathbf 1}-[\nu]_{\mathbf 1}). 
  \end{align*} 
   Also note that $$|[\mu]_{\mathbf 1}-[\nu]_{\mathbf 1}| = \bigg|\int \mathbf 1^\top (x-y) \pi(dx, dy)\bigg|  \le \sqrt 2 \bigg(\int |x-y|^2\pi(dx, dy)\bigg)^{1/2}, $$ where $\pi$ is an arbitrary coupling of $\mu$ and $\nu$. Thus $|[\mu]_{\mathbf 1}-[\nu]_{\mathbf 1}|\le \sqrt 2 \mathbb W_2( \mu, \nu)$. 
  Then we have \begin{align*}
    -  (x-y)^\top R([\mu]_{\mathbf 1}-[\nu]_{\mathbf 1}) &\le  |x-y| |{R}| \sqrt 2 \mathbb W_2(\mu, \nu) \\
    &\le \frac{\e}{2} |x-y|^2 + \frac{ |{R}|^2}{\e} \mathbb W_2^2(\mu, \nu), 
  \end{align*} where the second inequality follows from  Young's inequality with $\e$ being the positive constant given in the statement of Proposition \ref{prop:LQ_ex2}. Likewise, since 
  $\overline Q: = \frac{Q + Q^\top}{2}$ is symmetric, we can compute \begin{align*}
    -(x-y)^\top Q(x-y) = - (x-y)^\top \overline{Q} (x-y) \le - \lambda_{\min}(\overline Q) |x-y|^2.
  \end{align*} Using these observations, we have \begin{align*}
     (x-y)^\top (\bar{\alpha}^*(\mu,x)-\bar{\alpha}^*(\nu,y)) \le -\bigg( \lambda_{\min}(\overline Q) - \frac{\e}{2}\bigg)|x-y|^2 + \frac{|{R}|^2}{\e} \mathbb W_2^2(\mu, \nu).
  \end{align*}  
  In view of the assumption \eqref{eq:suff-cond-prop3}, we can choose $K_1 = \lambda_{\min}(\overline Q) - \frac{\e}{2} >0$ and $K_2 = \frac{|{R}|^2}{\e} >0$ so that $K_1 > K_2$. 
  Additionally, the support of $\mu_\infty^*$ is the whole $\R^2$ since $X^*$ is a mean-reverting diffusion process with non-degenerate noise.
  This completes the verification of \eqref{eq2-drift-cond} and thus establishes the existence of an invariant measure $\mu_\infty^*$;  which together with \eqref{eq:alpha_ex2} imply that $\mathcal L(X_t^*, \alpha^*_t) $ converges in $\mathbb W_2(\R^4)$ as well.

Finally, by Corollary \ref{cor:vt1}, we conclude that $\alpha^*$ given in \eqref{eq:alpha_ex2} is a Nash equilibrium
with value functions given in \eqref{eq:V_ex2} and ergodic constants given in \eqref{eq:c_ex2}. 
This completes the proof of Proposition \ref{prop:LQ_ex2}.

\begin{example}
    We illustrate the above result with a numerical example. In \eqref{eq:J_ex2},  let $r_1 =1,  r_2 = 1.5$, and \begin{align*}\eta_1 =\begin{bmatrix}1\\ -0.15\end{bmatrix}, \quad \eta_2 = \begin{bmatrix} 0.2 \\ 1.2 \end{bmatrix}. \end{align*} Solving the Riccati system \eqref{eq:Riccati} numerically, we obtain
    \begin{align*}
        Q_1 = \begin{bmatrix}
        1 & 0 \\
        0 & \frac{\sqrt6}{4}
        \end{bmatrix}, \quad Q_2 = \begin{bmatrix}
        \frac12 & 0 \\
        0 & \frac{\sqrt6}{2}
        \end{bmatrix}, \quad q_1 = \begin{bmatrix} 0 \\ 0 \end{bmatrix}, \quad q_2 = \begin{bmatrix} 0 \\ 0 \end{bmatrix},
    \end{align*}
    and 
    \begin{align*}
        R_1 = \begin{bmatrix}
        0.5611557350 & -0.0769592715 \\
         -0.0769592715 & -0.2135902843
        \end{bmatrix}, \quad R_2 = \begin{bmatrix}
        -0.1828483919 & 0.1117181956 \\
        0.1117181956 &  0.6921092141
        \end{bmatrix}.
    \end{align*}    
    Then we can find  the matrices $Q, R$ as in \eqref{eq:Q-R_defn} and compute the minmum eigenvalue of $Q$ and the norm of $R$ to be  \begin{align*}
        \lambda_{\min}(Q) = 0.7453559925, \quad |R| = 0.4674876586.
    \end{align*} 
    Using $\varepsilon = 0.6611273870$, we can verify that the condition \eqref{eq:suff-cond-prop3} holds:
    \begin{align*}
       \lambda_{\min}(Q) - \frac{\varepsilon}{2} - \frac{|R|^2}{\varepsilon} =  0.0842286055> 0. 
     \end{align*} Therefore, by Proposition \ref{prop:LQ_ex2}, the feedback control $\alpha^*$ given in \eqref{eq:alpha_ex2} is a Nash equilibrium for this game problem. In addition, using \eqref{eq:c_ex2}, the associated ergodic constants are given by \begin{align*}
        \hat c_1 = 1.612372435695794, \quad \hat c_2 = 1.724744871391589;
    \end{align*} the associated value functions $V_i$ of \eqref{eq:NE2} can be computed via \eqref{eq:V_ex2}. 
\end{example}

\begin{remark}
    The sufficient condition \eqref{eq:suff-cond-prop3} for the existence of invariant measure $\mu_\infty^*$ is a rather conservative one, partly due to the fact that it involves a complex interplay between the matrices $Q_1, Q_2, R_1, R_2$. In fact, in the example presented in the next subsection, in which we can derive the explicit forms of these matrices,  we can verify the existence of an invariant measure $\mu_\infty^*$ directly without using \eqref{eq:suff-cond-prop3}. 
    More discussions on the existence of invariant measures is provided in 
    Section \ref{sec:conclusion} as a future research direction.
\end{remark}

\subsubsection{An example with explicit solutions to the Riccati system}
\label{sec:ex_explicit}

We now provide an example 
in which the Riccati system \eqref{eq:Riccati} can be solved explicitly.
\begin{proposition}
  \label{prop:LQ_ex2_example}
  Consider the game problem defined through \eqref{eq:X_ex2}-\eqref{eq:J_ex2_aux} with 
  $r_i>0$ for $i=1,2$. 
  In addition, assume $\eta_1,\eta_2$ satisfy 
  \begin{equation}
    \label{eq:params_special}
  \eta_{1,1} \neq 0, \quad  \eta_{2,2} \neq 0
  ,\quad  \eta_{1,2} = \eta_{2,1} =0.
  \end{equation} 
  Then, the Riccati system \eqref{eq:Riccati} admits a solution 
  $(Q_1, Q_2, R_1, R_2, q_1, q_2) \in S_2(\mathbb R)^4 \times 
  (\mathbb R^2)^2$ given by
  \begin{equation}
    \label{eq:Q1Q2}
Q_1 = \begin{bmatrix}
\sqrt{r_1} & 0 \\
0 & \frac{\sqrt{r_2}}{2}
\end{bmatrix}, \ \ Q_2 = \begin{bmatrix}
\frac{\sqrt{r_1}}{2} & 0 \\
0 & \sqrt{r_2}
\end{bmatrix}, 
\quad 
q_1 = q_2 = \begin{bmatrix}
0 \\ 0
\end{bmatrix}, 
\end{equation}
\begin{equation}
    \label{eq:R1_special}
    R_1 = \begin{bmatrix}
    r_1^{1/2} \Big( -1+  \sqrt{1 + \eta_{1,1}^2} \Big) & 0 \\
    0 & -\frac{r_2^{1/2} \left( -1+  \sqrt{1 + \eta_{2,2}^2} \right)}{2\left( 1 + r_2^{1/2} \left( -1+  \sqrt{1 + \eta_{2,2}^2} \right) \right)}
    \end{bmatrix},
\end{equation}
and
\begin{equation}
    \label{eq:R2_special}
    R_2 = \begin{bmatrix}
    -\frac{r_1^{1/2} \left( -1+  \sqrt{1 + \eta_{1,1}^2} \right)}{2\left( 1 + r_1^{1/2} \left( -1+  \sqrt{1 + \eta_{1,1}^2} \right) \right)} & 0 \\
    0 & r_2^{1/2} \Big( -1+  \sqrt{1 + \eta_{2,2}^2} \Big)
    \end{bmatrix}.
\end{equation}
Accordingly, the feedback form of the Nash equilibrium $\alpha^*$ is given by
\begin{equation*}
    \begin{cases}
        \bar{\alpha}_1^*(\mu, x) 
         = - \frac{1}{\sqrt{r_1}} \left(  x_1 + \left( -1 + \sqrt{1 + \eta_{1,1}^2} \right) [\mu]_{e_1} \right), \\ 
        \bar{\alpha}_2^*(\mu, x) 
        = - \frac{1}{\sqrt{r_2}} \left(  x_2 + \left( -1 + \sqrt{1 + \eta_{2,2}^2} \right) [\mu]_{e_2} \right). 
    \end{cases}
\end{equation*}
Consequently, the ergodic constants $(\hat c_1, \hat c_2)$ are given by
\begin{equation}
    \label{eq:c_i}
    \begin{cases}
        \hat c_1 = \tr(Q_1) = \sqrt{r_1} + \frac{\sqrt{r_2}}{2}, \\
        \hat c_2 = \tr(Q_2) = \frac{\sqrt{r_1}}{2} + \sqrt{r_2}.
    \end{cases}
\end{equation}
Finally, the value function $V_i$ in \eqref{eq:V_ex2} is given by
\begin{equation}
    \label{eq:V_i_ex2}
    V_i(\mu) = [\mu]_{Q_i} + [\mu]_{\mathbf 1}^{\top} R_i [\mu]_{\mathbf 1} - \frac{r_i}{2} 
    - \frac{r_{3-i}}{4}, \quad i = 1, 2,
\end{equation}
with $Q_i$ and $R_i$ given in \eqref{eq:Q1Q2}, \eqref{eq:R1_special} and \eqref{eq:R2_special}.
\end{proposition}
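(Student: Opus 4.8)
The plan is to prove Proposition \ref{prop:LQ_ex2_example} by \emph{direct verification} of the explicit data against the Riccati system \eqref{eq:Riccati}, followed by an appeal to Proposition \ref{prop:LQ_ex2}, whose conclusions (i)--(iii) then deliver the Nash equilibrium, the ergodic constants \eqref{eq:c_i}, and the value functions \eqref{eq:V_i_ex2}. The structural assumption \eqref{eq:params_special} is what makes an explicit solution possible: since $\eta_1=(\eta_{1,1},0)^\top$ and $\eta_2=(0,\eta_{2,2})^\top$, we have $\eta_1^{\otimes 2}=\diag(\eta_{1,1}^2,0)$ and $\eta_2^{\otimes 2}=\diag(0,\eta_{2,2}^2)$, and once we posit diagonal matrices $Q_1,Q_2,R_1,R_2$ with $q_1=q_2=0$, every outer product $\overline{(\cdot\,e_i)\otimes(\cdot\,e_j)}$ occurring in \eqref{eq:Riccati} is diagonal. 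Hence each matrix equation in \eqref{eq:Riccati} splits into two decoupled scalar equations, read off at the $(1,1)$ and $(2,2)$ positions.

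First I would dispose of the algebraically trivial parts. Because $q_1=q_2=0$, the two $q$-equations (lines five and six) hold identically, and the last two lines reduce to $c_i=\tr(Q_i)$, which is exactly \eqref{eq:c_i}. Next I solve the two $Q$-equations, which do not involve $R_1,R_2$. Writing $Q_1=\diag(a_1,b_1)$ and $Q_2=\diag(a_2,b_2)$, their $(1,1)$ and $(2,2)$ entries give $a_1^2=r_1$, $b_1b_2=r_2/2$, $a_1a_2=r_1/2$, and $b_2^2=r_2$; selecting positive roots yields $Q_1,Q_2$ as in \eqref{eq:Q1Q2}. This reproduces the $Q$-block of Proposition \ref{prop:LQ_ex1}, a convenient consistency check.

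The substantive computation is the $R$-block. Setting $R_1=\diag(p_1,s_1)$ and $R_2=\diag(p_2,s_2)$, the diagonal structure organizes the two $R$-equations into a \emph{triangular} system. The $(1,1)$ entry of the third line is a scalar quadratic in $p_1$ alone, $p_1^2+2a_1p_1-r_1\eta_{1,1}^2=0$, whose positive root $p_1=\sqrt{r_1}\,(-1+\sqrt{1+\eta_{1,1}^2})$ is the $(1,1)$ entry of $R_1$ in \eqref{eq:R1_special}; symmetrically, the $(2,2)$ entry of the fourth line fixes $s_2=\sqrt{r_2}\,(-1+\sqrt{1+\eta_{2,2}^2})$. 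With $p_1,s_2$ known, the remaining two scalar relations are \emph{linear}: the $(2,2)$ entry of the third line reads $b_1s_2+b_2s_1+s_1s_2=0$ and the $(1,1)$ entry of the fourth line reads $a_2p_1+a_1p_2+p_1p_2=0$, which determine $s_1$ and $p_2$ uniquely, completing $R_1,R_2$. I expect this bookkeeping to be the main obstacle, since the principal source of error is correctly attributing each symmetrized outer product $\overline{(\cdot)\otimes(\cdot)}$ to its diagonal entry and confirming that the resulting system truly decouples as claimed.

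Finally I would verify the ergodicity hypothesis of Proposition \ref{prop:LQ_ex2}, and here I would bypass the conservative sufficient condition \eqref{eq:suff-cond-prop3} in favour of a direct argument, as anticipated in the remark following that proposition. Writing $\delta_i:=-1+\sqrt{1+\eta_{i,i}^2}>0$, the feedback $\bar{\alpha}^*$ makes the closed-loop state equation decouple across coordinates into two scalar linear McKean--Vlasov (Ornstein--Uhlenbeck) equations $dX_{i,t}^*=-r_i^{-1/2}\big(X_{i,t}^*+\delta_i\E[X_{i,t}^*]\big)dt+dW_{i,t}$. Taking expectations shows $\E[X_{i,t}^*]$ decays to $0$ at rate $r_i^{-1/2}(1+\delta_i)>0$, after which the centred dynamics are standard Ornstein--Uhlenbeck with variance converging to $\sqrt{r_i}/2$; hence $\mathcal L(X_t^*)\to N(0,\Sigma)$ in $\mathbb{W}_2$, where $\Sigma=\diag(\sqrt{r_1}/2,\sqrt{r_2}/2)$ is the unique invariant measure and has full support $\R^2$. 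With all hypotheses of Proposition \ref{prop:LQ_ex2} met, its conclusions yield \eqref{eq:c_i} and, via \eqref{eq:V_ex2}, the value functions; the constant shift is computed from $v_i(\mu_\infty^*)=\tr(Q_i\Sigma)=\frac{r_i}{2}+\frac{r_{3-i}}{4}$, which produces \eqref{eq:V_i_ex2}.
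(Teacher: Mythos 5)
Your strategy coincides with the paper's own proof: reduce \eqref{eq:Riccati} to scalar equations via a diagonal ansatz with $q_1=q_2=0$ (the paper starts from general symmetric $R_i$, sets the off-diagonal entries to zero, and \emph{derives} $q_i=0$ from the fifth and sixth equations, but for an existence claim your shortcut is harmless), select positive roots, establish convergence of the closed-loop mean-field Ornstein--Uhlenbeck dynamics to $N\bigl(0,\diag(\sqrt{r_1}/2,\sqrt{r_2}/2)\bigr)$ directly rather than through \eqref{eq:suff-cond-prop3}, and conclude by verification. One minor repair: \eqref{eq:suff-cond-prop3} is a \emph{hypothesis} of Proposition \ref{prop:LQ_ex2}, so you cannot invoke that proposition's conclusions while discarding it; the paper instead applies Corollary \ref{cor:vt1} directly, and your construction of $\mu_\infty^*$ supplies exactly what the corollary requires, so this is only a matter of citation.

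The genuine gap is the step ``which determine $s_1$ and $p_2$ uniquely, completing $R_1,R_2$.'' You never perform this last substitution, and carrying it out does \emph{not} return \eqref{eq:R1_special}--\eqref{eq:R2_special}. Your linear equation $b_1 s_2 + b_2 s_1 + s_1 s_2 = 0$ (with $b_1=\sqrt{r_2}/2$, $b_2=\sqrt{r_2}$) is derived correctly, but with $s_2=\sqrt{r_2}\,\delta_2$, $\delta_2:=-1+\sqrt{1+\eta_{2,2}^2}$, its unique solution is
\begin{equation*}
 s_1 \;=\; -\frac{\sqrt{r_2}\, s_2}{2\bigl(\sqrt{r_2}+s_2\bigr)} \;=\; -\frac{\sqrt{r_2}\,\delta_2}{2\,(1+\delta_2)} \;=\; -\frac{\sqrt{r_2}\,\delta_2}{2\sqrt{1+\eta_{2,2}^2}},
\end{equation*}
whereas \eqref{eq:R1_special} asserts $s_1=-\sqrt{r_2}\,\delta_2/\bigl(2(1+\sqrt{r_2}\,\delta_2)\bigr)$; the two agree only if $r_2=1$. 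Concretely, for $r_2=4$, $\eta_{2,2}=\sqrt3$ the equation forces $s_1=-1/2$, while \eqref{eq:R1_special} gives $-1/3$. The same discrepancy occurs between your $p_2$ and \eqref{eq:R2_special}. So a verification-style proof of the statement as written cannot close; what your (correct) equations actually reveal is that the denominators in \eqref{eq:R1_special}--\eqref{eq:R2_special} should be $2\sqrt{1+\eta_{i,i}^2}$. The paper's own proof contains the identical slip: in passing from \eqref{eq-52} to \eqref{eq:a_b_d_special}, the relation $b_1=-b_2/\bigl(2(1+b_2/\sqrt{r_2})\bigr)$ (in the paper's notation, where $a_i,b_i$ are entries of $R_i$) is recorded as $b_1=-b_2/(2(1+b_2))$, i.e.\ a factor $r_2^{-1/2}$ is dropped. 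Note the error is confined to $s_1,p_2$: the feedback controls, the closed-loop dynamics, the ergodic constants, and the shift $\tr(Q_i\Sigma)=\frac{r_i}{2}+\frac{r_{3-i}}{4}$ depend only on $p_1,s_2$ and remain valid, but the value functions \eqref{eq:V_i_ex2} are contaminated. To complete your proof you must either correct \eqref{eq:R1_special}--\eqref{eq:R2_special} accordingly, or note that the stated formulas hold only in the special case $r_1=r_2=1$.
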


\begin{proof}
  From the first two equations of \eqref{eq:Riccati}, we
can solve for $Q_1$ and $Q_2$, which yields \eqref{eq:Q1Q2}.
Next, we set
$$R_1 = \begin{bmatrix}
    a_1 & d_1 \\
    d_1 & b_1
\end{bmatrix}, 
\quad R_2 = \begin{bmatrix}
    a_2 & d_2 \\
    d_2 & b_2
\end{bmatrix},$$
for some $a_i, b_i, d_i \in \mathbb R$, $i=1,2$.
Substituting them into the third and fourth equations of 
\eqref{eq:Riccati}, 
we have
\begin{align*}
&\begin{bmatrix}
\eta_{1,1}^2 & \eta_{1,1}\eta_{1,2} \\
\eta_{1,1}\eta_{1,2} & \eta_{1,2}^2
\end{bmatrix} - r_1^{-1/2} \begin{bmatrix}
2a_1 & d_1 \\
d_1 & 0
\end{bmatrix} - 
r_1^{-1} \begin{bmatrix}
a_1^2 & a_1 d_1 \\
d_1 a_1 & d_1^2
\end{bmatrix} \\
&\hspace{1in}
- r_2^{-1/2} \begin{bmatrix}0 & \frac{d_2}{2} \\
\frac{d_2}{2} & b_2
\end{bmatrix} - 
r_2^{-1/2} \begin{bmatrix}
0 & d_1 \\
d_1 & 2 b_1
\end{bmatrix} -
r_2^{-1} \begin{bmatrix}
2 d_1 d_2 & d_1 b_2 + b_1 d_2 \\
d_1 b_2 + b_1 d_2 & 2 b_1 b_2
\end{bmatrix} = 0, 
\end{align*}
and
\begin{align*}
&\begin{bmatrix}
\eta_{2,1}^2 & \eta_{2,1}\eta_{2,2} \\
\eta_{2,1}\eta_{2,2} & \eta_{2,2}^2
\end{bmatrix} - r_2^{-1/2} \begin{bmatrix}
    0 & d_2 \\
    d_2 & 2 b_2
\end{bmatrix} - 
r_2^{-1} \begin{bmatrix}
    d_2^2 & d_2 b_2 \\
    b_2 d_2 & b_2^2
\end{bmatrix} \\ 
&\hspace{1in}
- r_1^{-1/2} \begin{bmatrix} 
    a_1 & \frac{d_1}{2} \\
    \frac{d_1}{2} & 0
\end{bmatrix} 
- r_1^{-1/2} \begin{bmatrix}
    2 a_2 & d_2 \\
    d_2 & 0
\end{bmatrix} -
r_1^{-1} \begin{bmatrix}
2 a_1 a_2 & a_1 d_2 + d_1 a_2 \\
a_1 d_2 + d_1 a_2 & 2 d_1 d_2
\end{bmatrix} = 0.
\end{align*}
Component wise, these yield the following six equations:
\begin{equation}
    \label{eq-52}
\begin{cases}
 \eta_{1,1}^2 = 2 r_1^{-1/2} a_1 + r_1^{-1} a_1^2 + 2 r_2^{-1} d_1 d_2  \\
\eta_{1,2}^2 = r_1^{-1} d_1^2 + r_2^{-1/2} (b_2 + 2 b_1) + 2 r_2^{-1} b_1 b_2  \\
\eta_{1,1} \eta_{1,2} = r_1^{-1/2} d_1 + r_1^{-1} a_1 d_1 + r_2^{-1/2} \left( \frac{d_2}{2} + d_1 \right) + r_2^{-1} (d_1 b_2 + b_1 d_2)  \\
\eta_{2,1}^2 = r_2^{-1} d_2^2 + r_1^{-1/2} (a_1 + 2 a_2) + 2 r_1^{-1} a_1 a_2  \\
\eta_{2,2}^2 = 2 r_2^{-1/2} b_2 + r_2^{-1} b_2^2 + 2 r_1^{-1} d_1 d_2  \\
\eta_{2,1} \eta_{2,2} = r_2^{-1/2} d_2 + r_2^{-1} d_2 b_2 + r_1^{-1/2} \left( \frac{d_1}{2} + d_2 \right) + r_1^{-1} (a_1 d_2 + d_1 a_2). 
\end{cases}
\end{equation}
Next, due to the symmetry of the parameters $\eta$ in \eqref{eq:params_special}, 
we look for a solution with $d_1 = d_2 = 0$, which yields
four different solutions to the system \eqref{eq-52} as follows: 
\begin{equation}
    \label{eq:a_b_d_special}
\begin{cases}
    d_1 = d_2 = 0, \\
    a_1 = r_1^{1/2} \left( -1\pm \sqrt{1 + \eta_{1,1}^2} \right), \quad
    b_2 = r_2^{1/2} \left( -1\pm \sqrt{1 + \eta_{2,2}^2} \right), \\
    a_2 = -\frac{a_1}{2(1+ a_1)}, \quad b_1 = -\frac{b_2}{2(1+ b_2)},
\end{cases}
\end{equation} 
Then 
from the fifth and sixth equations of \eqref{eq:Riccati} and noting the fact that the matrices $Q_i, R_i, i=1,2$ are diagonal, $q_1 = (q_{1,1}, q_{1,2})^\top$ and $q_2 = (q_{2,1}, q_{2,2})^\top$ must satisfy 
\begin{equation}
    \label{eq:q_i}
    \begin{cases}
        - r_1^{-1} q_{1,1} [ \sqrt{r_1} + a_1, 0] 
        - r_2^{-1} q_{2,2} [0, \frac{\sqrt{r_2}}{2} + b_1]
        - r_2^{-1} q_{1,2} [0, \sqrt{r_2} + b_2] = 0, \\
        - r_2^{-1} q_{2,2} [0, \sqrt{r_2} + b_2] 
        - r_1^{-1} q_{1,1} [\frac{\sqrt{r_1}}{2} + a_2, 0]
        - r_1^{-1} q_{2,1} [\sqrt{r_1} + a_1, 0] = 0.
    \end{cases}
\end{equation} 
Note that $\sqrt{r_1} + a_1 \neq 0$ and $\sqrt{r_2} + b_2 \neq 0$. 
Thus, from the above equations, we have $q_{1,1} = q_{2,2} =0$, which, in turn, implies $q_{1,2} = q_{2,1} = 0$. In other words, the only solution to \eqref{eq:q_i} is 
\begin{equation*}
    q_{1} = q_{2} = \begin{bmatrix}
        0 \\ 0
    \end{bmatrix}.
\end{equation*} 
Finally, we can derive the constants $c_1$ and $c_2$ from the last two equations of \eqref{eq:Riccati}:
\begin{equation*}
    \begin{cases}
        c_1 = \tr(Q_1) = \sqrt{r_1} + \frac{\sqrt{r_2}}{2}, \\
        c_2 = \tr(Q_2) = \frac{\sqrt{r_1}}{2} + \sqrt{r_2}.
    \end{cases}
\end{equation*}

Next, we investigate the optimal path and screen out the optimal strategies that  do not 
yield an ergodic optimal path. To this end, 
using optimal feedback controls in \eqref{eq:alpha_ex2}, we can derive the following
optimal strategies:
\begin{equation}
    \label{eq:a_star5}
    \begin{cases}
        \bar{\alpha}_1^*(\mu, x) = \displaystyle -\frac{1}{r_1} \left( \sqrt{r_1} x_1 + a_1 \E[X_1]  \right) = - \frac{1}{r_1} \left( \sqrt{r_1} x_1 + a_1 [\mu]_{e_1} \right), \\ 
        \bar{\alpha}_2^*(\mu, x) = \displaystyle -\frac{1}{r_2} \left( \sqrt{r_2} x_2 + b_2 \E[X_2] \right)= - \frac{1}{r_2} \left( \sqrt{r_2} x_2 + b_2 [\mu]_{e_2} \right). 
    \end{cases}
\end{equation}
Accordingly, the optimal path $X:=X^*$ under the feedback control can be written as
\begin{equation}
    \label{eq:X_star_ex2_pf}
    \begin{cases}
        dX_1(t) = - \frac{1}{r_1} \left( \sqrt{r_1} X_1(t) + a_1 \E[X_1(t)]   \right) dt + dW_1(t), \\
        dX_2(t) = - \frac{1}{r_2} \left( \sqrt{r_2} X_2(t) + b_2 \E[X_2(t)]   \right) dt + dW_2(t).
    \end{cases}
\end{equation}
Denoting $m_i(t) := \E[X_i(t)]$, $i=1,2$, then it follows from \eqref{eq:X_star_ex2_pf} that
\begin{equation*}
    \begin{cases}
        dm_1(t) = - \frac{1}{\sqrt{r_1}} \left(  \pm \sqrt{1 + \eta_{1,1}^2} \right) m_1(t) dt , \\ 
        dm_2(t) = - \frac{1}{\sqrt{r_2}} \left( \pm \sqrt{1 + \eta_{2,2}^2} \right) m_2(t) dt. 
    \end{cases}
\end{equation*} 
Solving these equations, we have\begin{align}\label{eq:m_i}
    m_1(t) = m_1(0) \exp\bigg\{\mp t\sqrt{(1+\eta_{1,1}^2)/r_1 }\bigg\}, \quad
    m_2(t) = m_2(0) \exp\bigg\{\mp t \sqrt{(1+\eta_{2,2}^2)/ {r_2} }\bigg\}.
\end{align}
To ensure that the optimal path $X^*$ is ergodic, we need to choose the parameters such that
the mean process $m_i(t)$, $i=1,2$ converges to a finite constant as $t \to \infty$.
This requires us to select the positive square roots for $a_1$ and $b_2$ in \eqref{eq:a_b_d_special}. 
Therefore, the matrices $R_1, R_2$ must be given as in \eqref{eq:R1_special} and \eqref{eq:R2_special}.

Plugging the expressions of $m_i(t)$ in \eqref{eq:m_i} into \eqref{eq:X_star_ex2_pf}, we can  derive the explicit forms of the optimal paths as follows: \begin{align*}
    X_1(t) = (X_1(0) -m_1(0)) e^{-\frac{t}{\sqrt{r_1}}} + m_1(0) e^{- t \sqrt{(1+\eta_{1,1}^2)/r_1 }} + \int_0^t e^{-\frac{t-s}{\sqrt{r_1}}} dW_1(s), \\
    X_2(t) = (X_2(0) -m_2(0)) e^{-\frac{t}{\sqrt{r_2}}} + m_2(0) e^{- t \sqrt{(1+\eta_{2,2}^2)/r_2 }} + \int_0^t e^{-\frac{t-s}{\sqrt{r_2}}} dW_2(s).
\end{align*}
Then it follows that $\mathcal L(X(t))$ converges to 
 the invariance measure $\pi^*$, where 
\begin{equation}
    \label{eq:pi_star}
    \pi^* = \mathcal N \left(0, \begin{bmatrix}
        \frac{\sqrt{r_1}}{2} & 0 \\
        0 & \frac{\sqrt{r_2}}{2}
    \end{bmatrix} \right),
\end{equation} 
which has its support on entire $\mathbb R^2$. 
This together with the   forms of $\ba^*(\mu, x)$ of 
\eqref{eq:a_star5} imply that $$\mathcal L(X(t), \ba^*(\mathcal L(X(t)), X(t))) \to (\pi^*, \wdt\pi^*) \ \text{ in }\ \mathbb W_2(\R^4),$$ where $\wdt \pi^* = N(0, \wdt \Sigma)$ with $\wdt \Sigma := \diag(\frac{1}{2\sqrt{r_1}}, \frac{1}{2\sqrt{r_2}})$. 
Thus all conditions of Corollary \ref{cor:vt1} are satisfied. 
Therefore $\ba^*(\mu, x)$ of \eqref{eq:a_star5} form a Nash equilibrium, 
the constants $c_1, c_2$ of \eqref{eq:c_i} are the ergodic constants, and the corresponding value functions $V_i, i=1,2$ of \eqref{eq:NE2} can be expressed as
\begin{equation*}
        V_i(\mu) = \mathbb E_\mu \left[ X^\top Q_i X \right] + \E_\mu[X]^\top R_i \E_\mu[X] - 
        \mathbb E_{\pi^*} [ X^\top Q_i X ], \quad i=1,2, 
\end{equation*} 
with $Q_i$, $R_i$, and $\pi^*$ given in \eqref{eq:Q1Q2}, \eqref{eq:R1_special}, \eqref{eq:R2_special}, and \eqref{eq:pi_star}, respectively. 
This yields \eqref{eq:V_i_ex2}.
\end{proof}

\section{Conclusion}
\label{sec:conclusion}
In this paper, we have studied a class of nonzero-sum stochastic differential games with McKean-Vlasov dynamics and ergodic cost criteria.
By analyzing the corresponding system of fully nonlinear infinite dimensional HJB equations, we have established the existence of a Nash equilibrium and characterized the value functions. 
For linear-quadratic games, we have derived explicit solutions to the Riccati system and provided examples with closed-form expressions for the Nash equilibrium strategies and value functions. 
Future research directions include extending the current framework to more general dynamics and cost structures, as well as exploring numerical methods for solving the associated HJB systems. 

Some remarks are in order regarding the insights of explicit solutions of the examples 
in Section \ref{sec:LQG} in connection to the theoretical results in the paper. 
One issue pertains to the connection between the Master equations \eqref{eq:HJB2} 
and the cell problems arising in the homogenization of Hamilton-Jacobi equations; 
see, e.g., \cite{LPV87, Tra21} and the references therein. 
It is standard to show that the cell problem admits a unique constant $c$
such that the cell problem admits a viscosity solution, 
see Theorem 4.2 in \cite{Tra21} for instance.
In that context, one might naturally investigate solution uniqueness through the 
lens of viscosity solutions on Wasserstein space. However, as demonstrated by 
the explicit examples in Section \ref{sec:LQ-ex1}, the uniqueness of 
the constants $c_i$ ($i=1,2$) does not hold, see the equation \eqref{eq:c1c2}. 
Therefore, our verification theorem (Theorem \ref{thm:vt1}) plays a crucial role here: 
it employs ergodic stability conditions to uniquely identify the constants $c_i$ 
that correspond to the ergodic constants $\hat c_i$ in the Nash equilibrium, 
thereby resolving the inherent non-uniqueness in the Master equation framework.

Another interesting behavior cast by the example in Section \ref{sec:LQ-ex2} is that, 
the players are explicitly coupled through the cost function \eqref{eq:J_intro}. 
Interestingly, our analysis reveals that this coupling effect vanishes at the Nash equilibrium; see Proposition \ref{prop:LQ_ex2_example}. 
A possible explanation is that the coupling term is symmetric for both players---akin to the interaction in mean-field games---which may lead to a cancellation effect at equilibrium. 
However, since the game is generally asymmetric (unless $r_1 = r_2$ and $\eta_1 = \eta_2$), this decoupling phenomenon warrants further investigation in future research.

Finally, concerning the sufficient condition \eqref{eq:suff-cond-prop3} in Section \ref{sec:LQ-ex2} 
for the existence of an invariant measure $\mu_\infty^*$, 
we note that exploiting the specific structure of the model, particularly \eqref{eq:alpha_ex2}, 
we can show that $m(t) := \mathbb E[X_t^*]\in \R^2$ satisfies the following ODE:
\begin{align*}
    \frac{d m(t)}{dt} = - \big(  Q +R \big) m(t)  - q,
\end{align*} 
where the matrices $Q, R$ are defined in \eqref{eq:Q-R_defn}  and $q = \frac{e_1^{\otimes 2}  q_1}{2 r_1} + \frac{e_2^{\otimes 2} q_2}{2 r_2}$.
Consequently, if the matrix $Q+R$ is nonsingular with all eigenvalues having positive real parts, then $m(t)$ converges to some limit $m_\infty\in \R^2$ as $t \to \infty$. This condition is weaker than \eqref{eq:suff-cond-prop3}. 
However, establishing the convergence of the distribution of $X_t^*$ in $\mathbb W_2(\R^2)$ requires further analysis.
The example in Section \ref{sec:ex_explicit} demonstrates this approach.
We also note that the existence of an invariant measure $\mu_\infty^*$ can be studied through Lyapunov functions and coupling methods, as in \cite{EberGZ:19}, which we defer to future work.

\bibliographystyle{plain}
\hypersetup{linkcolor=blue,citecolor=blue,urlcolor=blue} 
\def\cprime{$'$}

\end{document}